\newtheorem{theorem}{Theorem}
\newtheorem{corollary}[theorem]{Corollary}
\newtheorem{prop}[theorem]{Proposition}
\newtheorem{lem}[theorem]{Lemma}
\newtheorem{conjecture}{Conjecture}
\newtheorem{question}[conjecture]{Question}
\newtheorem*{Menger}{Menger's Theorem}
\newtheorem*{RS}{Robertson-Seymour Theorem}
\theoremstyle{definition}
\newtheorem*{example}{Example}
\newtheorem*{definition}{Definition}
\newtheorem{remnum}[theorem]{Remark}
\newtheorem{case}{Case}
\newcommand{\startingcases}{\setcounter{case}{0}}
\newcommand{\exitingcases}{\vspace{3 mm}}
\def\mone{_1M}
\def\mtwo{_2M}
\def\cut{\setminus}
\def\contract{/}
\newcommand\otherlist{%
    \addcontentsline{toc}{chapter}{\otherlistname}
    \if@twocolumn
      \@restonecoltrue\onecolumn
    \else
      \@restonecolfalse
    \fi
    \chapter*{\otherlistname
      \@mkboth{\MakeUppercase\otherlistname}%
              {\MakeUppercase\otherlistname}}%
    \@starttoc{lop}%
    \if@restonecol\twocolumn\fi
    }
\newcommand*\l@program{\@dottedtocline{1}{1.5em}{2.3em}}
\newcommand\otherlistname{List of Programs}
\newcommand\programname{Program}
\newcounter{program}[chapter]
\renewcommand\theprogram{\thechapter.\@arabic\c@program}
\def\fps@program{tbp}
\def\ftype@program{3}
\def\ext@program{lop}
\def\fnum@program{\programname~\theprogram}
\newenvironment{program*}
               {\@dblfloat{program}}
               {\end@dblfloat}
\begin{document}
\figurespagetrue
\tablespagetrue
\otherlistpagetrue

%
%


\title{Forbidden Minors for 3-Connected Graphs With No Non-Splitting 5-Configurations}
\author{Iain Crump}
\qualification{B.Sc., University of Winnipeg, 2009.}
\submitdate{Summer 2012}
\copyrightyear{2012}


\chair{Dr.~Marni Mishna,\\
	Associate Professor, Mathematics\\
	Simon Fraser University} 
\signatory{Dr.~Karen Yeats,\\
       Assistant Professor, Mathematics\\
       Simon Fraser University\\ 
        Senior Supervisor}
\signatory{Dr.~Matthew DeVos,\\
       Assistant Professor, Mathematics\\
       Simon Fraser University\\ 
       Committe Member}
\signatory{Dr.~Luis Goddyn,\\
       Professor, Mathematics\\
       Simon Fraser University\\
       SFU Examiner}

\beforepreface

%
%

\prefacesection{Abstract}

For a set of five edges, a graph splits if one of the associated Dodgson polynomials is equal to zero. A graph $G$ splitting for every set of five edges is a minor-closed property. As such there is a finite set of forbidden minors $\mathcal{F}$ such that if a graph $H$ does not contain a minor isomorphic to any graph in $\mathcal{F}$, then $H$ splits. In this paper we prove that if a graph $G$ is simple, 3-connected, and splits, then $G$ must not contain any minors isomorphic to $K_5$, $K_{3,3}$, the octahedron, the cube, or a graph that is a single $\Delta$-Y transformation away from the cube. As such this is the set of all simple 3-connected forbidden minors. The complete set of 2-connected or non-simple forbidden minors remains unresolved, though a number have been found.


\lists


\beforetext

%
%

\chapter{Introduction}
\label{introduction}

\section{Background}

Feynman diagrams arise in physics as a way of understanding complicated interactions between elementary particles and the calculations that arise in computing changes in energy and the probability of these interactions occurring (\cite{fieldtheory}). Predictions using renormalized Feynman diagrams are known to have high precision in perturbative quantum field theory (\cite{hopf}). The integrals used for these calculations quickly become difficult to calculate, but much of the number theoretic content of massless quantum field theory is contained in the residues (\cite{monster}). As such, it is useful to be able to extract the residues from these integrals.

In particular, we may treat Feynman diagrams as graphs. For a Feynman diagram $G$, to each internal edge $e \in E(G)$ we associate a variable $\alpha_e$. Then, the Kirchhoff polynomial for $G$, introduced in \cite{kirchhoff}, is $$\Psi_G = \sum_{\substack{T \text{ a spanning} \\ \text{tree of } G}} \hspace{2mm} \left(\prod_{e \notin E(T)} \alpha_e\right).$$ The residue of the Feynman integral for this Feynman diagram $G$ in massless scalar field theory is $$I_G = \int\limits_0^\infty \cdots \int\limits_0^\infty \frac{\prod_{i=1}^{n-1} \mathrm{d}\alpha_i}{\Psi^2_G} \big|_{\alpha_n = 1}.$$ This integral converges for primitive divergent graphs (\cite{monster}, \cite{c2}).

These integral calculations are difficult and in many cases require deep analytic and numeric methods (\cite{c2}), but it is proven in \cite{monster} that, for a particular class of graphs, the fifth stage of integration produces a recognizable denominator, the five-invariant, which arises from the Kirchhoff polynomial in a natural way. Specifically, we may calculate the five-invariant using modified Kirchhoff polynomials, known as Dodgson polynomials. Like the Kirchhoff polynomial, Dodgson polynomials are linear in each Schwinger coordinate. The five-invariant is the difference of products of two Dodgson polynomials. If the five-invariant, when fully factored, is linear in each factor in at least one edge variable, we may easily calculate the sixth denominator, and even the sixth partial integral. It is of particular interest, then, when one of the Dodgson polynomials used in calculating the five-invariant is equal to zero, as this will guarantee that the five-invariant can be factored into terms linear in all Schwinger coordinates.

Certain graph obstructions, however, prevent using this method to calculate this fifth stage denominator for any choice of five edges. The five-invariant being able to be factored into linear terms for all Schwinger coordinates for any choice of five edges is a minor-closed property, though. As such, there exists a finite set of graph obstructions.

The central goal of this thesis is the characterization of the forbidden minors as they may appear in 3-connected simple graphs. Hence, we show that a 3-connected simple graph that is free of five particular minors must factor as desired. Chapter \ref{introduction} will provide background necessary to understanding the work. We will introduce key concepts related to Dodgson polynomials as denominators of partial Feynman integrals. In particular, we will introduce standard theorems that allow for a graph theoretic approach to this problem.

Chapter \ref{basics} will move towards considering Kirchhoff polynomials purely in graph theoretic terms. By this point, the tools will be in place to prove all theorems using only trees that span two particular minors associated with each Dodgson polynomial for a particular graph. Here, we will introduce our first restrictions on minor-minimal non-splitting graphs, and more general methods for demonstrating that a non-splitting graph is not minor-minimal. We also introduce our first minor-minimal non-splitting graphs.

Chapter \ref{delta-y} introduces the complete set of minor-minimal graph obstructions that have been found so far. In particular, families of forbidden graphs arise by a natural graph operation, the $\Delta$-Y transformation. This operation is explored, as well as its effects on Dodgson polynomials and non-splitting 5-configurations.

Chapter \ref{cuts} considers graph connectivity. In this chapter, we show that cut vertices cannot appear in minor-minimal obstructions. Further, theorems are cited that prove minor-minimal obstructions cannot be 5-connected. Hence, minor-minimal non-splitting graphs can be two-, three-, or four-vertex connected. There are, however, specific restrictions on graphs that are two-vertex connected. Further, there are precisely two four-vertex connected graphs.

In Chapter \ref{trisub}, we restrict ourselves to three-vertex connected simple graphs only. These restrictions arises naturally from our interest in the denominators of Feynman integrals, as graphs with multiple edges or two vertex cuts are trivial in the theory to which our applications apply. In this chapter, we prove that we have constructed a complete set of obstructions for graphs that are three-vertex connected and simple.

Finally, Chapter \ref{last} concludes the thesis. We consider the result in the framework of applications to computing residues of partial Feynman integrals, and put forward a number of conjectures that arise throughout the thesis.

I would like to thank Samson Black for his contribution of well-documented Sage programs that allowed for a large number of enormous calculations that were necessary to complete this thesis. Further, he worked with us though a large number of ideas that eventually led to the solution presented within.

\subsection{Feynman Graphs}

The goal of \emph{quantum field theory} is to understand the behaviour and interactions of \emph{elementary particles} (\cite{fieldtheory}, \cite{quantum}). These are the fundamentally indivisible particles of which the universe is composed. The founders of this field were hoping to find a unified description of the elementary particles and the way in which they interact. Calculations in quantum field theory are used to describe the energy changes associated with these interactions, and probability of their occurrence. The calculations alone, though, are abstract and difficult to visualize.

\emph{Feynman diagrams}, introduced in 1948 by Richard Feynman, offer a visual model to approach the interactions and calculations in quantum field theory. Feynman diagrams, to mathematicians, are multigraphs on a specialized set of edges which may further contain parallel edges, loops, and both directed and undirected edges. They are additionally allowed to contain \emph{external edges}; these are edges incident only with a single vertex used to represent particles entering or exiting the represented system. Edges that are not external are \emph{internal edges}.

For quantum field theorists, a particle can be described by a set of attributes such as position in spacetime, spin, charge, mass, and so on. We treat particles as an indexed list of these attributes. Then, Feynman diagrams construct edges out of two half-edges. Each half edge is associated with a particle, and the joining of two half edges represents a change in at least one attribute of that particle. If a particle exhibits no change to any attribute (and note that this could simply be a change in spacetime position), it is of no interest. As such, vertices in Feynman diagrams are always of degree three or more, and demonstrate the disintegration or combining of particles.

\begin{example} Suppose the indices in the Feynman diagram in Figure \ref{feynmanexample} each contain the collection of attributes that define a particular particle. Then, this diagram represents the probability that a particle with attributes $a$ will disintegrate into particles $b$ and $d$, particles $b$ and $d$ will change to particles with attributes $c$ and $e$, respectively, and these two particles will combine to form particle $f$. \end{example}

\begin{figure}[h]
  \centering
    \includegraphics[scale=1.0]{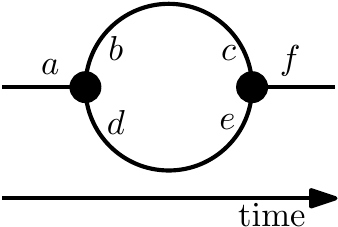}
  \caption{An example of a Feynman diagram.}
\label{feynmanexample}
\end{figure}

In particular, note the external edges associated to particles $a$ and $f$ in Figure \ref{feynmanexample}. We assume that these particles were simply in the system, and as such are only of interest when disintegrating or reforming.

Time axes may be included in diagrams to indicate in which direction one reads the interactions, though they are not necessary. Reading a diagram in another direction simply describes a different process. Counterintuitively, in field theory the probabilities associated with any interaction at a vertex is equal regardless of the direction from which one approaches it. Thus, calculations of probabilities and energies associated to a diagram are the same no matter which way one reads the diagram, and so all such processes can be considered together.

Figure \ref{feynmanreal} describes the disintegration of a photon, $\gamma$, into an electron and positron, $e^-$ and $e^+$, respectively. The electron and positron later reform as a photon. Standard notation from quantum electrodynamics is used, in which a photon is represented by a wavy line, and electrons and positrons are associated to directed edges, electrons associated to edges that move with the flow of time and positrons associated to edges moving against it.

\begin{figure}[h]
  \centering
    \includegraphics[scale=1.0]{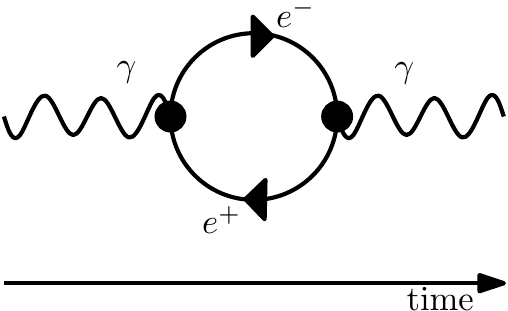}
  \caption{A Feynman diagram describing a real-world particle interaction.}
\label{feynmanreal}
\end{figure}

\subsection{Feynman Integrals}

For an arbitrary Feynman diagram, the \emph{Feynman integral} is used to calculate the probability amplitudes associated with the interactions in the diagram. The following definitions are necessary for our understanding of Feynman integrals.

\begin{definition} Let $G$ be a graph and $H$ a subgraph of $G$. We say that $H$ is a \emph{spanning subgraph} if $V(H) = V(G)$. A \emph{tree} is a connected graph with no cycles. It follows that a \emph{spanning tree} is a spanning subgraph that is a tree. We define $T_G$ to be the set of all spanning trees of $G$. \end{definition}

\begin{definition} Let $G$ be a graph. For each edge $e \in E(G)$, we associate a variable $\alpha_e$, known as the \emph{Schwinger coordinate}. The \emph{Kirchhoff polynomial} for $G$ is defined as \begin{equation} \Psi_G = \sum_{T \in T_G} \left( \prod_{e \notin T} \alpha_e \right). \end{equation} \end{definition}

Whence, for a Feynman diagram $G$, index the internal edges in $E(G)$ numerically for convenience, $1, 2, ..., |E(G)|$. Using notational conventions from \cite{quantum}, the scalar Feynman integral is $$\int\limits_0^\infty \cdots \int\limits_0^\infty  \frac{e^{iQ_G(\alpha)}}{\left( i(4 \pi)^2 \right)^{|E(G)|+1-|V(G)|} \left( \Psi_G \right)^2}  \prod_{k=1}^{|E(G)|} \left( e^{-i \alpha_k m_k^2} \mathrm{d}\alpha_k \right). $$ The $m_k$ values are the masses associated to the particles. The $Q_G(\alpha)$ seen in the numerator is a function over the Schwinger coordinates that is similar to the Kirchhoff polynomial, based instead on edges that are in edge cut sets that produce precisely two connected components and incorporating the external momenta. Note that this is a scalar Feynman integral, which is simpler than the general case. Bluntly, equations of this sort do not attract potential physicists or mathematicians.

There are numerous cases in which the Feynman integral will seem to be equal to infinity, which as a measure of energy is clearly impossible. This is due to particle self-interactions. The concept of \emph{renormalization} was introduced to deal with this. Put simply, this subtracts infinity from the result (following specific rules) to produce finite values when calculating the Feynman integral. The rules, when first introduced, lacked firm mathematical underpinnings, though. Dirac, a central figure in quantum field theory, criticized the seemingly arbitrary method in which inconvenient infinities were dismissed (see \cite{dirac}, page 184). The renormalization Hopf algebra, introduced as an approach to Feynman graphs by Dirk Kreimer (\cite{renorm}, \cite{hopf}), provides a mathematical backing to renormalization.

To simplify the calculation while retaining key number theoretic information we now set the external parameters and masses to zero and remove the overall divergence by setting $\alpha_n =1$ (see \cite{quantum}, pages 294-299, for the calculations involved). In doing so, we simplify the Feynman integral to its residue, $$I_G = \int\limits_0^\infty \cdots \int\limits_0^\infty \frac{\prod_{i=1}^{n-1} \mathrm{d}\alpha_i}{\Psi^2_G} \big|_{\alpha_n = 1},$$ which is much more manageable. This simplification further preserves much of the content of the original integral. This integral converges for primitive divergent graphs. From now on, Feynman integrals will be considered only in this context.

One method for calculating Feynman integrals uses \emph{partial Feynman integrals}. Specifically, we apply an ordering to the internal edges of a Feynman diagram and integrate with regard to the Schwinger coordinate of each edge in order. The $i^\text{th}$ partial Feynman integral of a Feynman diagram $G$ is denoted $I_i$.

\section{Five-Invariants and Dodgsons}
\label{1.1}

We now begin the move to a more graph theoretic approach to this material. Many general graph theory definitions will be assumed as known. Notational conventions will be as found in \cite{graph}. For the purposes of this paper, all graphs are assumed to be undirected multigraphs with loops allowed, unless otherwise stated.

Despite the fact that this work is derived from Feynman graphs, we will no longer consider graphs with external edges. This is because the Feynman integral in the simplified form that we care about does not involve external edges.

On occasion, a portion of the graph may be unimportant, but the relative position it maintains must be specifically noted. We may represent this portion of the graph in these instances with a grey blob showing where the particular part of the graph is positioned, but containing no internal information. An example of this is shown in Figure \ref{blob}. In particular, note that edge $e$ is considered to be a part of the blob. This is not necessary; $e$ could also be left outside the blob. In any case of potential ambiguity, edges of this sort will be considered specifically if necessary.

\begin{figure}[h]
  \centering
    \includegraphics[scale=1.0]{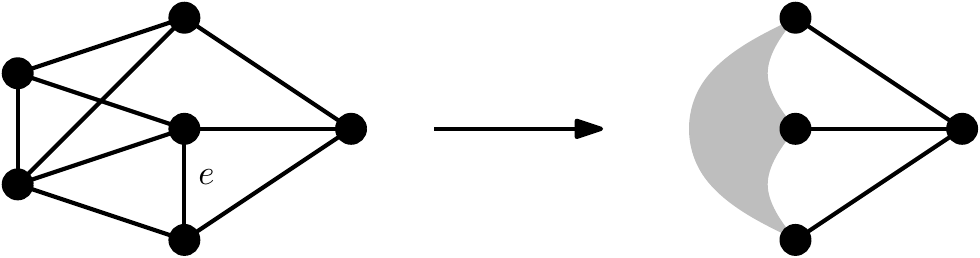}
  \caption{A blob diagram for a particular graph.}
\label{blob}
\end{figure}

Recall from the previous section that the Kirchhoff polynomial is defined to be $\Psi_G = \sum_{T \in T_G} \left( \prod_{e \notin T} \alpha_e \right).$ The following is an example of such a calculation.

\begin{example} Consider the graph in Figure \ref{introexample}, call it $G$. Any spanning tree of $G$ must have precisely three edges. If we are to include edge $e_5$ in the spanning tree, we must include precisely one of $e_1$ or $e_2$ and one of $e_3$ or $e_4$. If edge $e_5$ is not in the spanning tree, we may include any three edges from the cycle induced by edges $e_1, ... , e_4$. Then, if we construct spanning trees as induced by edge sets, \begin{align*} T_G = \{ \{e_1,e_3,e_5 \},&\{e_1,e_4,e_5 \},\{e_2,e_3,e_5 \}, \{e_2,e_4,e_5 \}, \\ &\{e_1,e_2,e_3 \}, \{e_1,e_2,e_4 \},\{e_1,e_3,e_4 \},\{e_2,e_3,e_4 \}  \}. \end{align*} It follows that $$\Psi_G = \alpha_{e_2}\alpha_{e_4} + \alpha_{e_2}\alpha_{e_3} + \alpha_{e_1}\alpha_{e_4} + \alpha_{e_1}\alpha_{e_3} + \alpha_{e_4}\alpha_{e_5} + \alpha_{e_3}\alpha_{e_5} + \alpha_{e_2}\alpha_{e_5} + \alpha_{e_1}\alpha_{e_5}  .$$\end{example}

\begin{figure}[h]
  \centering
    \includegraphics[scale=1.0]{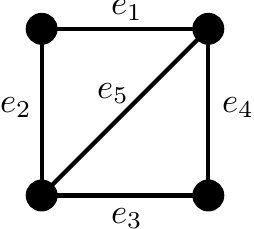}
  \caption{A tiny little graph.}
\label{introexample}
\end{figure}

In the case that the graph $G$ is a tree, we define $\Psi_G = 1$. If $G$ is not connected, then trivially $T_G = \emptyset$ and we say $\Psi_G = 0$. As disconnected graphs are trivial, we will assume that all graphs considered are connected.

For an undirected graph $G$, apply an arbitrary orientation to the edges, and create an incidence matrix for this new directed graph, $\xi_G$. Specifically, $\xi_G$ is a $|E(G)| \times |V(G)|$ matrix such that $$ (\xi_G)_{e,v} =\begin{cases} 1 & \text{if } v \text{ is the source vertex of edge } e, \\ -1 & \text{if } v \text{ is the target vertex of edge } e, \\ 0 & \text{otherwise.} \end{cases} $$ This is the transpose of the more standard definition of the incidence matrix. Note that $\xi_G$ is not well defined, as it depends on the arbitrary orientation chosen for the edges and ordering of the vertices. Let $A$ be the diagonal matrix with entries $\alpha_e$ for $e \in E(G)$. Let $\widetilde{M}_G$ be the block matrix constructed as follows; \begin{equation*} \widetilde{M}_G = \left[\begin{array}{c|c} A & \xi_G \\ \hline -\xi^T_G & \bf{0}\end{array}\right].\end{equation*} The first $|E(G)|$ rows and columns are indexed by the edges of $G$, and the remaining $|V(G)|$ rows and columns are indexed by the set of vertices of G, in some arbitrary order.

Let $\widehat{\xi}_G$ be a submatrix of the incidence matrix $\xi_G$ obtained by deleting an arbitrary column. We define matrix $M_G$ as, \begin{equation*} M_G = \left[\begin{array}{c|c}A & \widehat{\xi}_G \\ \hline -\widehat{\xi}^T_G & \bf{0}\end{array}\right].\end{equation*} For the reasons stated prior this matrix is not well defined, and further column deleted in creating $\widehat{\xi}_G$ was arbitrary.

\begin{example} Applying an arbitrary orientation to the graph in Figure \ref{introexample} we get the graph seen in Figure \ref{introexampledirected}. 

\begin{figure}[h]
  \centering
    \includegraphics[scale=1.0]{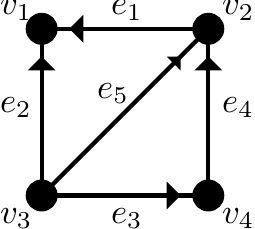}
  \caption{A tiny little directed graph.}
\label{introexampledirected}
\end{figure}

For this graph, one possible incidence matrix is \begin{equation*} \xi_G = \left[\begin{array}{c c c c}
-1 & 1 & 0 & 0 \\
-1 & 0 & 1 & 0 \\
0 & -1 & 0 & 1 \\
0 & 0 & 1 & -1 \\ 
0 & -1 & 1 & 0 \end{array}\right]. \end{equation*} With this incidence matrix, one possible matrix $M_G$ is \begin{equation*} M_G = \left[ \begin{array}{c c c c c|c c c} 
\alpha_{e_1} &  &  &  &  & -1 & 1 & 0 \\
 & \alpha_{e_2} &  &  &  & -1 & 0 & 1 \\
 &  & \alpha_{e_3} &  &  & 0 & -1 & 0 \\
 &  &  & \alpha_{e_4} &  & 0 & 0 & 1 \\
 &  &  &  & \alpha_{e_5} & 0 & -1 & 1 \\
\hline
1 & 1 & 0 & 0 & 0 &  &  &  \\
-1 & 0 & 1 & 0 & -1 &  &  &  \\
0 & -1 & 0 & -1 & -1 &  &  &   \end{array} \right]. \end{equation*} For the sake of clarity, zeroes in the upper left and lower right blocks have been omitted. Note that the column in $\xi_G$ corresponding to vertex $v_4$ was deleted in this construction. \end{example}

The following Lemma is due to Kirchhoff, and appears in \cite{kirchhoff}. It also appears as Lemma 20 in \cite{monster}.

\begin{lem} \label{kirk} Let $G$ be a connected graph. Let $I \subset E(G)$ such that $|I| = |E(G)| - |V(G)| +1$. With the incidence matrix $\widehat{\xi}_G$ as defined previously, create matrix $\widehat{\xi}_{G}(I)$ by deleting every row indexed by edges in $I$. Then, $$\det (\widehat{\xi}_{G}(I)) = \begin{cases} \pm 1, & \mbox{if edge set } E(G) - I\mbox{ induces a spanning tree of } G \\ 0, & \mbox{otherwise} \end{cases}.$$ \end{lem}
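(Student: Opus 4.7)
The plan is to split into the two cases delineated by the statement, after first checking that $\widehat{\xi}_G(I)$ really is square. The matrix $\widehat{\xi}_G$ has $|E(G)|$ rows and $|V(G)|-1$ columns, and we remove $|E(G)|-|V(G)|+1$ rows, leaving an order $|V(G)|-1$ square matrix. In addition, the edge set $E(G) \setminus I$ has size $|V(G)|-1$, so it spans all of $V(G)$ on exactly the right number of edges to be a tree; it is a spanning tree precisely when it is acyclic (equivalently, connected).

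For the zero case, suppose $E(G) \setminus I$ is not a spanning tree. Since it has $|V(G)|-1$ edges but fails to be a tree, it must contain a cycle $C$. Traversing $C$ gives signs $\epsilon_e \in \{\pm 1\}$ for each $e \in E(C)$ so that $\sum_{e \in E(C)} \epsilon_e \cdot (\text{row of } e)$ is the zero vector: at every vertex of the cycle other than the single deleted vertex $v^{*}$, the two adjacent cycle edges contribute opposite signs in that vertex's column, and $v^{*}$ contributes nothing to any column at all. Hence the cycle rows are linearly dependent and $\det \widehat{\xi}_G(I) = 0$.

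For the $\pm 1$ case, I would induct on $|V(G)|$. The base case $|V(G)|=1$ has $\widehat{\xi}_G(I)$ equal to the empty $0 \times 0$ matrix, with determinant $1$. For the inductive step let $T$ be the spanning tree on edges $E(G) \setminus I$. Because $T$ has at least two vertices it has at least two leaves, so we may choose a leaf $u$ different from the deleted vertex $v^{*}$; let $e$ be the unique edge of $T$ incident to $u$. Every other edge of $G$ incident to $u$ lies in $I$ and so does not index a row of $\widehat{\xi}_G(I)$, which means the column of $\widehat{\xi}_G(I)$ labelled by $u$ contains a single nonzero entry, namely $\pm 1$ in the row indexed by $e$. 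Cofactor expansion along this column gives $\det \widehat{\xi}_G(I) = \pm \det M$, where $M$ is obtained by deleting the $u$-column and the $e$-row. I then identify $M$ with $\widehat{\xi}_{G-u}(I')$ for the graph $G - u$ (which remains connected, since $T - e$ is a spanning tree of $G-u$) with $I' = I \setminus \{\text{edges of } G \text{ incident to } u\}$, and note that the edge set $E(G-u) \setminus I' = T - e$ is again a spanning tree. By the inductive hypothesis $\det M = \pm 1$, completing the argument.

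The main obstacle I foresee is the bookkeeping in the inductive step: verifying that the column of $u$ really has only one nonzero entry (this uses crucially that $u \neq v^{*}$ and that all other edges of $G$ at $u$ lie in $I$), that the submatrix $M$ coincides on the nose with the corresponding object $\widehat{\xi}_{G-u}(I')$ for the smaller graph, and that the cardinality condition $|I'| = |E(G-u)| - |V(G-u)| + 1$ indeed holds so the inductive hypothesis applies. The cycle-row dependency in the zero case is also worth spelling out carefully, since one must check that the signs coming from the chosen edge orientations of $G$ can be absorbed into the $\epsilon_e$ so the sum genuinely vanishes in the column indexed by every non-deleted vertex.
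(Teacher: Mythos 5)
The paper does not prove this lemma at all: it is attributed to Kirchhoff and cited as Lemma 20 of the Brown reference, so there is no in-paper argument to compare against. Your proof is a correct and complete version of the standard argument. The singular case is handled properly: a set of $|V(G)|-1$ edges that is not a spanning tree must contain a cycle, and the signed sum of the corresponding rows vanishes in every surviving column (the deleted vertex $v^{*}$ needs no cancellation because its column is gone), giving linear dependence. The nonsingular case by leaf-peeling induction is also sound, and you have correctly flagged the points that need checking: the leaf $u$ can be chosen distinct from $v^{*}$ because a tree on at least two vertices has at least two leaves; the $u$-column of $\widehat{\xi}_G(I)$ has a single nonzero entry because the rows are indexed only by $E(G)\setminus I = E(T)$ and $u$ is a leaf of $T$; and the cardinality bookkeeping $|I'| = |E(G-u)| - |V(G-u)| + 1$ works out since exactly one of the $\deg(u)$ edges at $u$ lies outside $I$. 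The only degenerate case worth a sentence is a loop in $E(G)\setminus I$ (the graphs here are multigraphs): its incidence row is zero, so the determinant vanishes immediately, consistent with your cycle argument read as a cycle of length one.
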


\begin{theorem} \label{determinant} For an arbitrary graph $G$, the Kirchhoff polynomial $\Psi_G = \det(M_G)$. Specifically, the determinant does not depend on the edge orientation of $G$, nor the column deleted from $\xi_G$. \end{theorem}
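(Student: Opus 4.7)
The plan is to compute $\det(M_G)$ by a Schur complement factorization that isolates the diagonal block $A$, and then to expand the remaining smaller determinant using Cauchy--Binet so that Lemma \ref{kirk} can be invoked term by term.

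First I would pass to the field of rational functions $\mathbb{Q}(\alpha_e : e \in E(G))$, in which $A$ is invertible, and verify the block-LDU factorization
\[
\begin{bmatrix} A & \widehat{\xi}_G \\ -\widehat{\xi}_G^T & \mathbf{0} \end{bmatrix}
= \begin{bmatrix} I & 0 \\ -\widehat{\xi}_G^T A^{-1} & I \end{bmatrix}
  \begin{bmatrix} A & 0 \\ 0 & \widehat{\xi}_G^T A^{-1} \widehat{\xi}_G \end{bmatrix}
  \begin{bmatrix} I & A^{-1} \widehat{\xi}_G \\ 0 & I \end{bmatrix},
\]
which gives $\det(M_G) = \det(A) \cdot \det(\widehat{\xi}_G^T A^{-1} \widehat{\xi}_G)$. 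Since both sides of the theorem are polynomials in the $\alpha_e$, an identity verified over $\mathbb{Q}(\alpha_e)$ is automatically an identity in $\mathbb{Z}[\alpha_e]$.

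Next I would apply the Cauchy--Binet formula to the $(|V(G)|-1) \times (|V(G)|-1)$ product $\widehat{\xi}_G^T A^{-1} \widehat{\xi}_G$, summing over subsets $S \subseteq E(G)$ with $|S| = |V(G)|-1$. Because $A^{-1}$ is diagonal with entries $\alpha_e^{-1}$, each summand factors cleanly as $\bigl(\prod_{e \in S} \alpha_e^{-1}\bigr) \cdot \det(\widehat{\xi}_G|_{S,\bullet})^2$, where $\widehat{\xi}_G|_{S,\bullet}$ is the square submatrix of rows indexed by $S$. Setting $I = E(G) \setminus S$ puts this in the form of Lemma \ref{kirk}, which tells us the squared minor is $1$ when $S$ is the edge set of a spanning tree of $G$ and $0$ otherwise. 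Multiplying through by $\det(A) = \prod_{e \in E(G)} \alpha_e$ then collapses the sum to
\[
\det(M_G) = \sum_{T \in T_G} \prod_{e \notin E(T)} \alpha_e = \Psi_G.
\]

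The ``specifically'' clause comes for free: since $\Psi_G$ is manifestly independent of the edge orientation and of the deleted column, $\det(M_G)$ must be as well. The step I expect to require the most care is the Cauchy--Binet bookkeeping, in particular checking that the transpose on the left together with the diagonal $A^{-1}$ on the right produce exactly a \emph{square} of $\det(\widehat{\xi}_G|_{S,\bullet})$ rather than a signed product of two genuinely different minors; this squaring is precisely what makes the $\pm 1$ sign ambiguity in Lemma \ref{kirk} irrelevant, so no separate sign analysis will be needed.
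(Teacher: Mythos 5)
Your proposal is correct and follows essentially the same route as the paper's own proof: a Schur-complement factorization giving $\det(M_G) = \det(A)\det(\widehat{\xi}_G^T A^{-1}\widehat{\xi}_G)$, followed by Cauchy--Binet and Lemma \ref{kirk} to identify the surviving terms with spanning trees. Your explicit block-LDU verification and the remark that the squared minor neutralizes the $\pm 1$ ambiguity are just slightly more careful renderings of steps the paper leaves implicit.
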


\begin{proof} Let $|E(G)| = N$ and $|V(G)|=v$. Note that $A$ is a diagonal square matrix with Schwinger coordinates as diagonal entries. It follows that $A$ is invertible. Using a modified version of the Schur complement, \begin{align*} \det(M_G) &= \alpha_1 \cdots \alpha_N \det( \mathbf{0} - (-\widehat{\xi}_G^T A^{-1}\widehat{\xi}_G) ) \\ &= \alpha_1 \cdots \alpha_N \det( \widehat{\xi}_G^T A^{-1}\widehat{\xi}_G ) \end{align*}

For an $m \times n$ matrix $A$ and set $S$, let $A_{m,S}$ be the matrix created by deleting all columns of $A$ indexed by numbers not in $S$ and let $A_{S,n}$ be the matrix created by deleting all the rows of $A$ indexed by numbers not in $S$. By the Cauchy-Binet formula and indexing rows and columns of our matrix by the edges of $G$, \begin{align*} \det(\widehat{\xi}_G^T A^{-1}\widehat{\xi}_G) &= \sum_{\substack{|S|=v-1 \\ S \subset E(G)}} \det((\widehat{\xi}_G^T)_{v-1,S}) \det((A^{-1}\widehat{\xi}_G)_{S,v-1}) \\ &= \sum_{\substack{|S|=v-1 \\ S \subset E(G)}} \left( \prod_{e \in S} \frac{1}{\alpha_e} \right) (\det(\widehat{\xi}_G)_{S,v-1})^2. \end{align*} By Lemma \ref{kirk}, the determinant $\det(\widehat{\xi}_G)_{S,v-1}$ is equal to $\pm 1$ (and hence the square is equal to one) if and only if $S$ is a spanning tree of our graph $G$. Hence, $\alpha_1 \cdots \alpha_N \det( \widehat{\xi}_G^T A^{-1}\widehat{\xi}_G )$ is the sum over all spanning trees of the product of edges not in the tree. \end{proof}

\begin{definition} For an edge $e \in E(G)$, we define the \emph{edge deletion}, $G \cut e$, to be the graph created by removing edge $e$. We define the \emph{edge contraction}, $G \contract e$, to be the graph created by removing edge $e$ and identifying the ends of $e$. For a set $S \subseteq E(G)$, define $G \cut S$ to be the graph created by deleting all edges $e \in S$, and similarly $G \contract S$ to be the graph created by contracting all edges $e \in S$. From a graphical perspective, contracting a loop is the same as deleting a loop, though the operations must be treated seperately for our purposes. \end{definition}

The following proposition follows logically.

\begin{prop} \label{smallcutcontract} For a graph $G$ and $e \in E(G)$, $\Psi_G = \alpha_e \Psi_{G \cut e} + \Psi_{G \contract e}$. \end{prop}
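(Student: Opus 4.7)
The plan is a direct combinatorial deletion--contraction argument: partition the spanning trees of $G$ by whether or not they contain $e$, and identify each class with the spanning trees of one of the two smaller graphs. Let $T_G^- = \{T \in T_G : e \notin T\}$ and $T_G^+ = \{T \in T_G : e \in T\}$, so that $\Psi_G$ splits as a sum of two partial sums over these sets.

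First I would handle $T_G^-$. A spanning subgraph of $G$ avoiding $e$ is the same as a spanning subgraph of $G \setminus e$, and it is a tree in the one iff it is a tree in the other (connectivity and acyclicity are witnessed by the same edge set). This gives a natural bijection $T_G^- \leftrightarrow T_{G \setminus e}$. For $T \in T_G^-$ the complement of $T$ in $E(G)$ is the disjoint union of $\{e\}$ with the complement of $T$ in $E(G \setminus e)$, so factoring out $\alpha_e$ gives
\[
\sum_{T \in T_G^-} \prod_{f \notin T} \alpha_f \;=\; \alpha_e \sum_{T' \in T_{G \setminus e}} \prod_{f \notin T'} \alpha_f \;=\; \alpha_e \Psi_{G \setminus e}.
\]

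Next I would handle $T_G^+$ via the map $T \mapsto T/e$. If $T$ is a spanning tree of $G$ containing $e$, then $T/e$ is a connected, acyclic spanning subgraph of $G/e$, hence a spanning tree of $G/e$; conversely every spanning tree of $G/e$ lifts uniquely to a spanning tree of $G$ through $e$, so the map is a bijection. Crucially, as subsets of $E(G) \setminus \{e\} = E(G/e)$, the edge sets $E(G) \setminus T$ and $E(G/e) \setminus (T/e)$ are literally equal, so
\[
\sum_{T \in T_G^+} \prod_{f \notin T} \alpha_f \;=\; \sum_{T' \in T_{G/e}} \prod_{f \notin T'} \alpha_f \;=\; \Psi_{G/e}.
\]
Adding the two identities gives the formula.

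There is no substantive obstacle; the only thing to watch is the degenerate cases flagged by the paper's remark that loops in contraction are treated separately. If $e$ is a loop, then $T_G^+ = \emptyset$ and the identity forces the convention $\Psi_{G/e} = 0$, which is the separate treatment alluded to in the definition. If $e$ is a bridge, then $G \setminus e$ is disconnected, so $\Psi_{G \setminus e} = 0$ by the convention stated earlier, and $T_G^- = \emptyset$, so the identity reduces correctly to $\Psi_G = \Psi_{G/e}$. With those conventions acknowledged, the partition argument above completes the proof.
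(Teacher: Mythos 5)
Your proof is correct and matches the route the paper itself indicates: the paper omits the argument but notes it follows from Theorem \ref{cutablecontractablegeneral}, which is exactly the pair of bijections (trees avoiding $e$ with trees of $G \cut e$, trees containing $e$ with trees of $G \contract e$) that you construct. Your handling of the loop and bridge degeneracies is a welcome addition consistent with the paper's conventions.
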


The proof of this proposition is reasonably straightforward, but omitted here for the sake of brevity. It does follow immediately from Theorem \ref{cutablecontractablegeneral} in Chapter \ref{basics} and can also be found as Lemma 14 in \cite{monster}.

\begin{definition} Let $I,J,K \subseteq E(G)$. We further define the matrix $M_G(I,J)_K$ by deleting rows indexed by the edges in $I$, columns indexed by the edges in $J$, and setting $\alpha_e = 0$ for all $e \in K$ in $M_G$. Forcing $|I| = |J|$, we may define $\Psi^{I,J}_K = \det(M_G(I,J)_K)$, and this definition is well-defined up to sign, the result of the orientation on $G$ in creating a digraph and the row and column deleted in our original matrix being arbitrary. We call $\Psi^{I,J}_K$ the \emph{Dodgson polynomial} corresponding to edge sets $I$, $J$, and $K$. If $K = \emptyset$, we write this as $\Psi^{I,J}$. \end{definition}

\begin{prop} \label{calcdodgson} For a graph $G$ and $I,J,K \subseteq E(G)$ such that $|I| = |J|$, the Dodgson polynomial $$\Psi^{I,J}_K = \sum_{T \subseteq E(G)} \left( \pm \prod_{e \notin T \cup S} \alpha_e \right)$$ where $S = I \cup J \cup K$ and the sum is over all edge sets which induce spanning trees in both graph minors $G \cut I \contract ((J - (I \cap J)) \cup (K - (I \cap K)))$ and $G \cut J \contract ((I - ( J \cap I)) \cup (K- (J \cap K)))$. \end{prop}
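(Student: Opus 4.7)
The plan is to compute $\det(M_G(I,J)_K)$ by direct Leibniz expansion and identify its non-vanishing terms with the claimed combinatorial sum. This parallels the proof of Theorem \ref{determinant}, but because rows and columns of the $A$-block are removed, the Schur complement is no longer available; we instead dissect the permutation sum directly.

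First I would write $\det(M_G(I,J)_K) = \sum_\sigma \mathrm{sgn}(\sigma)\prod_i (M_G(I,J)_K)_{i,\sigma(i)}$ over bijections $\sigma$ from row indices $(E-I) \cup V'$ to column indices $(E-J) \cup V'$, where $V'$ denotes the vertex set minus the vertex whose column was dropped in forming $\widehat{\xi}_G$. Because the bottom-right block is zero, any non-vanishing term requires each vertex row $v \in V'$ to match some edge column $\phi(v) \in E - J$ incident to $v$, and each vertex column $w \in V'$ to be matched by some edge row $\psi(w) \in E - I$ incident to $w$. The remaining edge rows in $(E - I) - \psi(V')$ have no vertex targets available, so they must match diagonally to the edge column of the same index; this forces the index to lie in $E - J$ and the corresponding $\alpha_e$ to be nonzero, i.e., $e \notin K$. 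Denoting by $T$ this diagonally-matched subset, one has $T \subseteq E - (I \cup J \cup K) = E - S$ and the permutation contributes a factor $\prod_{e \in T} \alpha_e$ times a $\pm 1$ sign from $\mathrm{sgn}(\sigma)$, the incidence entries, and the $-\widehat{\xi}^T$ block.

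Next, for fixed $T$, I would sum over all compatible bijections: $\phi$ runs over incidence-respecting bijections $V' \to \widetilde{S}_1 := (E - J) - T$, and $\psi$ runs over incidence-respecting bijections $V' \to \widetilde{S}_2 := (E - I) - T$. Lemma \ref{kirk} identifies each of these sums with $\pm 1$ when $\widetilde{S}_1$ (respectively $\widetilde{S}_2$) induces a spanning tree of $G \cut J$ (respectively $G \cut I$), and $0$ otherwise. Setting $T' := (E - S) - T$, the prefactor rewrites as $\prod_{e \in T} \alpha_e = \prod_{e \notin T' \cup S} \alpha_e$, matching the stated formula; and the disjoint decomposition $E - J = (E - S) \sqcup (I - J) \sqcup (K - J)$ gives $\widetilde{S}_1 = T' \cup (I - J) \cup (K - J)$. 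The standard bijection between spanning trees of an ambient graph $H$ containing a fixed forest $X$ and spanning trees of $H \contract X$ then converts ``$\widetilde{S}_1$ is a spanning tree of $G \cut J$'' into ``$T'$ is a spanning tree of $G \cut J \contract ((I - J) \cup (K - J))$'', with the analogous translation on the $\widetilde{S}_2$ side.

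The main obstacle is the case where the contracted edge set $(I - J) \cup (K - J)$ (or $(J - I) \cup (K - I)$) fails to be a forest in the corresponding deletion minor. In that case, $\widetilde{S}_1$ necessarily contains the offending cycle and cannot be a spanning tree of $G \cut J$, so the Leibniz expansion contributes nothing; on the other side, the proposition's sum is restricted to $T'$ whose union with the contracted set is a genuine spanning tree of the ambient graph, so both sides agree by being empty. Settling this case and reconciling it with the bare ``spanning tree of the minor'' phrasing is the main technical subtlety. The overall $\pm$ in the formula then collects contributions from $\mathrm{sgn}(\sigma)$, the two incidence-matrix factors, and the $-\widehat{\xi}^T$ block, and is well-defined up to the overall sign ambiguity already inherent in $\Psi^{I,J}_K$.
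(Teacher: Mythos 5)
Your proposal is correct and follows essentially the same route as the paper: group the determinant expansion by the set of diagonally-used $A$-block entries, factor the remaining block anti-diagonal determinant into two incidence-matrix determinants, and invoke Lemma \ref{kirk} to identify the non-vanishing terms with common spanning trees of the two minors. The only difference is cosmetic -- the paper first passes to $G' = G \cut (I\cap J) \contract K$ to assume $I\cap J = K = \emptyset$ before expanding, whereas you absorb the roles of $K$ and $I\cap J$ directly into the Leibniz bookkeeping and the forest-contraction correspondence.
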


\begin{proof} Let $G' = G \cut (I \cap J) \contract K$. Using this minor, we may assume that $I \cap J = K = \emptyset$. As in Theorem \ref{determinant}, \begin{align*} \det(M_{G'}(I,J)) &= \sum_{U \subseteq G' \cut (I \cup J)} \prod_{u \notin U} \alpha_u \det \left( \begin{array}{c|c}
\bf{0} & \xi_{G'}(U \cup I) \\ \hline -\xi^T_{G'} (U \cup J) & \bf{0} \end{array}\right) \\ &= \sum_{U \subseteq G' \cut (I \cup J)} \prod_{u \notin U} \alpha_u \det( \xi_{G'} (U \cup I) ) \det (\xi_{G'} (U \cup J) ) .  \end{align*} It follows from Lemma \ref{kirk} that for both $\det(\xi_{G'}(U \cup I))$ and $\det(\xi_{G'}(U \cup J))$ to be non-zero, $U \cup I$ and $U \cup J$ must induce spanning trees of $G'$. By passing to the minor $G'$, we have assumed that $I \cap J = \emptyset$, and as such $U \cup I$ has no edges in $J$. It follows that $U$ is a spanning tree in $G' \cut I \contract J$, and similarly $U$ is a spanning tree in $G' \cut J \contract I$. For any such edge set $U$, Lemma \ref{kirk} implies that these determinants are equal to $\pm 1$, and as such their product is $\pm 1$. \end{proof}

It is important to note that there can be no cancellation between terms when calculating Dodgson polynomials using this method. Specifically,  cancellation could only occur if the same edge set that produces spanning trees in both minors was considered twice.

\begin{example} Consider the graph $G$ in Figure \ref{intdodgson}. In calculating Dodgson $\Psi^{e_5,e_4}$ using the method in Proposition \ref{calcdodgson}, we look for edge sets that induce spanning trees in both $G \cut e_5 \contract e_5$ and $G \cut e_4 \contract e_5$, both minors also shown in this figure. 

Edge sets $\{e_1,e_3\}$ and $\{e_2,e_3\}$ form the only common spanning trees. Therefore, $\Psi^{e_5,e_4} = \pm \alpha_{e_2} \pm \alpha_{e_1}$.

\begin{figure}[h]
  \centering
    \includegraphics[scale=1.0]{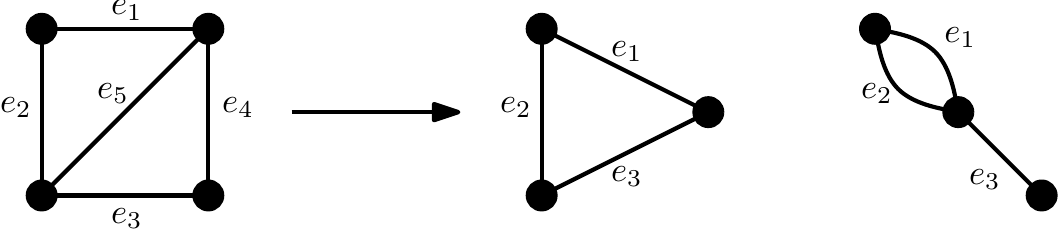}
  \caption{A graph and the minors used in calculating a Dodgson polynomial.}
\label{intdodgson}
\end{figure}

 \end{example}

\begin{remnum} \label{alwayscut} Let $G$ be a graph. Assume that $I \cup J$ and $K$ are disjoint. Any edge $e \in K$ is contracted in both of the minors of $G$ used in the method for calculating $\Psi_K^{I,J}$ given in Proposition \ref{calcdodgson}. Further, any edge $e \in I \cap J$ will be deleted in both minors. Our particular interests lie in Dodgson polynomials such that one of $|K|$ or $|I \cap J|$ is equal to one, the other zero. As such, there will always be precisely one edge that is deleted or contracted in both minors related to a particular Dodgson. \end{remnum}

\begin{definition} For a graph $G$, a \emph{5-configuration} is a set $S \subseteq E(G)$ such that $|S| = 5$. \end{definition}

The Dodgsons we concern ourselves with arise from 5-configurations. Specifically, $|I \cup J \cup K| =5$, and we are interested in edge sets such that $|I \cap K| = |J \cap K| = 0$, and either either $|I| = |J| = 2$, $|K| = 1$, and $|I \cap J| = 0$ or $|I| = |J| = 3$, $|K| = 0$, and $|I \cap J| = 1$. Trivially, $\Psi^{I,J}_K =\Psi^{J,I}_K$. As such, for a fixed 5-configuration, there are thirty generically distinct Dodgsons of these forms associated with that set of five edges to be considered. For notational convenience we will write $\Psi^{e_1e_2,e_3e_4}_{e_5}$ in place of $\Psi^{ \{e_1,e_2\}, \{e_3,e_4\}}_{\{e_5\}}$, as an example.

\begin{definition} Let $G$ be a graph and fix a 5-configuration $S \subseteq E(G)$. Define $D^G_S$ to be the set of the thirty distinct Dodgsons associated with the 5-configuration $S$. If the graph $G$ is clear from context, we will write this as $D_S$. \end{definition}

For a graph $G$ and a 5-configuration $S$, let $\mone$ and $\mtwo$ be the two minors created as in Proposition \ref{calcdodgson} in calculating a fixed Dodgson $D \in D_S$. Note that the order of the indices is arbitrary. Treating spanning trees as edge sets, we will say that a tree spans both $\mone$ and $\mtwo$ if the edge set creates a tree in both minors. In instances where we need to specify how edges are deleted or contracted in a particular minor, say $\mone$, we will write $\mone^{\cut A}_{\contract B}$ for edge sets $A,B \subseteq S$. Note that we may not include all the edges in $S$, only the edges of particular interest.

\begin{example} Fix an arbitrary graph $G$ and edge set $S = \{e_1,...,e_5\} \subseteq E(G)$. In calculating the Dodgson $\Psi^{e_1e_2,e_3e_4}_{e_5}$, we take two minors $\mone^{\cut e_1e_2}_{\contract e_3e_4e_5}$ and $\mtwo^{\cut e_3e_4}_{\contract e_1e_2e_5}$. If we need focus only on edges $e_1$ and $e_5$, we may instead write these as $\mone^{\cut e_1}_{\contract e_5}$ and $\mtwo_{\contract  e_1e_5}$. \end{example}

\begin{definition} Let $G$ be a graph, $S$ a 5-configuration, and $D \in D_S$ with minors $\mone$ and $\mtwo$. For $D$, let $T_D$ be the set of all edge sets $E\subseteq E(G) -S$ such that $E$ induces spanning trees of both minors $\mone$ and $\mtwo$ associated with Dodgson $D$. We will refer to these edge sets as trees when dealing with minors $\mone$ and $\mtwo$, as their key property is inducing spanning trees in both $\mone$ and $\mtwo$. \end{definition}

By Proposition \ref{calcdodgson}, for 5-configuration $S$ and $D \in D_S$, $$D = \sum_{T \in T_D} \left( \pm \prod_{e \notin T} \alpha_e \right).$$ With this and the definition of the Dodgson, we have two distinct methods of determining if a Dodgson is equal to zero. Our proofs will always rely on spanning trees. Specifically, we are interested in cases in which the Dodgson is equal to zero, and hence in a Dodgsons $D$ such that there is no $T \in T_D$. As such, the ambiguity in sign in each term using spanning trees is not an issue. There are calculations performed in this thesis that produce non-zero Dodgsons. These were all calculated using the matrix method, specifically using Sage code produced by Samson Black.

\begin{definition} Let $G$ be a graph and fix five edges $S = e_1, ..., e_5 \in E(G)$. As in the definition of a Dodgson, fix a matrix $M_G$ to use in the calculation of all Dodgsons $D \in D_S$. The \emph{five-invariant} is the polynomial $$^5\Psi(e_1,e_2,e_3,e_4,e_5) = \pm (\Psi^{e_1e_2,e_3e_4}_{e_5} \Psi^{e_1e_3e_5,e_2e_4e_5} - \Psi^{e_1e_3,e_2e_4}_{e_5} \Psi^{e_1e_2e_5,e_3e_4e_5}).$$ \end{definition}

The 5-invariant was first implicitly found in \cite{polys}, equation (8.13).

The following is Lemma 87 in \cite{monster}.

\begin{lem} \label{reindex} Reordering the edges in a five-invariant may at most change the sign of the polynomial. \end{lem}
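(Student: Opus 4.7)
The plan is to reduce the claim to verifying invariance (up to sign) under the generators of $S_5$, and then to handle each generator via the Dodgson/Desnanot--Jacobi determinantal identity applied to minors of $M_G$.

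First, since $S_5$ is generated by the four adjacent transpositions $\tau_i = (i,\,i{+}1)$ for $i = 1,2,3,4$, it suffices to show that each $\tau_i$ changes ${}^5\Psi(e_1,\ldots,e_5)$ by at most a sign; any other permutation is a product of such transpositions and the signs compose. This reduction separates the problem into four cases, two of a ``within-pair'' flavour and two of a ``cross-pair'' flavour.

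For the within-pair transpositions $\tau_1 = (e_1 \leftrightarrow e_2)$ and $\tau_3 = (e_3 \leftrightarrow e_4)$, the factors $\Psi^{e_1e_2,e_3e_4}_{e_5}$ and $\Psi^{e_1e_2e_5,e_3e_4e_5}$ are unchanged because $\{e_1,e_2\}$ and $\{e_3,e_4\}$ appear as sets. The other two factors get their upper and lower indices shuffled; applying the trivial symmetry $\Psi^{I,J}_K = \Psi^{J,I}_K$, the new expression becomes one built from a \emph{different} $3+3$ partition of the four edges $e_1,\ldots,e_4$. The required identification with the original, up to sign, is then a Pl\"ucker-type identity among Dodgson polynomials which follows from the Desnanot--Jacobi identity
\begin{equation*}
\det(N)\det(N^{pq,rs}) \;=\; \det(N^{p,r})\det(N^{q,s}) \;-\; \det(N^{p,s})\det(N^{q,r})
\end{equation*}
applied to an appropriate submatrix $N$ of $M_G$.

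For the cross-pair transpositions $\tau_2 = (e_2 \leftrightarrow e_3)$ and $\tau_4 = (e_4 \leftrightarrow e_5)$, all four Dodgson factors in ${}^5\Psi$ change nontrivially. Here I would recognise both the original and the permuted form of ${}^5\Psi$ as two realisations of the Desnanot--Jacobi identity applied to minors of $M_G$, with different choices of which edges label the rows and columns removed. Since the two realisations share the same left-hand side $\det(N)\det(N^{pq,rs})$, which depends only on the unordered edge set $\{e_1,\ldots,e_5\}$, the two right-hand sides agree up to the sign tracked by the identity.

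The principal obstacle is sign bookkeeping. Each Dodgson $\Psi^{I,J}_K$ is defined only up to sign, depending on the orientation chosen for $G$ and the vertex column deleted from $\widehat{\xi}_G$; and each of the identities above introduces further sign ambiguities from permuting rows and columns. The clean remedy is to fix one choice of $M_G$ at the outset so that every Dodgson is a specific polynomial; then every identity used above carries a determinate sign which can be computed and absorbed into the $\pm$ allowed by the statement of the lemma. The individual identities are elementary, but the case analysis across the four generators is the routine, tedious part of the work.
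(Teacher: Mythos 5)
The first thing to note is that the thesis does not prove this lemma: it is quoted as Lemma 87 of the Brown reference, and the text only verifies a few special cases by direct expansion. So your proposal is being measured against the literature rather than against an in-paper argument. Your overall strategy --- reduce to the adjacent transpositions generating $S_5$ and settle each by a determinantal identity applied to $M_G$ --- is the right one and matches the spirit of the cited proof. Your within-pair cases $(e_1\,e_2)$ and $(e_3\,e_4)$ are correctly diagnosed: after applying $\Psi^{I,J}_K = \Psi^{J,I}_K$ one needs the Pl\"ucker identities $\Psi^{12,34}_{5} - \Psi^{13,24}_{5} + \Psi^{14,23}_{5} = 0$ and $\Psi^{125,345} - \Psi^{135,245} + \Psi^{145,235} = 0$ (signs fixed by a single choice of $M_G$), from which $\Psi^{12,34}_5\Psi^{145,235} - \Psi^{14,23}_5\Psi^{125,345} = \Psi^{12,34}_5\Psi^{135,245} - \Psi^{13,24}_5\Psi^{125,345}$ follows in two lines; these identities do come from Desnanot--Jacobi, so that part of the plan is sound, though asserted rather than carried out. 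Also, the transposition $(e_2\,e_3)$ needs no identity at all: substituting into the definition simply exchanges the two terms of the five-invariant, negating it --- this is exactly the case the thesis verifies "by expanding in terms of Dodgsons."

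The genuine gap is the transposition $(e_4\,e_5)$, which exchanges an ordinary index with the distinguished subscript edge, and your proposed treatment of it does not work as stated. The four factors $\Psi^{e_1e_2,e_3e_4}_{e_5}$, $\Psi^{e_1e_3e_5,e_2e_4e_5}$, $\Psi^{e_1e_3,e_2e_4}_{e_5}$, $\Psi^{e_1e_2e_5,e_3e_4e_5}$ are not single-row, single-column deletions of a common matrix $N$ --- they involve deleting two rows versus three, and setting $\alpha_{e_5}=0$ versus deleting the row and column indexed by $e_5$ --- so the five-invariant is not the right-hand side of one instance of $\det(N)\det(N^{pq,rs}) = \det(N^{p,r})\det(N^{q,s}) - \det(N^{p,s})\det(N^{q,r})$, and there is no "common left-hand side" to compare. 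Indeed, if the five-invariant always equalled such a product $\det(N)\det(N^{pq,rs})$, it would always factor as a product of two determinants, contradicting the irreducible five-invariant computed for $K_{3,3}$ in Chapter 2. Closing this case requires extra input, for instance the contraction--deletion relation $\Psi^{I,J} = \pm\alpha_e\Psi^{Ie,Je} + \Psi^{I,J}_e$ to move $e_4$ and $e_5$ between superscript and subscript positions before any determinantal identity can be invoked; this is precisely the sense in which, as the thesis remarks, "the general proof is more involved."
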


While the general proof is more involved, some specific cases are easily proven. It was noted earlier that $\Psi_K^{I,J} = \Psi_K^{J,I}$. Hence, $^5\Psi(e_1,e_2,e_3,e_4,e_5) = ^5\Psi(e_3,e_4,e_1,e_2,e_5)$. Exchanging edges $e_2$ and $e_3$ gives $^5\Psi(e_1,e_3,e_2,e_4,e_5) = - ^5\Psi(e_1,e_2,e_3,e_4,e_5)$, which can easily be verified by expanding $^5\Psi(e_1,e_3,e_2,e_4,e_5)$ in terms of Dodgsons. We may similarly show that $^5\Psi(e_4,e_2,e_3,e_1,e_5) = - ^5\Psi(e_1,e_2,e_3,e_4,e_5)$.

\begin{remnum} \label{plusminus} Recall from the definition that a Dodgson of a graph $G$ is well-defined up to overall sign, the result of the arbitrary orientation on the edges, the ordering of the edges and vertices, and the choice of row and column deleted in constructing matrix $M_G$. Hence, it is key to fix such a matrix in calculating the five-invariant. Specifically, the choice of matrix will affect each Dodgson in a predictable manner, and calculating the same five-invariant using a different matrix will only affect the overall sign. From Lemma \ref{reindex}, the overall sign of the five-invariant may further change depending on the order of the edges in $S$. \end{remnum}

\begin{definition} Let $G$ be a graph and fix a 5-configuration $S$ in $G$. We say that $S$ \emph{splits} if, for at least one of the Dodgson polynomials $D \in D_S$, $D = 0$. If all Dodgson polynomials for a 5-configuration are non-zero, we say that $S$ is a \emph{non-splitting} 5-configuration. If $S$ splits for every possible 5-configuration $S \subseteq E(G)$, we say that $G$ itself \emph{splits}. If there exists at least one 5-configuration $S$ such that $S$ does not split, we say that $G$ is a \emph{non-splitting} graph. \end{definition}

If a fixed 5-configuration $S = \{e_1,e_2,e_3,e_4,e_5\}$ splits, then, there is a Dodgson that is equal to zero. From Lemma \ref{reindex}, it is possible to permute the indices such that $^5\Psi(e_1,...,e_5) = \pm \Psi^{e_1e_2,e_3e_4}_{e_5} \Psi^{e_1e_3e_5,e_2e_4e_5}$. As each Dodgson is, by construction, linear in each Schwinger coordinate, this five-invariant can be factored into a product of polynomials that is linear in each Schwinger coordinate.

\begin{definition} Let $G$ be a graph and $k_G$ the number of connected components in $G$. The \emph{loop number} (also known as the \emph{first Betti number} in topologically inspired literature), $h_G$, is $h_G = |E(G)| - |V(G)| + k_G$. \end{definition}

\begin{definition} Let $G$ be a graph. We say that $G$ is \emph{primitive divergent} if $|E(G)| = 2h_G$ and for any proper subgraph $g$ of $G$ with at least one edge in each connected component, $|E(g)| > 2h_g$. \end{definition}

Primitive divergent graphs are commonly studied in quantum field theory. They are graphs for which the Feynman integral diverges, but all proper subgraphs meeting the inequality in the definition have convergent Feynman integral.

The following is Corollary 52 in \cite{monster}.

\begin{prop} \label{52} If $G$ is a primitive divergent graph, then $I_G$ converges. \end{prop}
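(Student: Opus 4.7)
The plan is to reduce convergence of $I_G$ to a power-counting problem on a projective compactification. Because $G$ is primitive divergent, $|E(G)| = 2h_G$, so the integrand $\prod_i d\alpha_i / \Psi_G^2$ has total projective weight zero, and setting $\alpha_n = 1$ is merely a choice of affine chart on $\mathbb{P}^{|E(G)|-1}$. Thus $I_G$ may be reinterpreted as the integral of a projective form over the positive chamber, and convergence reduces to controlling the form near each boundary face, i.e.\ near each coordinate subspace $\{\alpha_e = 0 : e \in S\}$ indexed by a nonempty proper edge subset $S \subsetneq E(G)$.

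First I would establish the leading behavior of $\Psi_G$ near such a face. Scaling $\alpha_e = t\beta_e$ for $e \in S$ and freezing the remaining coordinates, one obtains an expansion
\[
\Psi_G \;=\; t^{h_g}\, P(\beta,\alpha) \;+\; O(t^{h_g+1}),
\]
where $g$ is the subgraph of $G$ on edge set $S$, $h_g$ is its loop number, and $P$ is a polynomial strictly positive on the interior of the positive chamber. The power $t^{h_g}$ is forced by the observation that any spanning tree of $G$ meets $g$ in at most $|V(g)| - k_g$ edges, so every monomial of $\Psi_G$ omits at least $h_g$ of the variables $\alpha_e$ with $e \in S$; the coefficient $P$ factors as a spanning-forest polynomial of $g$ times $\Psi_{G/g}$.

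The power count is then immediate: the form $\prod_{e \in S} d\alpha_e$ contributes $t^{|S|-1}\,dt$ and $\Psi_G^{-2}$ contributes $t^{-2h_g}$ at leading order, so the integrand behaves like $t^{|S|-2h_g-1}\,dt$ near $t=0$. This is integrable if and only if $|S| > 2h_g$, which is precisely the primitive divergent hypothesis applied to the proper subgraph $g$. Since $g$ is edge-induced it automatically has at least one edge in each connected component, so the hypothesis does apply. The interior of the positive chamber contributes nothing singular because $\Psi_G > 0$ there.

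The main obstacle I expect is passing from these face-wise estimates to a global bound, since the coefficient $P$ above can itself degenerate along deeper strata where additional coordinates vanish, causing naive iterated integrals to overcount. The standard remedy is to work on a log-resolution of the arrangement of coordinate subspaces indexed by subgraphs of $G$, on which $\Psi_G$ becomes a normal-crossings divisor; the local power-counting estimate then applies stratum by stratum, and summing the bounds over the finite collection of strata yields the convergence of $I_G$.
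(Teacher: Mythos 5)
The thesis does not actually prove this statement: it is quoted verbatim as Corollary 52 of \cite{monster}, so there is no internal argument to compare yours against. Judged on its own terms, your sketch is the standard power-counting proof of convergence of the period (going back to Bloch--Esnault--Kreimer and carried out in \cite{monster}), and it correctly locates the one place the hypothesis enters: the boundary face indexed by a proper edge subset $S$ contributes $t^{|S|-2h_g-1}\,dt$, integrable exactly when $|E(g)|>2h_g$, and the edge-induced subgraph $g$ has no isolated vertices, so the primitive-divergence inequality does apply to it. Two points need attention. First, with the convention $\Psi_G=\sum_T\prod_{e\notin T}\alpha_e$ used here (the one that makes the form projectively of weight zero), a spanning tree meets $g$ in at most $|V(g)|-k_g$ edges, so each monomial \emph{contains} at least $h_g$ of the variables indexed by $S$ rather than ``omits'' them; your displayed expansion $\Psi_G=t^{h_g}P+O(t^{h_g+1})$ is the correct conclusion, but the sentence justifying it states the wrong inequality. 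Second, and more substantively, your closing paragraph concedes that the face-wise estimates do not by themselves bound $I_G$ and appeals to an uncarried-out ``standard remedy''; that globalization is essentially the entire analytic content of the proposition, precisely because the coefficient $P$ degenerates on deeper strata as you note. In the cited source this is done concretely, either by iterated blow-ups of the coordinate linear subspaces so that local power counting applies chart by chart, or equivalently by decomposing the simplex into sectors $\alpha_{\sigma(1)}\le\cdots\le\alpha_{\sigma(N)}$ and bounding $\Psi_G$ from below by a single monomial on each sector. As written, then, your proposal is a correct projective reduction plus a correct local estimate, with the step that turns these into a proof cited rather than proved --- which, in fairness, is also how the thesis itself treats the entire proposition.
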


Let $G$ be a graph and apply an ordering to edges in $G$. Let $n =|E(G)|$. Regardless of our ordering of the edges of $G$ (see \cite{monster}, page 19 for an explanation), $$I_G = \int\limits_0^\infty \cdots \int\limits_0^\infty \frac{\prod_{i=1}^{n-1} \mathrm{d}\alpha_i}{\Psi^2_G} \big|_{\alpha_n = 1}$$ is equal.

The following are Corollary 129 and Proposition 130 in \cite{monster}. Recall that $I_i$ is the $i^\text{th}$ partial Feynman integral.

\begin{prop} \label{129} Let $G$ be a primitive divergent graph, $|E(G)| = n$. Then, the integral at the fifth stage of integration is $$I_5 = \int\limits_0^\infty \cdots \int\limits_0^\infty \frac{F}{^5\Psi(e_1,e_2,e_3,e_4,e_5)} \prod_{i=6}^{n-1} d\alpha_i \mid_{\alpha_n = 1}, $$ where $F$ is linear combination of trilogarithms, products of dilogarithms and logarithms, and third powers of logarithms, each in terms of the remaining Schwinger coordinates. \end{prop}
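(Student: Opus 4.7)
The plan is to perform the first five stages of integration one Schwinger coordinate at a time, tracking how the transcendental weight of the integrand and the structure of its denominator evolve at each stage. The key structural fact is that every Dodgson polynomial is linear in each Schwinger coordinate (the Dodgson extension of Proposition \ref{smallcutcontract}), which allows each partial integration to be carried out in closed form via partial fractions; convergence of the integrand on $(0,\infty)$ is guaranteed by primitive divergence through Proposition \ref{52}.

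First I would compute $I_1$. Writing $\Psi_G = \alpha_1 \Psi^{e_1,e_1} + \Psi_{e_1}$, the elementary integration
$$\int_0^\infty \frac{d\alpha_1}{\bigl(\alpha_1 \Psi^{e_1,e_1} + \Psi_{e_1}\bigr)^2} = \frac{1}{\Psi^{e_1,e_1}\,\Psi_{e_1}}$$
leaves a rational integrand whose denominator is a product of two Dodgson polynomials; no transcendental function has yet appeared. For the second integration I would expand each of these Dodgson factors as an affine function of $\alpha_2$, yielding $\int_0^\infty d\alpha_2 / ((A\alpha_2+B)(C\alpha_2+D)) = \log(AD/BC)/(AD-BC)$, where by the Jacobi/Dodgson minor identity the new denominator $AD - BC$ is again (up to sign) a Dodgson polynomial. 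Stages three and four repeat the same template: applied to an integrand of the form (weight $k$)$/$(product of two Dodgsons linear in the next variable), integration over that variable raises the weight by one and reduces the denominator to a single new Dodgson, yielding a dilogarithm at stage three and a trilogarithm at stage four.

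The crucial and most delicate step is the fifth integration, where the five-invariant emerges. After stage four the integrand has the form $F_3/(P \cdot Q)$ with $F_3$ of transcendental weight three and $P, Q$ two Dodgson polynomials linear in $\alpha_5$; a naive extrapolation of the previous pattern would predict a weight-four result. What actually emerges as the denominator after integrating out $\alpha_5$ is precisely
$$\Psi^{e_1e_2,e_3e_4}_{e_5}\,\Psi^{e_1e_3e_5,e_2e_4e_5} - \Psi^{e_1e_3,e_2e_4}_{e_5}\,\Psi^{e_1e_2e_5,e_3e_4e_5} = {}^5\Psi(e_1,\ldots,e_5),$$
and the Pl\"ucker-type identity among Dodgson minors that produces this combination simultaneously forces cancellations among the polylogarithmic pieces so that the final numerator $F$ stays at weight three. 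The main obstacle is making this algebraic collapse explicit: verifying both that the five-invariant is exactly the polynomial that emerges as the denominator at stage five, and that the would-be weight-four polylogarithmic contributions simplify, via the Dodgson/Pl\"ucker relations among the four minors appearing in $^5\Psi$, to a linear combination of $\mathrm{Li}_3$'s, $\mathrm{Li}_2 \cdot \log$'s, and $\log^3$'s in the remaining Schwinger coordinates, together with the routine bookkeeping of logarithmic arguments carried forward from the earlier stages.
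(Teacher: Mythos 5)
The thesis does not actually prove this proposition: it is quoted verbatim from Brown's paper (\cite{monster}, Corollary 129), with the first four stages of integration referenced to Section 10.3 of that paper and the fifth stage to its Lemma 128. So the relevant comparison is between your sketch and Brown's argument, and your sketch does follow the same overall strategy (iterated one-variable integration, exploiting linearity of graph polynomials in each Schwinger coordinate, with denominators controlled by Dodgson identities). The difficulty is that what you have written is a plan rather than a proof. The entire content of the proposition lives in the fifth integration --- that the denominator collapses to the single polynomial $^5\Psi$ rather than a product of two Dodgsons linear in $\alpha_5$, and that the numerator remains of weight three --- and you explicitly defer exactly this step as ``the main obstacle'' to be resolved by unspecified Pl\"ucker-type cancellations. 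Asserting that the identities ``simultaneously force cancellations'' is restating the conclusion, not deriving it; without exhibiting the partial-fraction decomposition at stage five and the specific Dodgson identities (Brown's generalizations of $\Psi^{i,j}\Psi^{k,l} - \Psi^{i,k}\Psi^{j,l} = \pm\Psi^{ik,jl}\Psi_{\emptyset}$-type relations) that produce $^5\Psi$ as the resultant of the two stage-four denominator factors, the proposal does not establish the statement.

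There are also inaccuracies in the stages you do describe. At the second integration, $AD-BC$ is not ``again a Dodgson polynomial'': with $A=\Psi^{12,12}$, $B=\Psi^{1,1}_2$, $C=\Psi^{2,2}_1$, $D=\Psi_{12}$, the Dodgson identity gives $AD-BC=\pm\bigl(\Psi^{1,2}\bigr)^2$, the \emph{square} of a Dodgson; and the denominators of $I_3$ and $I_4$ are \emph{products} of two Dodgson polynomials (as the thesis itself remarks just after Proposition \ref{130}), not single ones. Getting these shapes right matters, because the precise form of the stage-four denominator as a product $PQ$ of two particular Dodgsons is what determines which resultant appears at stage five and hence why the answer is the five-invariant $\Psi^{e_1e_2,e_3e_4}_{e_5}\Psi^{e_1e_3e_5,e_2e_4e_5}-\Psi^{e_1e_3,e_2e_4}_{e_5}\Psi^{e_1e_2e_5,e_3e_4e_5}$ and not some other combination. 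To turn the sketch into a proof you would need to carry the exact denominators through all five stages and verify the weight bookkeeping at each one, which is precisely the content of Brown's Section 10.3 and Lemma 128.
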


\begin{prop} \label{130} Suppose that $P_n$ is the denominator of the partial Feynman integral at the $n^\text{th}$ stage of integration. Suppose that $P_n$ factorizes into a product of linear factors in $\alpha_{n+1}$. If it is non-zero, then the denominator at the $(n+1)^\text{st}$ stage is $$P_{n+1} = \sqrt{D_{\alpha_{n+1}} (P_n)} ,$$ where $D_{\alpha_{n+1}}$ is the discriminant with respect to Schwinger coordinate $\alpha_{n+1}$. \end{prop}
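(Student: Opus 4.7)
The plan is to carry out the $(n{+}1)^\text{st}$ integration explicitly, exploiting the linear factorization of $P_n$ in $\alpha_{n+1}$ to reduce the integrand to a sum of polylogarithmic primitives. First, write
$$P_n = (A\alpha_{n+1} + B)(C\alpha_{n+1} + D),$$
where $A,B,C,D$ are polynomials in the remaining Schwinger coordinates. Expanding, $P_n = AC\alpha_{n+1}^2 + (AD+BC)\alpha_{n+1} + BD$, so the discriminant is
$$D_{\alpha_{n+1}}(P_n) = (AD + BC)^2 - 4AC\cdot BD = (AD - BC)^2,$$
and hence $\sqrt{D_{\alpha_{n+1}}(P_n)} = \pm(AD - BC)$. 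The hypothesis that $P_n$ is non-zero forces $AD - BC \neq 0$, which is exactly what we need to perform partial fractions.

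Next, I would apply partial fraction decomposition to $1/P_n$:
$$\frac{1}{P_n} = \frac{1}{AD-BC}\left(\frac{A}{A\alpha_{n+1}+B} - \frac{C}{C\alpha_{n+1}+D}\right).$$
Multiplying by the numerator described in Proposition \ref{129} (or its higher-stage analog) — namely a combination of trilogarithms, products of dilogarithms and logarithms, and powers of logarithms in the remaining Schwinger coordinates — produces an integrand that is a sum of terms of the form $\frac{F(\alpha_{n+1})}{a\alpha_{n+1}+b}$, each with a \emph{single} linear denominator in $\alpha_{n+1}$. I would then integrate each such term against $\alpha_{n+1}$ over $[0,\infty)$ using the standard antiderivative identities for polylogarithms: integrals of $\log^k(\alpha_{n+1})/(a\alpha_{n+1}+b)$ produce polylogarithms evaluated at rational functions of the form $-b/a$, while the leading $\frac{1}{AD-BC}$ prefactor from the partial fractions is the only new rational factor introduced.

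Finally, I would verify that after summing all the pieces, the combined denominator is exactly $AD - BC$ (up to sign) and no additional irreducible polynomial factors survive. The key ingredient here is that each polylogarithmic primitive, while introducing higher-weight transcendental functions, only ever contributes the factor $AD - BC$ coming from the partial fraction coefficient; the arguments of the resulting polylogarithms are ratios of the original linear factors, which depend on $A,B,C,D$ but not through any new polynomial denominator.

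The main obstacle will be the bookkeeping of the polylogarithmic numerator: one must show via shuffle/stuffle relations (or Kummer-type identities) that the transcendental functions produced by integrating products of logarithms and dilogarithms against $1/(a\alpha_{n+1}+b)$ combine into a numerator $F'$ of the same general form described in Proposition \ref{129}, rather than producing cross-terms whose simplification introduces further factors into the denominator. Once this cancellation is verified, the denominator at the $(n+1)^\text{st}$ stage is precisely $AD - BC = \sqrt{D_{\alpha_{n+1}}(P_n)}$, completing the proof.
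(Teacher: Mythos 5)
This proposition is not proved in the thesis at all: it is quoted verbatim as Proposition 130 of \cite{monster}, so there is no in-paper argument to compare against, and your proposal is really a reconstruction of Brown's denominator-reduction argument. Your opening algebra is correct and is indeed the right first move: with $P_n=(A\alpha_{n+1}+B)(C\alpha_{n+1}+D)$ the discriminant is $(AD-BC)^2$, and the partial-fraction coefficient $1/(AD-BC)$ is exactly where the new denominator comes from.

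There are, however, two genuine gaps. First, the claim that ``$P_n$ non-zero forces $AD-BC\neq 0$'' is false: take $P_n=(\alpha_{n+1}+1)^2$, which is non-zero but has $AD-BC=0$. The non-vanishing of the resultant $AD-BC$ (equivalently of the discriminant) is the \emph{hypothesis} of the proposition --- it is what the phrase ``if it is non-zero'' refers to, and denominator reduction simply terminates when it fails --- so you cannot derive it from anything. Second, and more seriously, the entire substantive content of the proof is the step you defer to ``shuffle/stuffle relations'': one must show that the primitive of (hyperlogarithm in $\alpha_{n+1}$)$/(a\alpha_{n+1}+b)$, evaluated between $0$ and $\infty$, is again a hyperlogarithm in the remaining variables whose only new \emph{rational} singularities are those already produced by the partial-fraction coefficient, i.e.\ that the new letters entering the iterated integrals are ratios of the linear forms and never generate additional polynomial denominators upon taking the regularized limits at the endpoints. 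That requires the full machinery of primitives and regularized limits of hyperlogarithms (this is where Brown's proof does all its work), and asserting that the cancellation ``is verified'' does not discharge it. As it stands the proposal is a correct outline of the known proof with its central step left open, plus one incorrect logical inference about the hypothesis.
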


This method of calculating the denominators, starting with the 5-invariant, is called \emph{denominator reduction}, introduced by Francis Brown (see \cite{monster}). If there is an ordering of the edges such that we may complete this string of calculations, the graph is \emph{denominator reducible}. Specifically, Proposition \ref{130} relies on denominator $P_n$ factoring into terms linear in $\alpha_{n+1}$.

Our understanding of the entire integral in these cases can therefore be derived from the five-invariant in the denominator of the fifth partial Feynman integral. Specifically, knowing the numerator and denominator allows us to calculate the full fraction in subsequent partial Feynman integrals.

For an arbitrary ordering of the edges of a primitive divergent graph, the first four partial Feynman integrals are given in Section 10.3 of \cite{monster}, and the fifth is found in Lemma 128. The denominators in each of these partial Feynman integrals are products of Dodgson polynomials. This, then, is the value of splitting Dodgsons in Feynman integral calculations; if a graph splits, then for any arbitrary ordering of the edges, we may immediately determine the denominators up to the sixth partial Feynman integral. As was stated before, the residues carry a great deal of information regarding the content of the Feynman integral.

Our method of approach, as stated prior, will be primarily graph theoretic. Specifically, we are looking for graphs that prove obstructions to naive edge orderings producing easily calculable sixth partial Feynman integral denominators. As such, we now introduce concepts relating to minor-closed graphs properties.

Let $G$ and $H$ be graphs and $\mathcal{F}$ a set of graphs. If a graph $G_m$ can be produced through a (possibly empty) series of  deletions and contractions of edges in $G$, we say that $G_m$ is a \emph{minor} of $G$, denoted $G_m \preceq G$. We say that $G$ is \emph{$H$-free} if $G$ does not have a minor isomorphic to $H$. We say that $\mathcal{F}$ is \emph{minor-closed} if for all $G \in \mathcal{F}$, $G_m \preceq G$ implies that $G_m \in \mathcal{F}$.

\begin{prop} For a graph $H$, the set of all $H$-free graphs is minor-closed. \end{prop}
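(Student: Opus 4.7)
The plan is to unwind the definitions and reduce the claim to transitivity of the minor relation. Let $\mathcal{F}_H$ denote the class of $H$-free graphs. By the definition of minor-closed, I must show that whenever $G \in \mathcal{F}_H$ and $G_m \preceq G$, we also have $G_m \in \mathcal{F}_H$.

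First I would argue by contrapositive: assume $G_m \notin \mathcal{F}_H$, i.e.\ there is a minor $H' \preceq G_m$ with $H' \cong H$. The goal is then to exhibit $H$ as a minor of $G$, contradicting $G \in \mathcal{F}_H$. For this, I would invoke the transitivity of $\preceq$: if $H \preceq G_m$ and $G_m \preceq G$, then $H \preceq G$.

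The only nontrivial content is this transitivity, and I would establish it directly from the definition of minor given just before the statement. By hypothesis, $G_m$ is obtained from $G$ by a finite sequence of edge deletions and contractions, say $G = G^{(0)}, G^{(1)}, \ldots, G^{(k)} = G_m$. Similarly $H$ is obtained from $G_m$ by a finite sequence $G_m = G^{(k)}, G^{(k+1)}, \ldots, G^{(k+\ell)} = H$. Concatenating these sequences produces a finite sequence of deletions and contractions that starts at $G$ and ends at $H$, so $H \preceq G$ by definition. Isomorphism invariance of the construction handles the fact that $H'$ is only isomorphic to $H$ rather than literally equal.

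I do not anticipate any real obstacle: the whole statement is essentially a bookkeeping exercise, and the only subtle point is making sure that the concatenation of two deletion/contraction sequences is again a deletion/contraction sequence, which is immediate because each individual step in each sequence is itself a deletion or a contraction. One technical care-point, which I would mention briefly, is that after performing edges-operations on $G$ to reach $G_m$, the edges of $G_m$ correspond to a subset of edges of $G$, so the subsequent deletions/contractions producing $H$ from $G_m$ can be interpreted as operations on the corresponding edges of $G$; this justifies the concatenation.
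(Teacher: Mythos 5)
Your argument is correct and is essentially the paper's own: the paper's one-line proof simply invokes transitivity of $\preceq$ (writing $H \preceq G_m \preceq G$), which you establish explicitly by concatenating the two deletion/contraction sequences. The extra detail you supply is a faithful expansion of the same idea, not a different route.
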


\begin{proof} Trivially, for any graph $G$ and minor $G_m$ of $G$, $H \preceq G_m \preceq G$. The proof follows immediately. \end{proof}

The famous Robertson-Seymour theorem (\cite{robertsonseymour}) proves that for any minor-closed family of graphs, there exists a finite set of forbidden minors.

\begin{RS} For any minor-closed family of graphs $\mathcal{G}$, there exists a finite set of minors $\mathcal{F}$ such that $G$ is $F$-free for all $F \in \mathcal{F}$ if and only if $G \in \mathcal{G}$. \end{RS}

Common examples of minor-closed families include forests and planar graphs. Wagner proved that for planar graphs, the set of forbidden minors is $K_5$ and $K_{3,3}$. For forests, the only forbidden minor is trivially a single vertex with an incident loop.

\begin{definition} Let $G$ be a graph that is non-splitting for a particular 5-configuration $S$. If the 5-configuration $S$ splits in the graphs $G \cut e$ and $G \contract e$ for all edges $e \in E(G) - S$, we say that $G$ is \emph{minor-minimal non-splitting with respect to $S$}. If $e \in E(G)$ and graphs $G \cut e$ and $G \contract e$ split for all edges $e$, then we say that $G$ is \emph{minor-minimal non-splitting}. \end{definition}

To show that a specific graph is non-splitting, it is sufficient to show that there exists a 5-configuration for which the five-invariant cannot be factored to linear terms for each Schwinger coordinate. To show that a graph splits on the other hand, a Dodgson equal to zero would need to be shown for all possible 5-configurations. Further, to show that a non-splitting graph is minor-minimal, each non-isomorphic minor would need to be shown to be splitting. In any case that a graph is claimed to be splitting or minor-minimal non-splitting, verification was done using computers for large computations, and in the case of minor-minimal non-splittingness, every minor created by deleting or contracting a single edge was similarly checked. These checks are long and not enlightening, and as such are omitted here.

The following theorem is included here for motivational completeness, though the proof follows from Corollary \ref{cutablecontractable} in Chapter \ref{basics}.

\begin{theorem} \label{minorclosedproperty} Splitting is a minor-closed property. \end{theorem}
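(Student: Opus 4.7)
The plan is to reduce the statement to showing that splitting is preserved under a single edge deletion and a single edge contraction, so that induction on the length of a minor derivation $G \succeq G_1 \succeq \cdots \succeq H$ finishes the job. Fix any edge $e \in E(G)$ and any 5-configuration $S' \subseteq E(G \setminus e)$; the argument for $G / e$ is identical. Since $e \notin S'$, the same edge set is a 5-configuration of $G$, and by hypothesis some Dodgson $\Psi^{I,J}_K(G) = 0$ with $I \cup J \cup K = S'$ of one of the two admissible shapes. Crucially, $e \notin I \cup J \cup K$, so $e$ is a ``free'' edge for this Dodgson.

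The key ingredient is a deletion-contraction identity for Dodgsons: whenever $e \notin I \cup J \cup K$,
\[
\Psi^{I,J}_K(G) \;=\; \alpha_e\,\Psi^{I,J}_K(G \setminus e) \;+\; \Psi^{I,J}_K(G / e),
\]
a direct generalization of Proposition \ref{smallcutcontract} and presumably exactly what the forward-referenced Corollary \ref{cutablecontractable} supplies. I would establish it by appealing to the spanning-tree characterization in Proposition \ref{calcdodgson}: each common spanning tree of the pair of minors defining $\Psi^{I,J}_K(G)$ either omits $e$, in which case it corresponds to a common spanning tree of the analogous pair of minors for $G \setminus e$ and the monomial it contributes carries an extra factor of $\alpha_e$, or it contains $e$, in which case contracting $e$ yields a common spanning tree of the pair of minors for $G / e$ with the matching monomial. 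Since the Dodgsons of $G \setminus e$ and $G / e$ are polynomials in the variables $\{\alpha_f : f \notin S' \cup \{e\}\}$ and so do not involve $\alpha_e$, the polynomial equality $\Psi^{I,J}_K(G) = 0$ forces both $\Psi^{I,J}_K(G \setminus e) = 0$ and $\Psi^{I,J}_K(G / e) = 0$. Hence $S'$ splits in both smaller graphs, and since $S'$ was arbitrary, $G \setminus e$ and $G / e$ are themselves splitting. Iterating over the steps of a minor derivation completes the induction.

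The main obstacle is verifying the Dodgson deletion-contraction identity displayed above. One must check that the two multi-minor structures appearing in Proposition \ref{calcdodgson} interact cleanly with deleting or contracting the extra edge $e$, so that ``$e$ is not in the tree'' on the $G$ side corresponds faithfully to a common spanning tree of the minors for $G \setminus e$ while ``$e$ is in the tree'' corresponds faithfully to a common spanning tree of the minors for $G / e$. One must also check that the sign ambiguity inherent in the matrix definition of a Dodgson cannot cause a spurious cancellation between the two pieces on the right-hand side. Once that identity is secured, the rest of the argument is essentially formal bookkeeping.
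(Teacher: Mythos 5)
Your proposal is correct and takes essentially the same approach as the paper: the paper derives this theorem from Corollary \ref{cutablecontractable} (itself a consequence of Theorem \ref{cutablecontractablegeneral}), which is exactly the spanning-tree correspondence you invoke to justify the Dodgson deletion--contraction identity in the free edge $e$. The only cosmetic difference is that you conclude via the polynomial identity and a degree argument in $\alpha_e$, whereas the paper concludes via the observation that a Dodgson vanishes iff its set of common spanning trees is empty (no cancellation occurs), so that emptiness is inherited by the restricted tree sets for $G \setminus e$ and $G/e$.
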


As such, there exists a set of forbidden minors for graphs that split. It is therefore natural to ask what the complete set of forbidden minors for this class of graphs is, and given the Feynman integral motivation, specifically what the simple 3-connected forbidden minors are.

%
%

\chapter{Dodgson Preliminaries}
\label{basics}

This chapter introduces some standard theorems regarding Dodgson polynomials. The results will be useful in subsequent chapters.

As stated previously, the graphs $K_{3,3}$ and $K_5$, shown in Figure \ref{k5k33}, are the two forbidden minors for planar graphs. With $K_{3,3}$ labelled as in this figure, we may calculate the five-invariants using Dodgsons. Specifically, we fix a matrix $M_G$ resulting from a particular edge ordering, orientation, and choice of deleted vertex. As stated in Remark \ref{plusminus}, using the matrix $M_G$ in each determinant calculation for the Dodgsons fixes the sign on each Dodgson, which is key in the construction of the five-invariant. Using one such matrix $M_G$, then, we calculate Dodgsons, $$ \Psi^{e_1e_2,e_4e_6}_{e_8} =  \alpha_{e_7}\alpha_{e_9}, $$ $$ \Psi^{e_1e_4e_8,e_2e_6e_8} = - \alpha_{e_3}, $$ $$ \Psi^{e_1e_4,e_2e_6}_{e_8} = - \alpha_{e_3}\alpha_{e_5} - \alpha_{e_5}\alpha_{e_9}, $$ and $$ \Psi^{e_1e_2e_8,e_4e_6e_8} = \alpha_{e_7}\alpha_{e_9}. $$ Thus, the five-invariant from this particular 5-configuration is, $$^5\Psi_{K_{3,3}}(e_1,e_2,e_4,e_6,e_8) = \pm (\alpha_{e_5}\alpha_{e_9}^2 + \alpha_{e_3}\alpha_{e_5}\alpha_{e_9} + \alpha_{e_5}\alpha_{e_7}\alpha_{e_9} + \alpha_{e_3}\alpha_{e_5}\alpha_{e_7} - \alpha_{e_3}\alpha_{e_7}\alpha_{e_9}).$$ Omitting individual Dodgson calculations, the five-invariant for $K_5$ is \begin{align*} ^5\Psi_{K_5}(e_1,e_3,e_4,e_5,e_8) = \pm \alpha_{e_2}\alpha_{e_6}\alpha_{e_7}\alpha_{e_{10}} &(\alpha_{e_6}\alpha_{e_9}^2 + \alpha_{e_2}\alpha_{e_6}\alpha_{e_9} + \alpha_{e_2}\alpha_{e_9}\alpha_{e_{10}} \\ &+ \alpha_{e_2}\alpha_{e_7}\alpha_{e_9} + \alpha_{e_2}\alpha_{e_7}\alpha_{e_{10}}) . \end{align*}

\begin{figure}[h]
  \centering
    \includegraphics[scale=1.0]{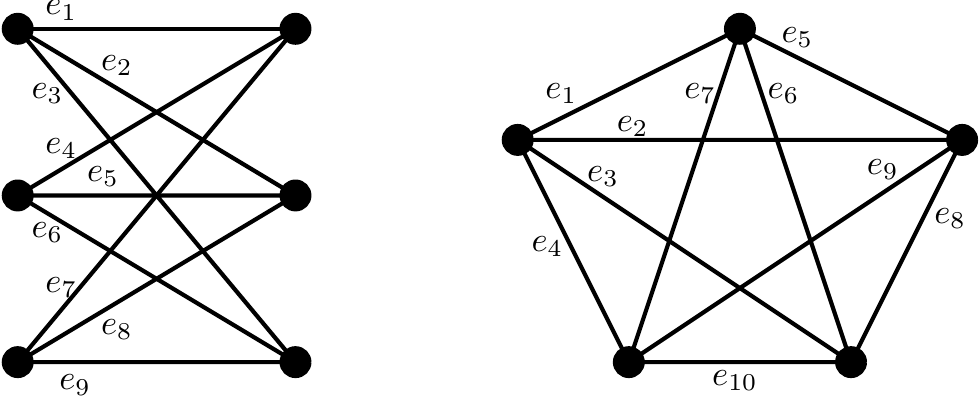}
  \caption{$K_{3,3}$ and $K_5$}
\label{k5k33}
\end{figure}

Both of these five-invariants are fully factored as written. As both polynomials would be linear in each Schwinger coordinate if these 5-configurations split, it follows that the selected 5-configurations do not split. The calculation of these five-invariants to show that $K_5$ and $K_{3,3}$ are non-splitting first appeared in \cite{monster}.

It is important to note that these are not the only non-splitting 5-configurations for $K_5$ and $K_{3,3}$. In $K_{3,3}$, edge set $\{ e_1, e_2, e_4, e_5, e_9\}$ is also a non-splitting 5-configuration. Similarly in $K_5$, 5-configuration $\{ e_1, e_4, e_5, e_8, e_{10} \}$ does not split. Up to symmetry, these are the only non-splitting 5-configurations in these graphs.

Any other minor-minimal non-splitting graph must be planar, as $K_5$ and $K_{3,3}$ are minor minimal non-splitting. By restricting our attention to planar graphs, we may introduce the graph theoretic dual as a useful tool for proving various properties of planar non-splitting graphs. For a planar graph $G$, a \emph{planar dual} of $G$ is the graph $G'$ created by fixing a planar embedding of $G$, associating a vertex to each region (including the infinite region outside of the graph), and connecting vertices in $G'$ with an edge if the associated regions in $G$ share an edge. It is a standard result that 3-connected graphs have unique planar dual (see Theorem 2.6.7 in \cite{graphbojan}). Thus, for the purposes of our main result - the list of forbidden minors for splitting, 3-connected, simple graphs - duals are unique up to isomorphism.

Since our main concern is graph minors, it is important to note that edge deletions and contractions are dual operations. Specifically, let $G$ be a planar graph and $e \in E(G)$. Let $G'$ be a dual of $G$ such that $e' \in E(G')$ is the edge corresponding to $e$. Then, graph $G' \cut e'$ is dual to $G \contract e$ and similarly $G' \contract e'$ is dual to $G \cut e$. Figure \ref{dualfig} demonstrates these dual operations, the grey blob in this diagram representing the unshown parts of graph $G$. In the special case that edge $e$ is a bridge and hence $e'$ is a loop in $G'$, we identify the vertices in $G'$ shown in this figure, and the statement still holds.

\begin{figure}[h]
  \centering
    \includegraphics[scale=1.0]{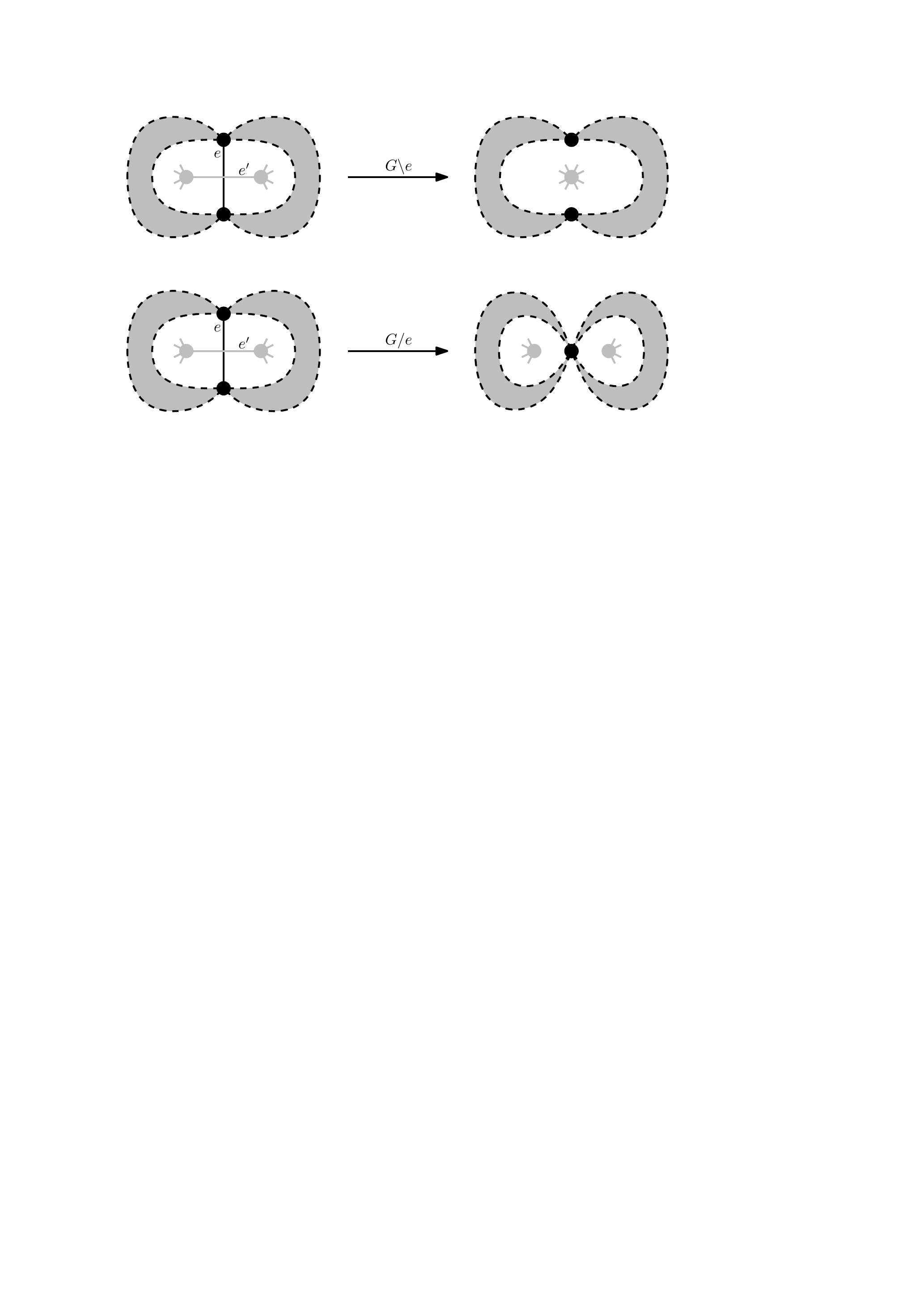}
  \caption{The delete and contract operations across dual graphs.}
\label{dualfig}
\end{figure}

\begin{prop} \label{dual} Let $G$ be a graph and $G'$ its planar dual. If the graph $G$ has a non-splitting 5-configuration, then $G'$ also has a non-splitting 5-configuration. Further, if $G$ is minor-minimal non-splitting, then $G'$ is also. \end{prop}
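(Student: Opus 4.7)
The plan is to set up a bijection between the Dodgson polynomials of $G$ at a 5-configuration $S$ and those of $G'$ at the dual 5-configuration $S' = \{e^* : e \in S\}$, under which corresponding Dodgsons vanish simultaneously. The main tool is the classical fact that for a connected planar graph $H$ with planar dual $H^*$, complementation gives a bijection between spanning trees: $T$ is a spanning tree of $H$ iff $E(H) \setminus T$ is a spanning tree of $H^*$.

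Recall from Remark \ref{alwayscut} that the 30 Dodgsons in $D_S$ split into two types: the 15 of the form $\Psi^{I,J}_K$ with $|K|=1$, $|I|=|J|=2$, and $I, J, K$ disjoint (exactly one edge of $S$ contracted in both minors), and the 15 of the form $\Psi^{I,J}$ with $|I|=|J|=3$ and $|I \cap J|=1$ (exactly one edge deleted in both minors). I would set up the bijection $D_S \leftrightarrow D_{S'}$ by sending $\Psi^{\{a,b\},\{c,d\}}_{\{e\}}$ in $G$ to $\Psi^{\{c^*,d^*,e^*\},\{a^*,b^*,e^*\}}$ in $G'$ (and vice versa), so that the two types swap: a ``doubly contracted'' edge becomes a ``doubly deleted'' one in the dual. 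Using the identities $(H \setminus f)^* = H^* / f^*$ and $(H / f)^* = H^* \setminus f^*$ iterated on disjoint edge sets, a direct computation shows the two minors $\mone, \mtwo$ associated to $D$ in $G$ have planar duals equal to the two minors $\mone^*, \mtwo^*$ associated to the dual Dodgson $D^*$ in $G'$, and both pairs share the same underlying edge set, namely $E(G) \setminus S$ identified with $E(G') \setminus S'$ via the edge correspondence.

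By Proposition \ref{calcdodgson} and the fact that no cancellation occurs in the spanning tree expansion, $D = 0$ iff $T_D = \emptyset$, i.e.\ iff no edge set $T \subseteq E(G) \setminus S$ spans both $\mone$ and $\mtwo$. Complementation inside $E(G) \setminus S$ then shows that such a $T$ exists iff $T' = (E(G) \setminus S) \setminus T$ spans both $\mone^*$ and $\mtwo^*$, and hence iff $T_{D^*} \neq \emptyset$. Therefore $D = 0$ iff $D^* = 0$, so $S$ is non-splitting in $G$ iff $S'$ is non-splitting in $G'$, establishing the first assertion.

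For the minor-minimal claim, suppose $G$ is minor-minimal non-splitting. Then $G'$ is non-splitting by the first part. For any $e^* \in E(G')$ with corresponding $e \in E(G)$, the identities $G' \cut e^* = (G \contract e)^*$ and $G' \contract e^* = (G \cut e)^*$ (illustrated in Figure \ref{dualfig}) together with the first part applied to $G \cut e$ and $G \contract e$, which split by hypothesis, yield that $G' \cut e^*$ and $G' \contract e^*$ also split. Hence $G'$ is minor-minimal non-splitting. The principal obstacle will be verifying the bijection carefully: in particular, tracking the index reshuffle that swaps the two Dodgson types, and confirming that the sequence of dualized deletions and contractions on disjoint edge sets produces exactly the minors specified by the dual Dodgson rather than some shifted variant.
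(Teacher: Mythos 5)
Your proposal is correct and follows essentially the same route as the paper: both establish the spanning-tree complementation between a planar graph and its dual, pair each Dodgson $\Psi^{ab,cd}_{e}$ of $G$ with the dual Dodgson $\Psi^{a^*b^*e^*,c^*d^*e^*}$ of $G'$ whose minors are the planar duals of the original minors, transfer trees by complementing within $E(G)\setminus S$, and deduce minor-minimality from the duality of deletion and contraction. The only cosmetic difference is that you argue the full equivalence $D=0$ iff $D^*=0$, while the paper proves just the one implication it needs.
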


\begin{proof} For each edge $e \in E(G)$, let $e' \in E(G')$ be the corresponding edge of the dual. For any graph $G$, let $F(G)$ be the set of faces of $G$ in a particular planar embedding. Then, $|E(G)| = |E(G')|$, $|F(G)| = |V(G')|$, and $|V(G)| = |F(G')|$. By Euler's Formula, $|V(G)| + |F(G)| = |E(G)| + 2$ for any connected planar graph $G$. For any tree $T$ in $G$, construct a tree $T'$ in $G'$ by setting $e' \in T'$ if and only if $e \notin T$. To show that $T'$ is connected, note that we may travel between faces of $G$ by moving over edges in $E(G)$ that are not in $T$. There must exist such a path from each face to the outer face, as $T$ does not contain a cycle. By construction, then, we may travel between any two vertice $u,v \in V(G')$ along edges in $E(T')$ by first travelling to the vertex in $V(G')$ corresponding to the outer face in $G$. Thus, $T'$ is connected. Further, since $G$ has $|V(G)|$ vertices, $T$ has $|V(G)| -1$ edges. Thus, there are $|E(G)| - (|V(G)| -1)$ edges not in $T$, and $T'$ has \begin{align*}|E(G')| - (|F(G')| -1) &= (|V(G')| -2) +1 \\ &= |V(G')| -1 \end{align*} edges in $T'$. Hence, $T'$ is a tree in $G'$.

Suppose $S \subseteq E(G)$ is a non-splitting 5-configuration. Let $S' \subseteq E(G')$ such that $e' \in S'$ if and only if $e \in S$. We will show that $S'$ is non-splitting in $G'$. In calculating Dodgsons, the minors of $\Psi^{e_1e_2,e_3e_4}_{e_5}$ are dual to the minors of $\Psi^{e'_1e'_2e'_5,e'_3e'_4e'_5}$. Similarly, the minors of $\Psi^{e_1e_2e_5,e_3e_4e_5}$ are dual to the minors of $\Psi^{e'_1e'_2,e'_3e'_4}_{e'_5}$. Fix a Dodgson $D \in D_S$ and tree $T \in T_D$. Let $D'$ be the Dodgson in $D_{S'}$ such that the minors of this Dodgson are dual to the minors of $D$, as noted prior. Construct a tree $T'$ in $T_{D'}$ by placing $e' \in T'$ if and only if $e \notin T \cup S$. Then, by the above observations, $T'$ is a spanning tree of the Dodgson minors dual to the minors of $D$. It follows that if $S$ is non-splitting then $S'$ is non-splitting. 

Minor minimality of $G'$ follows immediately from the fact that $G' \cut e'$ is dual to $G \contract e$ and $G' \contract e'$ is dual to $G \cut e$. \end{proof}

\begin{prop} \label{cycle} Consider a planar graph $G$ and 5-configuration $S$. If there is an edge cut set of size one, two, or three in $G$ such that all edges of this cut are in $S$, then $S$ splits. Dually, if a subset of $S$ induces a cycle with one, two, or three edges, then $S$ splits. \end{prop}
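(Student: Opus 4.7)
The plan is to produce, for each of the three cut sizes, an explicit Dodgson $D \in D_S$ whose minor $\mone$ is disconnected. When $\mone$ is disconnected it admits no spanning tree, so $T_D = \emptyset$ and $D = 0$ by Proposition \ref{calcdodgson}; hence $S$ splits. The dual cycle statement will then follow immediately from planar duality (Proposition \ref{dual}) together with the standard fact that a $k$-cycle in a planar graph $G$ corresponds to a $k$-edge cut in the planar dual $G'$.

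The key bookkeeping, extracted from Remark \ref{alwayscut} together with the two Dodgson shapes in $D_S$, is as follows. For a Type A Dodgson with $|I|=|J|=2$, $I \cap J = \emptyset$, $|K|=1$, both edges of $I$ are deleted in $\mone$. For a Type B Dodgson with $|I|=|J|=3$ and $|I \cap J|=1$, the edge of $I \cap J$ is deleted in both minors while the two edges of $I \setminus J$ are deleted in $\mone$; in particular, if some three-element $C \subseteq S$ is placed as $I$, with any one edge of $C$ also placed into $J$, then all three edges of $C$ are deleted in $\mone$.

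Let $C \subseteq S$ be the edge cut of size $k \in \{1,2,3\}$. For $k=1$, writing $C=\{e\}$, choose the Type B Dodgson with $e \in I \cap J$ and the remaining four edges of $S$ split arbitrarily two-and-two between $I \setminus J$ and $J \setminus I$; the bridge $e$ is then deleted in both minors. For $k=2$, writing $C=\{c_1,c_2\}$, use the Type A Dodgson with $I = C$, partitioning the remaining three edges of $S$ between $J$ (two edges) and $K$ (one edge); both cut edges lie in $I$ and are deleted in $\mone$. For $k=3$, writing $C=\{c_1,c_2,c_3\}$ and letting $e_4, e_5$ be the other edges of $S$, use the Type B Dodgson with $I = C$ and $J = \{c_3, e_4, e_5\}$; all three cut edges again lie in $I$ and are deleted in $\mone$. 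In each case $\mone$ is $G$ with $C$ removed (followed by some contractions, which cannot reconnect), hence disconnected, so $D$ vanishes.

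For the dual statement, suppose some subset of $S$ induces a cycle of length $k \leq 3$ in $G$. Under the planar dual, this cycle corresponds to an edge cut of size $k$ in $G'$ contained in the dual 5-configuration $S'$. The cut case just proven gives that $S'$ splits in $G'$, and the tree-to-tree correspondence established inside the proof of Proposition \ref{dual} is symmetric (swapping $G$ and $G'$, delete and contract), so $S$ splits in $G$ as well. The only subtlety is matching each value of $k$ to a Dodgson type whose cardinality constraints on $I$, $J$, $K$ accommodate putting the entire cut inside $I$ (or $I \cap J$); once this is arranged, the disconnection argument and the duality transport are both immediate.
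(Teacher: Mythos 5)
Your proposal is correct and follows essentially the same strategy as the paper: choose a Dodgson in $D_S$ whose first minor deletes every edge of the cut, observe that this minor is disconnected and hence admits no spanning tree, so the Dodgson vanishes, and then transfer the cycle case via planar duality (Proposition \ref{dual}). The only cosmetic difference is that you select a different Dodgson for each cut size, whereas the paper handles all three sizes at once with the single choice $\Psi^{e_1e_2e_3,e_3e_4e_5}$, which deletes $e_1$, $e_2$, and $e_3$ in one minor regardless of how many of them form the cut.
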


\begin{proof} Let $S = \{e_1,e_2,e_3,e_4,e_5\}$. Suppose there is an edge cut set of size one (resp., two, three), and that it is specifically edge $e_1$ (resp., $e_1$ and $e_2$; $e_1$, $e_2$, and $e_3$). Take $D \in D_S$ such that the minors associated to the calculation of $D$ are $\mone^{\cut e_1e_2e_3}$ and $\mtwo^{\cut e_3}_{\contract e_1e_2}$. Namely, $D= \Psi^{e_1e_2e_3,e_3e_4e_5}$. No matter the number of edges in the cut set, the graph minor $\mone$ is not connected, so there of no spanning trees of $\mone$. Hence, $T_D = \emptyset$ and $D = 0$. 

Using planar duals, it follows from the previous paragraph along with Proposition \ref{dual} that if the 5-configuration $S$ induces a cycle with one, two, or three edges, then $S$ splits. \end{proof}

It is possible to prove this theorem in a stronger form; specifically without the restriction that the graph must be planar. Since we already know that $K_5$ and $K_{3,3}$ are forbidden minors, though, we may outright dismiss non-planar graphs as non-splitting. The theorem as stated suffices for our purposes.

\begin{theorem} \label{cutablecontractablegeneral} Let $G$ be a graph and $e \in E(G)$. Then $T_{G\cut e} \neq \emptyset$ if and only if there is a $T \in T_G$ such that $e \notin E(T)$. Similarly, $T_{G \contract e} \neq \emptyset$ if and only if there is a $T \in T_G$ such that $e \in E(T)$. \end{theorem}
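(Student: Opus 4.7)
The plan is to prove each of the two equivalences by exhibiting a natural correspondence between spanning trees on the two sides.

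For the deletion equivalence, I would observe that $V(G \cut e) = V(G)$ and $E(G \cut e) = E(G) - \{e\}$, so any subgraph $T$ of $G$ with $e \notin E(T)$ is simultaneously a subgraph of $G \cut e$ on the same vertex set. Being a spanning tree depends only on the vertex set together with the edge set (connected and acyclic, equivalently connected with $|V|-1$ edges), so $T$ is a spanning tree of $G$ if and only if it is a spanning tree of $G \cut e$. Thus the identity map gives a bijection between $\{T \in T_G : e \notin E(T)\}$ and $T_{G \cut e}$, which immediately yields the claimed equivalence.

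For the contraction equivalence, fix $e = uv$ (treating loops as a degenerate case below). Given $T \in T_G$ with $e \in E(T)$, I would let $T' = T \contract e$; this subgraph of $G \contract e$ has $|V(G)|-1 = |V(G \contract e)|$ vertices and $|V(G)|-2$ edges, and since contracting an edge preserves connectedness, $T'$ is a connected subgraph with $|V(G \contract e)|-1$ edges, hence a spanning tree of $G \contract e$. For the converse, given $T' \in T_{G \contract e}$, I would identify $E(G \contract e)$ with $E(G) - \{e\}$ and set $T = E(T') \cup \{e\} \subseteq E(G)$; this has $|V(G)|-1$ edges, and since every vertex of $G$ maps into some vertex of $G \contract e$ (with $u, v$ both mapping to the contracted vertex), connectedness of $T'$ combined with the edge $e$ linking $u$ and $v$ forces $T$ to be connected in $G$, so $T$ is a spanning tree containing $e$.

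The only real obstacle is the loop case: if $e$ is a loop then no spanning tree of $G$ contains $e$, while graphically $G \contract e$ agrees with $G \cut e$ and may well have spanning trees, so the contraction equivalence as stated requires interpretation. The paper already flags this by insisting that contraction and deletion of loops be tracked separately for the Dodgson framework, so I would handle loops by convention (for instance, declaring $T_{G \contract e}$ vacuous for loops), and otherwise the non-loop arguments above close both directions with no further work.
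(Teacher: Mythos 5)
Your proof is correct and follows essentially the same route as the paper's: the identity map handles deletion, and for contraction one passes between $T$ and $E(T)-\{e\}$ using connectedness plus the edge count $|V|-1$. Your explicit treatment of the loop case is a point the paper's proof silently skips, and your resolution by convention is consistent with the paper's earlier remark that loop contraction must be tracked separately from loop deletion.
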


\begin{proof} Suppose there is a $T \in T_G$ such that $e \notin E(T)$. The set of edges in $T$ forms a spanning tree of $G \cut e$. In the other direction, if $T_{G \cut e} \neq \emptyset$, then there is a spanning tree $T \in T_{G \cut e}$, and thus $T$ is a spanning tree of $G$ such that $e \notin E(T)$.

Now, suppose there is a $T \in T_G$ such that $e \in E(T)$. Let $T'$ be the edge set formed by $E(T) - \{e\}$. Then, as $T$ spanned $G$, $T'$ spans and is connected in $G \contract e$. Further, $|E(T)| = |V(G)| -1$, so $|E(T')| = |V(G \contract e)| -1$. Hence, $T'$ is a spanning tree of $G \contract e$. In the other direction, let $T' \in T_{G \contract e}$. Let $T$ be a subgraph of $G$ induced by the edge set $E(T') \cup \{e\}$. As $T'$ was a spanning tree, $T$ must be a connected spanning subgraph of $G$. Further, $|V(G\contract e)| = |V(G)| -1$ and $|E(T')| = |E(T)| -1$, so $T$ must be a spanning tree of $G$. \end{proof}

\begin{corollary} \label{cutablecontractable} Consider a graph $G$ and a non-splitting 5-configuration $S \subseteq E(G)$. Fix an edge $e \in E(G)$ such that $e \notin S$. The 5-configuration $S$ is still non-splitting in $G \cut e$ if for every Dodgson $D \in D_S$ there is a $T \in T_D$ such that $e \notin T$. Similarly, $S$ is still non-splitting in $G \contract e$ if for every Dodgson $D \in D_S$ there is a $T \in T_D$ such that $e \in T$. \end{corollary}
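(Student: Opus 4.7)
The plan is to reduce the corollary to two applications of Theorem \ref{cutablecontractablegeneral}, one for each of the two minors $\mone$ and $\mtwo$ attached to each Dodgson $D \in D_S$, and then quantify over all thirty Dodgsons to get non-splittingness of $S$.

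First I would recall that, by Proposition \ref{calcdodgson}, a Dodgson $D \in D_S$ vanishes precisely when $T_D = \emptyset$, i.e.\ when there is no edge set in $E(G) - S$ that is simultaneously a spanning tree of both associated minors $\mone$ and $\mtwo$. Consequently, $S$ is non-splitting in a graph $H$ exactly when, for every $D \in D_{S}^{H}$, the corresponding tree set $T_D$ (computed in $H$) is non-empty. So to prove $S$ is non-splitting in $G \cut e$ (respectively $G \contract e$), it suffices to produce, for each of the thirty Dodgsons, one common spanning tree of its two minors in the deleted (respectively contracted) graph.

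Next I would observe the elementary bookkeeping fact that, because $e \notin S$, the edge $e$ is neither deleted nor contracted in forming $\mone$ or $\mtwo$ from $G$. Hence the two minors associated to $D$ in $G \cut e$ are literally $\mone \cut e$ and $\mtwo \cut e$, and the minors associated to $D$ in $G \contract e$ are $\mone \contract e$ and $\mtwo \contract e$. This is the only place where the hypothesis $e \notin S$ is needed; it guarantees that deletion/contraction of $e$ commutes with passage to the Dodgson minors.

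With this in hand the corollary is immediate from Theorem \ref{cutablecontractablegeneral}. For the deletion statement, pick any $D \in D_S$ and, by hypothesis, a $T \in T_D$ with $e \notin T$; then $T$ is a spanning tree of each of $\mone$ and $\mtwo$ that avoids $e$, so by the theorem $T$ is a spanning tree of both $\mone \cut e$ and $\mtwo \cut e$, hence $T \in T_D$ computed in $G \cut e$. For the contraction statement, take $T \in T_D$ with $e \in T$; then $T - \{e\}$ is, by the theorem, a spanning tree of both $\mone \contract e$ and $\mtwo \contract e$, hence lies in $T_D$ computed in $G \contract e$. Since the argument applies to every $D \in D_S$, no Dodgson vanishes in the new graph, so $S$ remains non-splitting. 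There is no real obstacle here; the only thing to be careful about is the indexing remark that the edges deleted/contracted in constructing the minors from $G$ are determined by $S$, so $e \notin S$ is exactly what allows $e$ to be safely removed or contracted at the outer level without disturbing the inner structure.
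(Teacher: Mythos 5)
Your proposal is correct and follows essentially the same route as the paper, which likewise deduces the corollary by applying Theorem \ref{cutablecontractablegeneral} to the common spanning trees of the minors $\mone$ and $\mtwo$ for each Dodgson $D \in D_S$. Your additional observation that $e \notin S$ ensures deletion/contraction of $e$ commutes with forming the Dodgson minors is a useful piece of bookkeeping that the paper leaves implicit.
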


\begin{proof} The proof of this Corollary follows immediately from Theorem \ref{cutablecontractablegeneral}, taking trees in $T_D$ for a Dodgson $D$ and applying the theorem to the common spanning trees of minors $\mone$ and $\mtwo$. \end{proof}

As a consequence of this, if a graph $G$ splits, then for every edge $e \in E(G)$, the graphs $G \cut e$ and $G \contract e$ both split. Thus, Theorem \ref{minorclosedproperty} follows immediately; splitting is a minor closed property.

Corollary \ref{cutablecontractable} will be useful in constructing arguments around minor-minimality. Specifically, if it can be shown for a non-splitting 5-configuration $S$ that every Dodgson $D \in D_S$ has a tree $T \in T_D$ that includes (resp., does not include) an edge $e \in E(G) - S$, then $S$ is still non-splitting in $G \contract e$ (resp., $G \cut e$).

\begin{corollary} \label{loopfree} A minor-minimal non-splitting graph $G$ must be loop-free. \end{corollary}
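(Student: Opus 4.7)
The plan is to derive a contradiction from assuming $G$ is minor-minimal non-splitting yet contains a loop $e$. Let $S$ be a non-splitting 5-configuration of $G$, whose existence is guaranteed by $G$ being non-splitting. I will split into two cases depending on whether $e$ lies in $S$, and dispatch each using a different tool.

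First suppose $e \notin S$. The key observation is that spanning trees of a graph can never contain loops. Consequently, for every Dodgson $D \in D_S$ and every tree $T \in T_D$, the edge $e$ is automatically absent from $T$. Corollary \ref{cutablecontractable} then yields directly that $S$ remains a non-splitting 5-configuration of $G \cut e$. Because $G$ was assumed minor-minimal non-splitting, the graph $G \cut e$ ought to split, and this contradiction closes the first case.

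Now suppose $e \in S$. I will produce a Dodgson in $D_S$ that is identically zero, thereby showing $S$ splits and contradicting the choice of $S$. Partition $S \setminus \{e\}$ into two 2-element subsets $I$ and $J$, and consider the Dodgson $D = \Psi^{I,J}_{\{e\}} \in D_S$. Going back to the matrix definition, examine the row of $M_G(I,J)_{\{e\}}$ indexed by $e$. In the diagonal $A$-block, every off-diagonal entry is zero by construction and the single diagonal entry $\alpha_e$ has been set to $0$ because $e \in K$. In the incidence block $\hat{\xi}_G$, the row for $e$ is zero because $e$ is a loop: the source-vertex contribution $+1$ and target-vertex contribution $-1$ occur in the same column and cancel. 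Thus the entire $e$-row of $M_G(I,J)_{\{e\}}$ vanishes, so $\det M_G(I,J)_{\{e\}} = 0$ and $D = 0$, as desired.

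The main subtlety is the second case. A naive application of the spanning-tree formula of Proposition \ref{calcdodgson} can be misleading here, since contracting the loop $e$ in the two minors associated to $D$ simply deletes it, and the resulting minors may well admit common spanning trees; the argument must instead return to the underlying matrix determinant, where the vanishing of the loop row is immediate. With both cases producing contradictions, $G$ can contain no loop.
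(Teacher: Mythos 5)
Your proof is correct, and the case split you make is exactly the right one, but the way you dispatch the case $e \in S$ is genuinely different from the paper. The paper handles that case in one line by invoking Proposition \ref{cycle}: a loop is a cycle with one edge, so any 5-configuration containing it splits. That proposition, however, is proved by planar duality (a loop dualizes to a bridge, whose deletion disconnects one of the Dodgson minors) and is only stated for planar graphs, a restriction the paper acknowledges but never formally removes. Your argument instead goes back to the matrix definition and observes that the row of $M_G(I,J)_{\{e\}}$ indexed by the loop $e$ vanishes identically, once $\alpha_e$ is set to zero and the incidence row of a loop is recognized as zero. This is more self-contained, works without any planarity hypothesis, and — as you rightly point out — correctly sidesteps the trap in Proposition \ref{calcdodgson}, where "contracting" a loop in the two minors would merely delete it and could leave common spanning trees; that observation is a genuine subtlety the paper never has to confront because it routes through duality. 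Your first case ($e \notin S$) is identical to the paper's main argument: a loop lies in no spanning tree, so Corollary \ref{cutablecontractable} lets you delete it while preserving non-splittingness, contradicting minor-minimality.
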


\begin{proof} By Proposition \ref{cycle} a loop may not appear in a non-splitting 5-configuration. Trivially, a loop will never appear in a tree for any Dodgson. By Corollary \ref{cutablecontractable}, if a loop $e$ is present in a non-splitting graph it can always be deleted and $G \cut e$ will still be non-splitting. \end{proof}

\begin{prop} \label{doubleedge} Let $G$ be a planar minor-minimal non-splitting graph. For distinct vertices $u,v \in V(G)$, parallel edges $e_1$ and $e_2$ incident with vertices $u$ and $v$ may only occur if every non-splitting 5-configuration contains precisely one of the edges $\{e_1,e_2\}$. \end{prop}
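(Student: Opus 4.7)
The plan is to establish the two halves of the statement separately. That no non-splitting $5$-configuration can contain both $e_1$ and $e_2$ is immediate from Proposition \ref{cycle}: the pair $\{e_1,e_2\}$ induces a $2$-edge cycle in the planar graph $G$, so any $5$-configuration containing both would split. The real content is the other direction, namely that every non-splitting $5$-configuration $S$ contains at least one of $e_1, e_2$.

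I would argue this by contradiction. Suppose $S \subseteq E(G)$ is a non-splitting $5$-configuration with $e_1, e_2 \notin S$. The goal is to show that $S$ is still non-splitting in $G \cut e_1$, which contradicts minor-minimality. By Corollary \ref{cutablecontractable}, it suffices to exhibit, for every Dodgson $D \in D_S$, some $T \in T_D$ with $e_1 \notin T$.

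Fix such a $D$ with associated minors $\mone$ and $\mtwo$; since $S$ is non-splitting, $T_D \neq \emptyset$, so pick any $T \in T_D$. Because the only edges deleted or contracted in forming $\mone$ and $\mtwo$ lie in $S$, both $e_1$ and $e_2$ survive as edges in each minor. In each of $\mone, \mtwo$ the endpoints of $e_1$ and $e_2$ coincide, so either $e_1, e_2$ are parallel edges between the images of $u$ and $v$, or (if contractions in $S$ identify $u$ with $v$) both become loops. In either case, no spanning tree of the minor contains both edges. If $T$ already avoids $e_1$, I am done; otherwise $e_1 \in T$, which forces $e_1$ to be a non-loop in both minors (loops cannot appear in any spanning tree), hence parallel to $e_2$ in both. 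Then $T' := (T \setminus \{e_1\}) \cup \{e_2\}$ is again a common spanning tree of $\mone$ and $\mtwo$ by the standard parallel-edge swap, so $T' \in T_D$ and $e_1 \notin T'$, as required. Applying this for every $D \in D_S$ yields the desired contradiction.

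The main obstacle I expect is simply the bookkeeping of how $e_1$ and $e_2$ sit inside the six or eight possible configurations of $\mone$ and $\mtwo$ (loop versus parallel, and possibly asymmetric between the two minors). The reason this obstacle is mild is that loops are categorically excluded from spanning trees, so whenever $e_1 \in T$ we are automatically in the case where $e_1$ is a non-loop parallel to $e_2$ in both minors, and the swap works uniformly without any case analysis.
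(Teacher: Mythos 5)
Your proposal is correct and follows essentially the same route as the paper: rule out both edges lying in $S$ via Proposition \ref{cycle}, and otherwise use the parallel-edge swap $T' = (T - \{e_1\}) \cup \{e_2\}$ together with Corollary \ref{cutablecontractable} to delete $e_1$ and contradict minor-minimality. Your extra care with the loop-versus-parallel cases in the minors is a minor refinement the paper leaves implicit, but the argument is the same.
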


\begin{proof} Suppose that there is a non-splitting 5-configuration $S$ that contains neither edge $e_1$ nor $e_2$. For any Dodgson $D \in D_S$ and any tree $T \in T_D$,  $T$ may trivially contain at most one of these edges. If $T$ contains precisely one, the choice of which edge is arbitrary, and as such if $e_1$ is in $T$, there is a tree $T' \in T_D$ such that $T' = (T -\{e_1\}) \cup \{e_2\}$. By Corollary \ref{cutablecontractable} we may delete $e_1$, contradicting minimality.

Both $e_1$ and $e_2$ may not appear in the 5-configuration, as the 5-configuration would induce a cycle with two edges and by Proposition \ref{cycle} this 5-configuration would split. \end{proof}

\begin{corollary} \label{degreetwo} In a planar minor-minimal graph with a non-splitting 5-configuration $S$, a vertex of degree two must be incident with precisely one edge in the 5-configuration. Further, suppose the vertex of degree two is incident with edges $e_1$ and $e_2$. If $e_1 \in S$, then the 5-configuration $(S \cup \{e_2\}) - \{e_1\}$ is also non-splitting. \end{corollary}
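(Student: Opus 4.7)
Let $v$ be the degree-two vertex with incident edges $e_1 = uv$ and $e_2 = vw$ (allowing possibly $u = w$). I address the two assertions in turn.

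For the first assertion, I rule out the two forbidden cases separately. Suppose $\{e_1, e_2\} \subseteq S$. If the edges are parallel, $u = w$, then $\{e_1, e_2\}$ forms a $2$-cycle inside $S$ and Proposition \ref{cycle} gives a contradiction; otherwise, writing $S = \{e_1, e_2, e_3, e_4, e_5\}$, consider the Dodgson $\Psi^{e_1 e_2, e_3 e_4}_{e_5}$. In its first minor $\mone$ both $e_1$ and $e_2$ are deleted, so $v$ becomes isolated and $\mone$ is disconnected, hence $T_D = \emptyset$ and the Dodgson vanishes, again contradicting non-splitting. Suppose instead $\{e_1, e_2\} \cap S = \emptyset$. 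I apply Corollary \ref{cutablecontractable} to derive that $S$ is still non-splitting in $G \contract e_1$, contradicting minor-minimality. Fix any $D \in D_S$ and any $T \in T_D$. Since $e_1, e_2 \notin S$, neither is deleted or contracted in $\mone$ or $\mtwo$, and $v$ remains a degree-two vertex in each minor; thus $T$ must contain at least one of $e_1, e_2$. If $e_1 \in T$ we are done; otherwise $e_2 \in T$ and the set $T' = (T \setminus \{e_2\}) \cup \{e_1\}$ is again a common spanning tree of $\mone$ and $\mtwo$, because removing $e_2$ isolates $v$ as its own component of $T$ and inserting $e_1 = uv$ reconnects $v$ to the component containing $u$.

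For the second assertion, the first part gives $e_2 \notin S$, so $S' = (S \setminus \{e_1\}) \cup \{e_2\}$ is a valid $5$-configuration. For each Dodgson $D \in D_S$, let $D' \in D_{S'}$ be the analogous Dodgson in which $e_2$ takes the role $e_1$ had in $D$ (i.e.\ lies in the same one of $I \setminus J$, $J \setminus I$, $I \cap J$, or $K$) and the other four edges of $S \cap S'$ keep their roles. Define $\phi \colon T_D \to T_{D'}$ by $\phi(T) = (T \setminus \{e_2\}) \cup \{e_1\}$ when $e_2 \in T$ and $\phi(T) = T$ otherwise. I claim $\phi$ is a bijection. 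The justification is that in each role case the minors of $D'$ are isomorphic to those of $D$ via the relabelling $e_1 \leftrightarrow e_2$ at $v$: when the active edge is deleted, the other edge of $\{e_1, e_2\}$ becomes the unique pendant at $v$ and so must sit in every spanning tree; when the active edge is contracted, the degree-two vertex $v$ is suppressed and yields the same underlying graph regardless of which incident edge is contracted. Hence $\phi$ sends common spanning trees to common spanning trees with obvious inverse, so $D = 0$ if and only if $D' = 0$, and $S$ non-splitting implies $S'$ non-splitting.

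The main obstacle is the case analysis supporting the bijection in the third paragraph: one must separately verify the minor isomorphism and tree correspondence for each of the four possible roles of $e_1$ in $D$. Each verification is a direct but tedious check of how deletion or contraction at $v$ interacts with the other deletions and contractions imposed by the Dodgson, and once all four are dispatched the rest of the argument reduces to the short tree-swap and Dodgson-vanishing observations above, each guided by Proposition \ref{cycle} and Corollary \ref{cutablecontractable}.
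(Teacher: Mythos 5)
Your proof is correct, but it takes a genuinely different route from the paper. The paper disposes of this corollary in one line by planar duality: a degree-two vertex is dual to a pair of parallel edges, so the statement is just the dual (via Proposition \ref{dual}) of Proposition \ref{doubleedge} and its proof. You instead argue directly in the primal graph: the ``at most one'' half via a vanishing Dodgson (the two edges at $v$ form a $2$-edge-cut inside $S$, which is exactly the edge-cut case of Proposition \ref{cycle}), the ``at least one'' half via the pendant-swap of $e_1$ and $e_2$ in common spanning trees followed by Corollary \ref{cutablecontractable} and contraction of $e_1$, and the second assertion via an explicit tree bijection $T_D \to T_{D'}$ justified by the observation that deleting either edge at $v$ forces the other as a pendant, while contracting either edge suppresses $v$ and yields isomorphic minors with $e_1 \leftrightarrow e_2$. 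All of these steps check out; the one place that deserved the care you gave it is the contraction case, where $G \contract e_1 \cong G \contract e_2$ precisely because $v$ has degree exactly two. What your approach buys is that it never uses planarity --- the paper itself remarks that the corollary holds without that restriction but declines to prove it --- and it makes the second assertion explicit, which in the paper is only implicit in the symmetry of the dual parallel edges. What the paper's approach buys is brevity: given Propositions \ref{dual} and \ref{doubleedge}, there is nothing left to verify.
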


\begin{proof} This follows immediately, using planar duals. \end{proof}

As with Proposition \ref{cycle}, it is possible to prove the previous theorem and corollary without restricting to planar graphs. Again, though, this is not needed for our purposes.

\begin{prop} For a minor-minimal non-splitting graph $G$ there can be at most two parallel edges incident with the same vertex pair. \end{prop}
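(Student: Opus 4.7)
The plan is to argue by contradiction: assume $G$ contains three parallel edges $e_1, e_2, e_3$ between a common vertex pair $\{u, v\}$, and derive a contradiction with minor-minimality. Since $G$ is not simple, $G$ is neither $K_5$ nor $K_{3,3}$; the discussion following the five-invariant calculations for $K_5$ and $K_{3,3}$ establishes that any other minor-minimal non-splitting graph is planar, so $G$ is planar and Proposition \ref{doubleedge} is available.

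Because $G$ is non-splitting, it admits at least one non-splitting 5-configuration $S$. I would apply Proposition \ref{doubleedge} to each of the three pairs $\{e_i, e_j\}$ with $1 \le i < j \le 3$, each of which is a pair of parallel edges joining $u$ and $v$. The conclusion forces $|S \cap \{e_i, e_j\}| = 1$ for each pair. Summing the three counts over the three pairs gives
$$|S \cap \{e_1,e_2\}| + |S \cap \{e_1,e_3\}| + |S \cap \{e_2,e_3\}| = 2\, |S \cap \{e_1, e_2, e_3\}|,$$
so the left side must simultaneously equal $3$ and be even, which is impossible. This contradicts the existence of a non-splitting 5-configuration in $G$.

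The main obstacle is the preliminary bookkeeping step of confirming we may invoke Proposition \ref{doubleedge}, since that proposition is stated only for planar $G$; we rely on the earlier observation that $K_5$ and $K_{3,3}$ exhaust the non-planar minor-minimal non-splitting graphs. A variant that avoids invoking planarity would instead re-run the tree-swap argument from the proof of Proposition \ref{doubleedge} directly: fix any non-splitting $S$, use Proposition \ref{cycle} (2-cycle case) to see at most one of $e_1, e_2, e_3$ lies in $S$, pick two indices $i, j$ with $e_i, e_j \notin S$, and observe that in each minor associated with any $D \in D_S$ the edges $e_i, e_j$ appear as interchangeable parallel edges, so any $T \in T_D$ containing $e_i$ admits the swap $T' = (T \cut \{e_i\}) \cup \{e_j\} \in T_D$. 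Corollary \ref{cutablecontractable} then lets us delete $e_i$ while preserving non-splittingness of $S$, again contradicting minor-minimality.
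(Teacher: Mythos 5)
Your proposal is correct, and your primary argument takes a genuinely different route from the paper's. The paper's own proof is essentially your fallback variant: it invokes Proposition \ref{cycle} to conclude that at most one of the three parallel edges can lie in any non-splitting 5-configuration, and then reruns the tree-swap from the proof of Proposition \ref{doubleedge} on two parallel edges lying outside the configuration, deleting one of them via Corollary \ref{cutablecontractable} and contradicting minor-minimality. Your primary argument instead applies the \emph{statement} of Proposition \ref{doubleedge} as a black box to each of the three pairs $\{e_i,e_j\}$ and extracts a parity contradiction, since $\sum_{i<j}|S\cap\{e_i,e_j\}| = 2\,|S\cap\{e_1,e_2,e_3\}|$ is even but would have to equal $3$. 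What this buys you is a proof with no new tree manipulations whatsoever; what it costs is an explicit dependence on the planarity hypothesis of Proposition \ref{doubleedge}. Your justification for that hypothesis --- a minor-minimal non-splitting graph with parallel edges cannot be $K_5$ or $K_{3,3}$ and hence must be planar --- is the same reduction the paper itself relies on implicitly, since both Proposition \ref{cycle} and Proposition \ref{doubleedge} are stated only for planar graphs even though the proposition at hand is not. Both routes are sound.
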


\begin{proof} Suppose there are at least three parallel edges $e_1,e_2,e_3 \in E(G)$. By Proposition \ref{cycle}, at most one of these edges may appear in any non-splitting 5-configuration. If precisely one of these edges is in a 5-configuration, then as in the proof of Proposition \ref{doubleedge}, one of the parallel edges not in the 5-configuration could be deleted. If none of these edges are in the 5-configuration, it also follows that all but one of these edges could be deleted. In any case, this contradicts minor-minimality. \end{proof}

A result of this theorem is that, when looking at minor-minimal non-splitting graphs without the restriction that the graph be simple, we need only consider at most double edges. Further, there may be at most five double edges in a minor-minimal non-splitting graph. By Corollary \ref{loopfree}, the minor-minimal non-splitting graphs must be loop-free. We may as such greatly restrict the appearance of multigraphs when considering minor-minimal non-splitting graphs.

%
%

\chapter{Non-Splitting Families}
\label{delta-y}

In this chapter, we examine the $\Delta$-Y transformation and the resulting $\Delta$-Y families of graphs. Under circumstances to be discussed, this transformation preserves non-splittingness. We also introduce the graphs $O$, $H$, and $C$, seen in Figure \ref{figbigfamily}. These are 3-connected minor-minimal non-splitting graphs that differ from each other by $\Delta$-Y transformations.

\begin{definition} A \emph{$\Delta$-Y transformation} on a graph $G$ is an exchange of an induced $K_3$ for a vertex incident with the vertices that formed the $K_3$, or a vertex of degree three for a $K_3$ contained in the neighbours of this vertex. \end{definition}

\begin{figure}[h]
  \centering
    \includegraphics[scale=1.0]{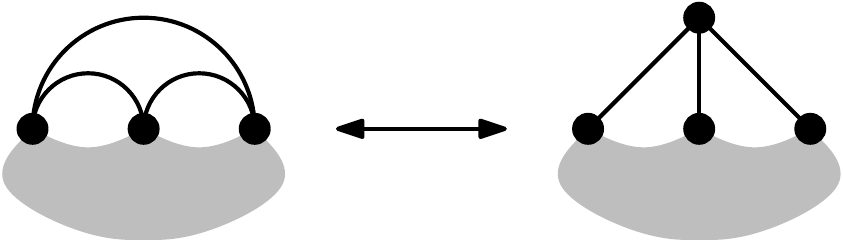}
  \caption{A $\Delta$-Y transformation.}
\label{deltay}
\end{figure}

Note that we do not specify the direction of this transformation. We will refer to either exchange as a $\Delta$-Y transformation. Cases in which specific direction is important will be addressed when needed. In any situation where vertex or edge labellings are used, we will be keeping the labellings consistent in graphs that differ by a $\Delta$-Y transformation whenever possible. That is, every vertex or edge not involved in the transformation will be given the same label in both graphs. The labels for the vertex and edges in the transformation will be given separately.

It is possible that a $\Delta$-Y  transformation will create a non-simple graph. While we have found a number of minor minimal non-splitting graphs that contain parallel edges or vertices of degree two, in Chapter \ref{trisub} we will restrict ourselves to simple, three-connected graphs.

The $\Delta$-Y transformation first appeared as a method of simplifying electrical networks. Also key to this simplification is the graph theoretic ideas of series and parallel reductions. Finding a home in graph theory, a graph $G$ is said to be $\Delta$-Y reducible if $G$ can be reduced to a single vertex using only $\Delta$-Y transformations and loop, degree-one, series, and parallel reductions. A famous result proves that planar graphs are $\Delta$-Y reducible (\cite{epifanov}). Interestingly, it was proved in \cite{truemper} that the class of $\Delta$-Y reducible graphs is minor closed. It remains an open problem to find the complete set of forbidden minors, and currently over sixty-eight billion forbidden minors have been found (\cite{yu}).

A \emph{$\Delta$-Y family} $\mathcal{F}$ is a set of multigraphs that is closed under the $\Delta$-Y operation. By construction, a $\Delta$-Y family is an equivalence class on all multigraphs, and further on multigraphs with a fixed number of edges. These families are of particular interest to us, since we do not discard parallel edges.

Our interests of course limit us to considering $\Delta$-Y families that contain minor-minimal non-splitting graphs. Figures \ref{figbigfamily} and \ref{regfamily} show collections of non-splitting graphs in two such $\Delta$-Y families. Note that both families are closed under planar duals. Not all possible $\Delta$-Y transformations are included in these figures, as not all transformations produce a graph that has a non-splitting 5-configuration or is minor-minimal non-splitting. For example, the transformation in Figure \ref{figk33minor} clearly produces a graph with a $K_{3,3}$ minor. The transformation in Figure \ref{fignotbad} produces a graph that splits for every 5-configuration. The graphs $K_5$, $K_{3,3}$, and those in Figures \ref{figbigfamily} and \ref{regfamily} are currently all known minor-minimal non-splitting graphs.

\begin{figure}[h]
  \centering
    \includegraphics[scale=1.0]{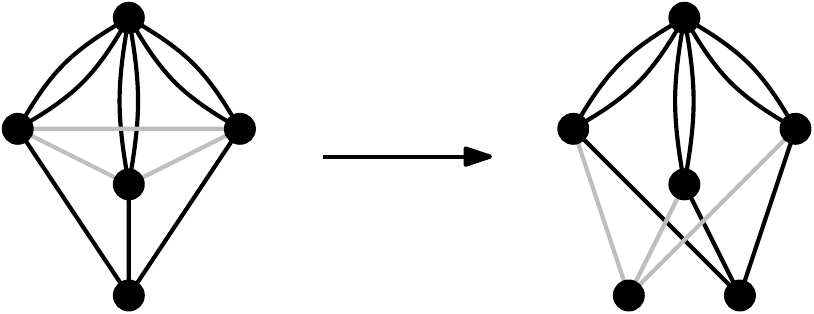}
  \caption{A $\Delta$-Y transformation from $Q$ (from Figure \ref{figbigfamily}) that produces a graph that has non-splitting 5-configurations but is not minor-minimal.}
\label{figk33minor}
\end{figure}

\begin{figure}[h]
  \centering
    \includegraphics[scale=1.0]{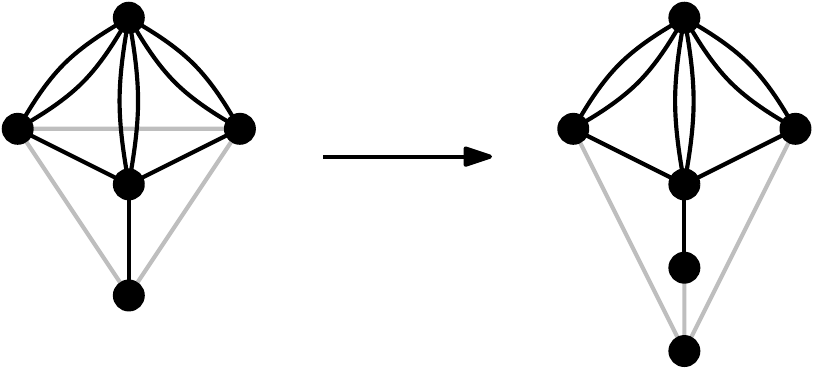}
  \caption{A $\Delta$-Y transformation from $Q$ that produces a graph that splits for all 5-configurations.}
\label{fignotbad}
\end{figure}

The full $\Delta$-Y family of the graphs in Figure \ref{figbigfamily} actually contains 15 non-isomorphic multigraphs (shown in full in Figure \ref{fullfamily}). The family containing the graphs in Figure \ref{regfamily} further contains 191 non-isomorphic multigraphs. This family as drawn has many intermediate multigraphs excluded that split or are not minor-minimal non-splitting. Both $K_5$ and $K_{3,3}$ have large $\Delta$-Y families, but are the only minor-minimal non-splitting graphs in their respective families. Specifically, the $\Delta$-Y family that includes $K_5$ has 361 non-isomorphic multigraphs. The family that includes $K_{3,3}$ has 123 non-isomorphic multigraphs.

Francis Brown found minor-minimal non-splitting graphs $K_5$, $K_{3,3}$, and $H$ in \cite{monster}. All others have been found since by Karen Yeats, Samson Black, and myself. Note that the graph $O$ is the octahedron and $C$ the cube.

\begin{theorem} \label{deltaysplits} Let $G^\Delta$ and $G^Y$ be graphs that differ by a $\Delta-Y$ transformation. Fix a 5-configuration $S$ that does not include any edges in the $\Delta-Y$, and as such is shared between the two graphs. Then $S$ splits in $G^\Delta$ if and only if it splits in $G^Y$.\end{theorem}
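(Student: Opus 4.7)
Fix an arbitrary Dodgson $D \in D_S$ and label the $\Delta$ of $G^\Delta$ with edges $t_1, t_2, t_3$ forming a triangle on vertices $a, b, c$; label the corresponding $Y$ in $G^Y$ with new vertex $v$ and edges $s_1 = va$, $s_2 = vb$, $s_3 = vc$. Let $H$ denote the common ``outside'' of the transformation, so $G^\Delta = H \cup \{t_1, t_2, t_3\}$ and $G^Y$ is $H$ together with $v$ and $\{s_1, s_2, s_3\}$. Because $S$ contains none of these six edges, the two minors $\mone, \mtwo$ used to compute $D$ each contain the entire $\Delta$ (respectively $Y$), and on the $H$-side they reduce to the same pair of subminors of $H$, say $H_1, H_2$, whether one starts from $G^\Delta$ or from $G^Y$. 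My plan is to prove $T_D^{G^\Delta} = \emptyset \iff T_D^{G^Y} = \emptyset$; by the no-cancellation observation following Proposition \ref{calcdodgson}, this implies $D_{G^\Delta} = 0 \iff D_{G^Y} = 0$, and applying this to every $D \in D_S$ yields the theorem.

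In either setting every $T \in T_D$ splits uniquely as $T = F \sqcup X$, where $X$ is the intersection of $T$ with the transformation edges and $F \subseteq E(H)$. A short case check (spanning plus acyclicity in each of $\mone, \mtwo$) shows that $F$ must be a common spanning forest of $H_1$ and $H_2$ every component of which meets $\{a, b, c\}$, thereby inducing a partition $\pi(F)$ of $\{a, b, c\}$ with $k = k(F) \in \{1, 2, 3\}$ blocks. For such an $F$, the valid extensions $X$ are precisely the spanning trees of the multigraph obtained by contracting the blocks of $\pi(F)$ inside the $\Delta$ or $Y$. In $G^\Delta$ this quotient has $k$ vertices and sits inside $K_3$, producing $1, 2, 3$ choices of $X$ for $k = 1, 2, 3$ respectively; in $G^Y$ it has $k + 1$ vertices (the $k$ blocks together with $v$) and is a star centred at $v$, producing $3, 2, 1$ choices respectively.

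Let $N_k$ count the common spanning forests $F$ of $H_1, H_2$ with $k(F) = k$. These three nonnegative integers depend only on the shared data $H_1, H_2$ and are therefore the same for $G^\Delta$ and $G^Y$. Consequently
\[
|T_D^{G^\Delta}| \;=\; N_1 + 2N_2 + 3N_3 \qquad \text{and} \qquad |T_D^{G^Y}| \;=\; 3N_1 + 2N_2 + N_3.
\]
Each is a strictly positive combination of the nonnegative $N_k$, so both sums vanish iff $N_1 = N_2 = N_3 = 0$, giving $T_D^{G^\Delta} = \emptyset \iff T_D^{G^Y} = \emptyset$. The main obstacle will be the case analysis in the middle paragraph: verifying that spanning-tree-ness of $F \cup X$ in each of $\mone, \mtwo$ reduces cleanly to spanning-tree-ness of the block quotient, so that the $X$-counts really come out as the $(1,2,3)$ versus $(3,2,1)$ pattern. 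Once this is in place the conclusion is immediate from the linear-combination identity above.
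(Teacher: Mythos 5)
Your overall strategy is sound and genuinely different from the paper's. The paper proves the equivalence by explicit tree surgery: given $T\in T_{D^\Delta}$ it keeps $T\cap E(H)$ and replaces the $0$, $1$, or $2$ triangle edges of $T$ by $1$, $2$, or $3$ star edges respectively (and conversely), checking directly that the result spans both minors. Your version factors every tree as $F\sqcup X$ and transfers nonemptiness through a count of extensions; this is arguably more conceptual and, if the counting were exact, would give the stronger statement $|T_{D^Y}|=3N_1+2N_2+N_3$ versus $|T_{D^\Delta}|=N_1+2N_2+3N_3$, which the paper's bijection-free argument does not provide.

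There is, however, a real flaw in the middle paragraph, exactly where you anticipated trouble. The forest $F$ does not induce a single partition of $\{a,b,c\}$: it induces one partition $\pi_1(F)$ in the minor $H_1$ and another, $\pi_2(F)$, in $H_2$, because the two minors identify vertices differently (different $S$-edges are contracted in each). One can show $\pi_1$ and $\pi_2$ have the same number of blocks $k$ (both equal $|V(H_1)|-|F| = |V(H_2)|-|F|$ when every component meets $\{a,b,c\}$), but they need not be equal; e.g.\ $\pi_1 = ab\,|\,c$ and $\pi_2 = ac\,|\,b$ is achievable. In that case the valid $X$ are the \emph{common} spanning trees of two different quotients, and the counts are not $(1,2,3)$ and $(3,2,1)$: for the example just given the $\Delta$ admits only $\{bc\}$ and the $Y$ only $\{vb,vc\}$, one extension each rather than two. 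So the displayed linear-combination identity is false in general. The qualitative conclusion can be rescued: a short check over the (few) pairs of equal-size partitions of a three-element set shows that the set of common spanning trees of the two triangle quotients is nonempty exactly when the set for the two star quotients is (indeed both are always nonempty for any admissible $F$), which is all you need for $T_{D}^{G^\Delta}=\emptyset \iff T_D^{G^Y}=\emptyset$. The paper's replacement rule avoids this issue entirely because it is keyed to which transformation edges lie in $T$, not to the partition, and so works uniformly in both minors at once. Either restate your argument at the level of "nonempty iff nonempty" per forest $F$ with the two-partition case analysis included, or adopt the paper's explicit substitution.
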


\begin{proof} Consider the graphs $G^\Delta$ and $G^Y$ with vertices and edges labelled as in Figure \ref{mindeltay1}. We will prove this theorem using the contrapositive, showing that $S$ is a non-splitting 5-configuration in $G^\Delta$ if and only if it is a non-splitting 5-configuration in $G^Y$. To do this, we will create trees in the minors associate with Dodgsons in $G^\Delta$ from trees in Dodgsons associated to $G^Y$ and similarly trees in Dodgsons associated to $G^Y$ from trees in Dodgsons associated to $G^\Delta$.

Suppose $S$ is a non-splitting 5-configuration in $G^\Delta$. Fix an arbitrary Dodgson $D^\Delta \in D_S^{G^\Delta}$ and a spanning tree $T \in T_{D^\Delta}$. Let $D^Y \in D_S^{G^Y}$ be the Dodgson corresponding to the same edge sets $I,J,K \subset E(G^\Delta)$ as $D^\Delta$. Create an edge set $T' \subseteq E(G^Y)$ as follows;

\begin{enumerate}
\item Set $T' := T \cap \left( E(G^{\Delta}) - \{e_1,e_2,e_3 \} \right)$
\item If $T$ contains no edges in the delta, then $T' := T' \cup \{f_1\}$
\item If $T$ contains precisely one edge in the delta, say $\{v_i,v_j\}$, then $T' := T' \cup \{v,v_i\} \cup \{v,v_j\}$
\item If $T$ contains any two edges of the delta, then include all of $f_1,f_2,f_3$ in $T'$
\end{enumerate}

\noindent As it would induce a cycle, $T$ may not contain all of the edges of the delta. In all cases, $T'$ is connected and spans the minors $\mone$ and $\mtwo$ of $D^Y$, and so $T' \in T_{D^Y}$. Thus, if $S$ is non-splitting in $G^\Delta$, then $S$ is non-splitting in $G^Y$.

Similarly, suppose $S$ is a non-splitting 5-configuration in $G^Y$. For any arbitrary Dodgson $D^Y \in D^Y_S$, fix a tree $T \in T_{D^Y}$. Again, let $D^\Delta$ be the Dodgson corresponding to the same edge sets in $G^\Delta$ and construct $T' \subseteq E(G^\Delta)$ from $T$ as follows;

\begin{enumerate}
\item Set $T' := T \cap \left( E(G^Y) - \{f_1,f_2,f_3\} \right) $
\item If precisely one of $f_1$, $f_2$, or $f_3$ is in $T$, then take no edges in the delta in $T'$
\item If two edges in the $Y$ are in $T$, say $\{v,v_i\}$ and $\{v,v_j\}$, then $T' := T' \cup \{v_i,v_j\}$
\item If all three edges in the $Y$ are in $T$, then $T' := T' \cup \{e_1,e_2\}$ 
\end{enumerate}

\noindent As $T$ must generate a spanning tree in the minors associated with $D$, at least one edge incident with $v$ must be in $T$. As before, the construction of $T'$ produces a spanning tree of the minors $\mone$ and $\mtwo$ of $D^\Delta \in D_S^\Delta$. Hence, if $S$ is non-splitting in $G^Y$, then $S$ is non-splitting in $G^\Delta$. \end{proof}

\begin{figure}[h]
  \centering
    \includegraphics[scale=1.0]{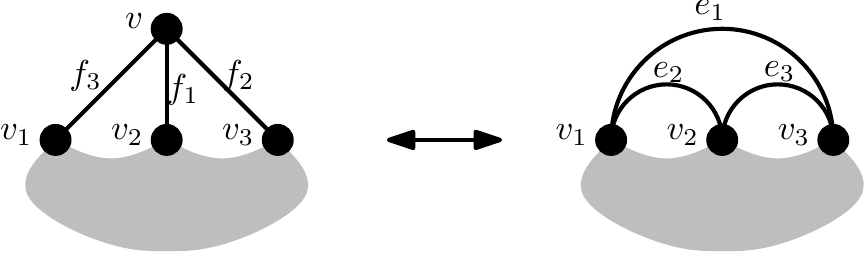}
  \caption{A graph that is minor minimal for a particular 5-configuration, but not minor-minimal non-splitting.}
\label{mindeltay1}
\end{figure}

\begin{theorem} \label{deltayminimal} Let $G^\Delta$ and $G^Y$ be two planar graphs that differ by a $\Delta$-Y transformation, and $S$ a non-splitting 5-configuration that does not include any edges in the $\Delta$-Y. Then $G^Y$ is minor-minimal non-splitting with regards to 5-configuration $S$ if and only if $G^\Delta$ is. \end{theorem}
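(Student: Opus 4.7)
The plan is to leverage Theorem~\ref{deltaysplits} for non-splitting and then verify the minor-minimality condition edge by edge, relying on a spanning-tree bijection across a single subdivided edge to handle the only cases that are not immediate. Since $S$ avoids the $\Delta$-$Y$ edges, Theorem~\ref{deltaysplits} at once gives that $S$ is non-splitting in $G^\Delta$ iff in $G^Y$. So I need only argue, for each $e \notin S$, that $G^Y \cut e$ and $G^Y \contract e$ split with respect to $S$ precisely when the analogous minors of $G^\Delta$ do, and I will handle this case by case according to whether $e$ lies in the $\Delta$-$Y$.

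For an edge $e$ outside the $\Delta$-$Y$, the minors $G^\Delta \cut e$ and $G^Y \cut e$ still differ by the very same $\Delta$-$Y$ transformation (with $S$ still avoiding its edges), so Theorem~\ref{deltaysplits} applied to this minor pair delivers the splitting equivalence, and identically for $\contract e$. For a $Y$-edge $f_i$, contracting $f_i$ merges $v$ with $v_i$ and produces a graph isomorphic to $G^\Delta \cut e_i$ under the correspondence sending $f_j \mapsto e_k$ and $f_k \mapsto e_j$, an isomorphism that fixes $S$; so $G^Y \contract f_i$ splits iff $G^\Delta \cut e_i$ splits. Cutting $f_i$ instead gives a graph that is exactly $G^\Delta \cut e_j \cut e_k$ with the remaining triangle edge $e_i=\{v_j,v_k\}$ subdivided by the degree-2 vertex $v$. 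A tree bijection in the spirit of the proof of Theorem~\ref{deltaysplits} then shows that common spanning trees of the Dodgson minors match across this subdivision: spanning trees containing both of $f_j, f_k$ correspond to trees containing $e_i$, while those containing exactly one of $f_j, f_k$ correspond to trees omitting $e_i$. So $S$ splits in $G^Y \cut f_i$ iff in $G^\Delta \cut e_j \cut e_k$, and since the latter is a minor of $G^\Delta \cut e_j$ (which splits by minor-minimality of $G^\Delta$), Theorem~\ref{minorclosedproperty} forces $G^\Delta \cut e_j \cut e_k$, and therefore $G^Y \cut f_i$, to split.

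For the reverse direction the cases of edges outside the $\Delta$-$Y$ and of $G^\Delta \cut e_i$ are handled symmetrically. The one case with no direct analogue is $G^\Delta \contract e_i$, which I would handle via planar duality: by Proposition~\ref{dual}, $G^\Delta \contract e_i$ splits iff $(G^\Delta)' \cut e_i'$ splits, and the dual pair $(G^\Delta)', (G^Y)'$ again differs by a $\Delta$-$Y$ transformation, but with $\Delta$ and $Y$ exchanged so that $(G^\Delta)'$ now plays the role of the $Y$-side. The cut-$Y$-edge argument from the previous paragraph then applies verbatim to this dual pair, using that $(G^Y)'$ is minor-minimal non-splitting (by Proposition~\ref{dual} and the hypothesis on $G^Y$), to show that $(G^\Delta)' \cut e_i'$ splits. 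The main technical subtlety will be the tree bijection at the subdivided edge: one has to distinguish carefully the two ways a spanning tree covers the new degree-2 vertex, and verify that the Dodgson minors, which leave $f_j, f_k$ untouched because $S$ does not contain them, transform in parallel so that no extra common spanning trees are introduced or lost.
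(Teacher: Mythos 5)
Your skeleton matches the paper's: edges outside the transformation are dispatched by Theorem~\ref{deltaysplits} applied to the pair of minors, the isomorphism $G^Y \contract f_i \cong G^\Delta \cut e_i$ handles two of the four remaining cases, and the last two cases are reduced across a degree-two vertex or a doubled edge. Your treatment of $G^Y \cut f_i$ as a subdivision of $G^\Delta \cut e_j \cut e_k$ is sound, though the tree correspondence you sketch is exactly the degree-two-vertex reduction already available as Corollary~\ref{degreetwo}, which the paper invokes directly (contract one edge at the new degree-two vertex, which is incident with no edge of $S$, landing in a proper minor of $G^\Delta$). One small citation slip: Theorem~\ref{minorclosedproperty} concerns the whole graph splitting, whereas what you need is that a vanishing Dodgson in $D_S$ remains zero after further deletions and contractions avoiding $S$; that is the content of Corollary~\ref{cutablecontractable}, and it is what justifies passing from $G^\Delta \cut e_j$ to $G^\Delta \cut e_j \cut e_k$.

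The genuine divergence is your handling of $G^\Delta \contract e_i$ by planar duality. This route can be made to work, but it rests on the claim that $(G^\Delta)'$ and $(G^Y)'$ again differ by a $\Delta$-Y with the roles exchanged; that requires the triangle of $G^\Delta$ to bound a face of the embedding and is nowhere established in the paper (Proposition~\ref{dual} only transfers non-splittingness and minor-minimality, not the $\Delta$-Y structure), so as written this step is an unproved lemma. It is also unnecessary: contracting $e_i = \{v_j, v_k\}$ identifies $v_j$ with $v_k$ and turns $e_j, e_k$ into a parallel pair neither edge of which lies in $S$, so by the argument of Proposition~\ref{doubleedge} one copy can be deleted while preserving non-splittingness, and $G^\Delta \contract e_i \cut e_j$ is isomorphic to $G^Y \contract \{f_j, f_k\}$, a proper minor of $G^Y$ in which $S$ splits by hypothesis. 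This is the paper's direct argument; it stays entirely inside the primal graph and avoids the duality commutation issue. I recommend replacing the duality detour with it.
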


Note that this theorem states that the graph $G$ is minor-minimal with respect to $S$. That is, for any edge $e \in E(G) - S$, the 5-configuration $S$ splits in $G \cut e$ and $G \contract e$. As an example, the graph in Figure \ref{k33minor} with a $K_{3,3}$ minor is non-splitting in the 5-configuration shown, arising from a $\Delta$-Y transformation from graph $Q$.

\begin{figure}[h]
  \centering
    \includegraphics[scale=1.0]{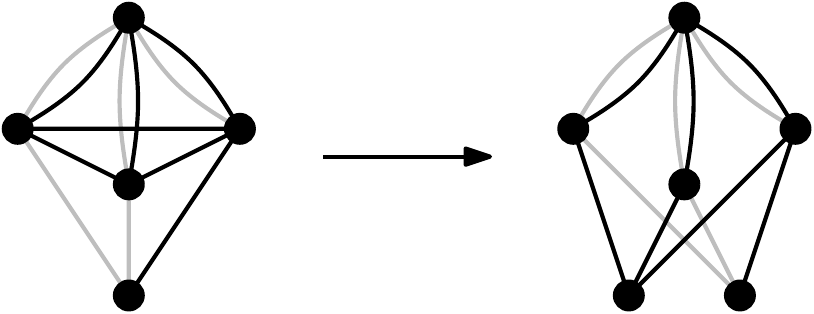}
  \caption{A 5-configuration that is minor-minimal non-splitting in a graph that is not.}
\label{k33minor}
\end{figure}

\begin{proof} Again, label the graphs as in Figure \ref{mindeltay1}. Suppose towards a contradiction that the transformation does not maintain minor-minimality. If an edge $e$ can be deleted or contracted, $e$ must be one of $\{e_1,e_2,e_3,f_1,f_2,f_3\}$, as otherwise by Theorem \ref{deltaysplits} this deletion or contraction would have been possible in the original graph. As such, we will show that any deletion or contraction of edges in the the $\Delta$ of $G^\Delta$ (resp., the Y of $G^Y$) results in a minor isomorphic to a minor of $G^Y$ (resp., $G^\Delta$), which contradicts minor-minimality of the graph with the minor-minimal non-splitting 5-configuration $S$.

First, suppose that either edge $e_i$ can be deleted or $f_j$ can be contracted, and the graph minor produced splits with respect to 5-configuration $S$. Without loss of generality, suppose $e_i = e_1$ and $f_j = f_1$. As labelled, we can see that $G^\Delta \cut e_1$ is isomorphic to $G^Y \contract f_1$. 

Next, suppose edge $e_i$ can be contracted in $G^\Delta$. Without loss of generality, suppose $e_i = e_3$. This produces parallel edges such that neither edge is in the 5-configuration. By Theorem \ref{doubleedge}, we may delete one of these edges and may hence consider $G^\Delta \contract e_3 \cut e_1$, which is isomorphic to $G^Y \contract \{f_1, f_2\}$. 

Finally, suppose we may delete an edge $f_j$ in $G^Y$, say without loss of generality $f_j = f_3$. This produces a vertex of degree two that is not incident with any edges of $S$. By Corollary \ref{degreetwo} we may contract one of these edges and if this graph is non-splitting with regards to $S$, this new minor will also be non-splitting. However, the graph $G^Y \cut f_3 \contract f_2$ is isomorphic to $G^\Delta \contract \{e_1,e_2\}$. 

In any case, then, if $G^\Delta$ (resp., $G^Y$) is not minor-minimal, then $G^Y$ (resp., $G^\Delta$) could not have been, either. Hence, $G^\Delta$ is minor-minimal non-splitting with regard to $S$ if and only if $G^Y$ is. \end{proof}

\begin{figure}[h]
  \centering
      \includegraphics[scale=1.0]{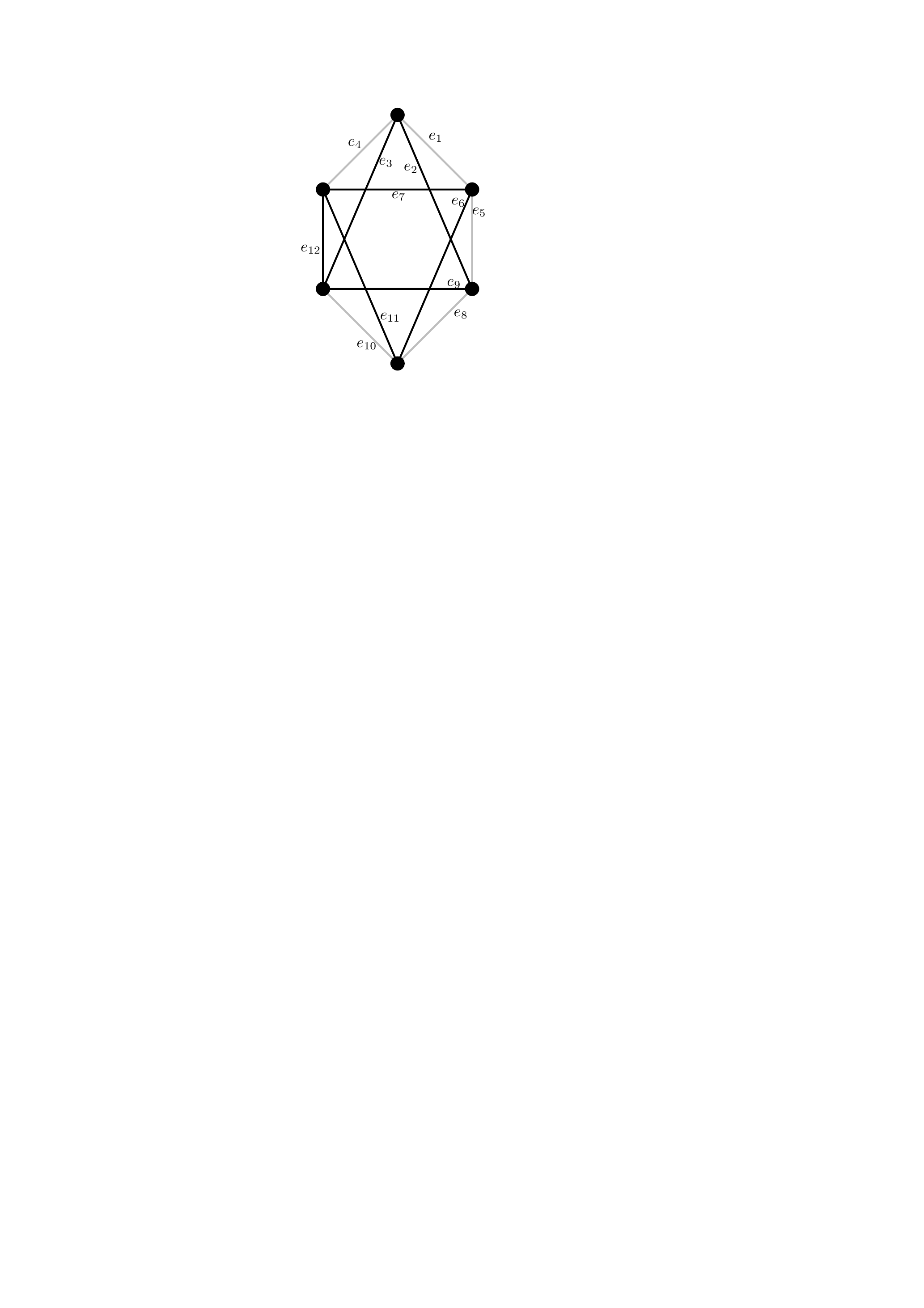}
  \caption{An edge labelling for graph $O$.}
\label{h}
\end{figure}

We may now more easily show that graphs $C$, $H$, and $O$ are non-splitting. Consider the graph $O$, labelled as in Figure \ref{h}. Fix 5-configuration $S = \{e_1,e_4,e_5,e_8,e_{10}\}$. Fully factored, the five-invariant is \begin{align*} ^5\Psi_O(e_1,e_4,e_5,e_8,e_{10}) = \pm \alpha_{e_2}\alpha_{e_6}\alpha_{e_7}\alpha_{e_9} &(\alpha_{e_3}\alpha_{e_6}\alpha_{e_9}\alpha_{e_{11}}\alpha_{e_{12}} + \alpha_{e_6}\alpha_{e_9}\alpha_{e_{11}}\alpha_{e_{12}}^2 \\ &+ \alpha_{e_3}\alpha_{e_6}\alpha_{e_{11}}\alpha_{e_{12}}^2 + \alpha_{e_2}\alpha_{e_3}\alpha_{e_6}\alpha_{e_{12}}^2 \\ &+ \alpha_{e_2}\alpha_{e_6}\alpha_{e_{11}}\alpha_{e_{12}}^2 + \alpha_{e_2}\alpha_{e_3}\alpha_{e_6}\alpha_{e_{11}}\alpha_{e_{12}} \\ &- \alpha_{e_3}^2\alpha_{e_7}\alpha_{e_{11}}\alpha_{e_{12}} + \alpha_{e_2}\alpha_{e_3}\alpha_{e_7}\alpha_{e_{12}}^2 \\ &- \alpha_{e_3}^2\alpha_{e_7}\alpha_{e_{11}}^2 + \alpha_{e_2}\alpha_{e_3}\alpha_{e_7}\alpha_{e_{11}}\alpha_{e_{12}} \\ &- \alpha_{e_3}^2\alpha_{e_9}\alpha_{e_{11}}^2 - \alpha_{e_3}\alpha_{e_9}\alpha_{e_{11}}^2\alpha_{e_{12}} \\ &- \alpha_{e_3}^2\alpha_{e_{11}}^2\alpha_{e_{12}} + \alpha_{e_2}\alpha_{e_3}\alpha_{e_{11}}\alpha_{e_{12}}^2 ). \end{align*} It follows that $O$ is non-splitting, since this five-invariant does not factor into terms linear in all Schwinger coordinates. It can further be demonstrated that $O$ is minor-minimal non-splitting. We do this by checking that all minors created by deleting or contracting a single edge in $E(O)$ split for all 5-configurations. It follows from Theorems \ref{deltaysplits} that the graphs $H$ and $C$ are non-splitting, as these graphs differ from $H$ by $\Delta$-Y transformations that include none of the edges in the 5-configuration $S$, specifically at one or both of the triangles $\{e_2,e_3,e_9\}$ and $\{e_6,e_7,e_{11}\}$. It can be shown that these are also minor-minimal non-splitting.

Recall from Proposition \ref{cycle} that if a 5-configuration contains an induced triangle or all edges incident with a vertex of degree three, then that 5-configuration splits. As such, there are ${12 \choose 5}$ possible 5-configurations in the edges of graph $O$, of which 276 will contain an induced triangle and hence will split. Interestingly, the remaining 516 5-configurations are all non-splitting. Dually, there are 516 possible 5-configurations in $C$ that do not contain all edges incident with a vertex of degree three, all of which do not split. The graph $Q$, drawn in Figure \ref{figbigfamily}, is of interest as well, as up to isomorphism there is only one non-splitting 5-configuration.

The graphs $H$, $O$, $C$, $K_5$, and $K_{3,3}$ have all been shown to be non-splitting. The remaining graphs that we claim are minor-minimal non-splitting all contain either a two vertex cut or double edge. As stated prior, our main result is to demonstrate that all non-splitting 3-connected simple graphs contain at least one of $H$, $O$, $C$, $K_5$, or $K_{3,3}$ as a minor. For the sake of brevity, then, we will omit five-invariant calculations that prove that the graphs with two vertex cuts or parallel edges are non-splitting.

Characterizations of minor-free graphs are common in graph theory. Similar to finding the set of forbidden minors for a minor-closed property of graphs, this involves describing all graphs that are free of a particular minor. As such, there are a number of results regarding graphs that are planar, $H$-, $O$-, or $C$-free.

There are characterizations in \cite{octahedron} and \cite{cube} of 4-connected graphs that are octahedron-free and cube-free, respectively. The results in \cite{octahedron} in particular demonstrate that we need only consider graphs that are at most 3-connected, which will be discussed in greater detail in Chapter \ref{cuts}. There is no approach for graphs with two or three vertex cuts in \cite{octahedron}, though. There is a characterization of cube-free 3-connected graphs in \cite{cube}, but it inductively relies on large checks for tree-decompositions.

In \cite{octfree}, the author demonstrates that a graph is $O$-free if and only if it it can constructed from a particular set of graphs using $k$-sums, $0 \leq k \leq 3$. Unfortunately, this construction method allows for deleting individual edges. As such, this was not useful for our approach. Specifically, deleting an edge from a non-splitting graph may produce a graph that splits.

The authors of \cite{cube-free} produce a characterization of 3-connected planar $C$-free graphs using minimal non-trivial three vertex cuts. To do this, they produce a small set of graphs such that this three vertex cut must be a spanning subgraph of one of the graphs in this set. Our results in Chapter \ref{trisub} are partially based on these results. In particular, the graph $S_2$, introduced in Figure \ref{s2new}, is a spanning subgraph of one of their graphs, shown in Figure 3 of \cite{cube-free}. One may use their results to construct planar cube-free graphs of arbitrary size by inserting one of their spanning subgraphs into another planar cube-free graph. Unfortunately, this method allows for chains of arbitrary length and little discernible structure, and these chains may further produce graphs with $H$ minors.

\begin{figure}[h]
  \centering
      \includegraphics[scale=0.65]{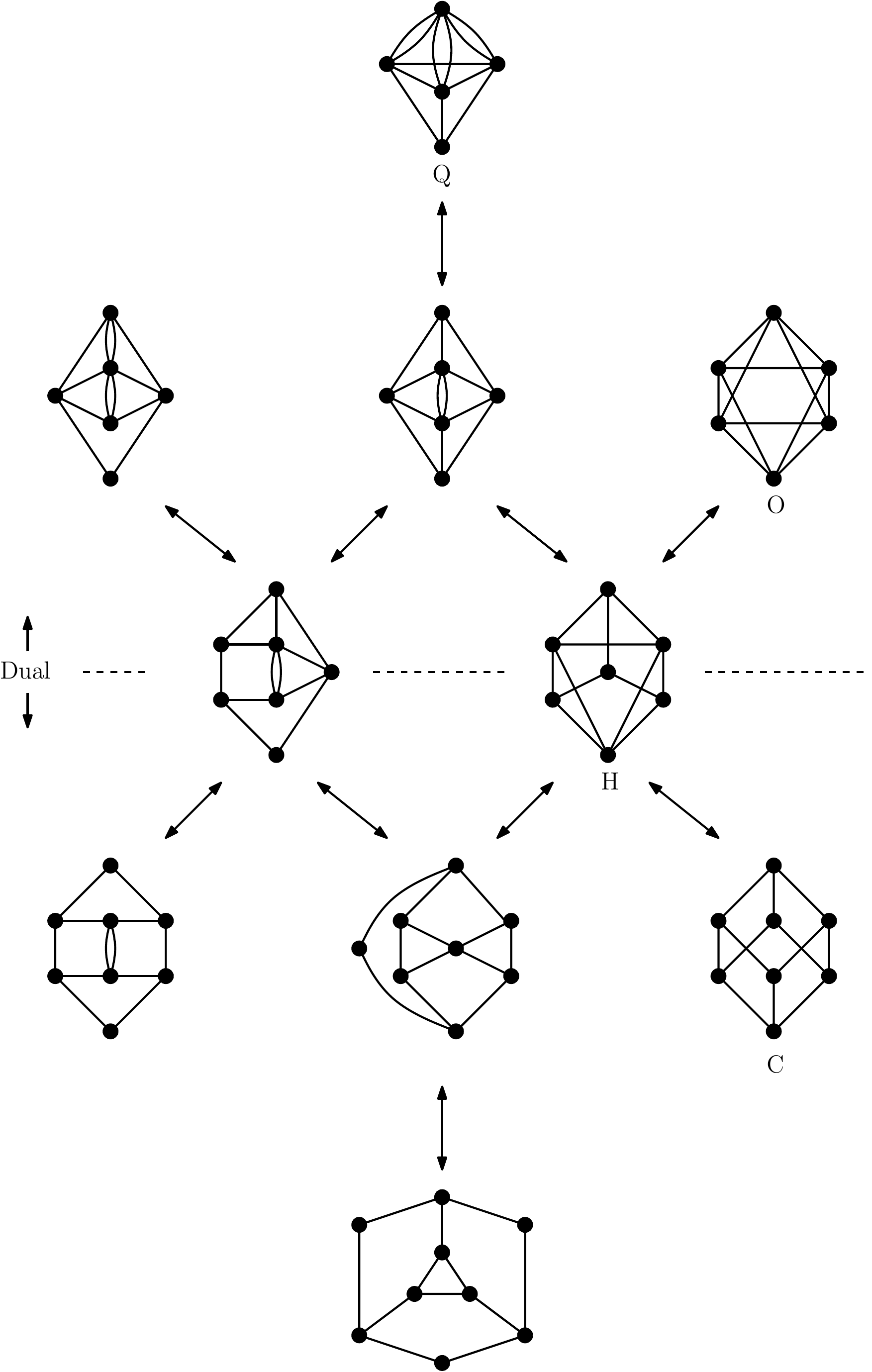}
  \caption{The minor-minimal non-splitting members of the $\Delta-Y$ family that contains $O$, $H$, and $C$.}
\label{figbigfamily}
\end{figure}

\begin{figure}[h]
  \centering
      \includegraphics[scale=0.63]{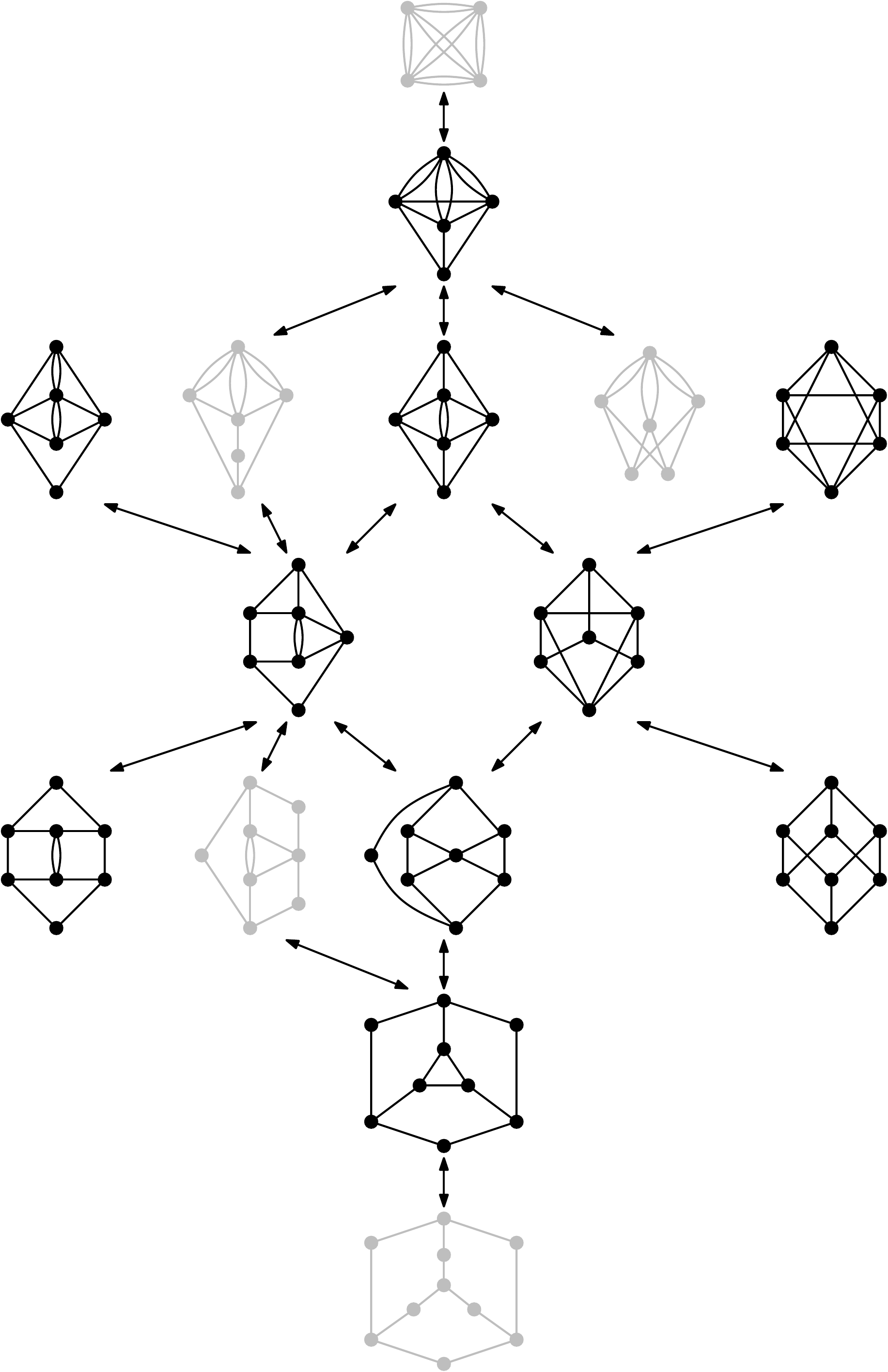}
  \caption{The full $\Delta$-Y family that contains $O$, $H$, and $C$. Graphs not in the previous figure are shaded grey. The graph previously noted as having a $K_{3,3}$ minor is non-splitting, all other new graphs split.}
\label{fullfamily}
\end{figure}

\begin{figure}[h]
  \centering
      \includegraphics[scale=1.0]{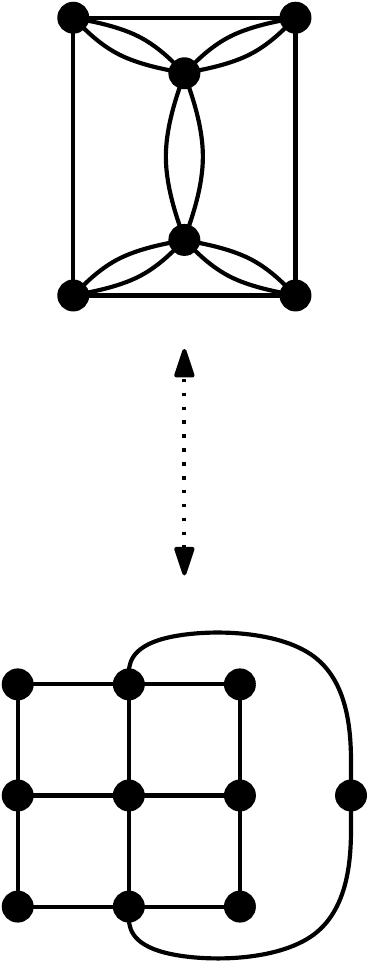}
  \caption{The members of a second $\Delta$-Y family that are known to be minor-minimal non-splitting. Note that both graphs are non-simple or have a two vertex cut, and that these graphs are dual to each other.}
\label{regfamily}
\end{figure}

%
%

\chapter{Graph Connectivity}
\label{cuts}

In this chapter, we examine how the connectivity of a graph affects splittingness. Since we are looking for spanning trees of graph minors, it seems logical that vertex cut sets would influence the existence of such trees, and especially affect how the tree appears in the components created by such a cut. This will allow for great restrictions on the connectivity of graphs that split.

Let $G$ be a graph. A \emph{connected component} $H$ of $G$ is a maximally connected subgraph. Suppose $G$ is connected. A set of vertices $S \subset V(G)$ is a \emph{vertex cut set} if the graph induced by $V(G) - S$ is disconnected or trivial. If $|S| =1$, then the vertex $v \in S$ is a \emph{cut vertex}. The \emph{vertex connectivity}, $\kappa(G)$, of a connected graph is the minimum number of vertices whose deletion disconnects graph $G$ or creates a trivial graph. For any integer $n$, if $\kappa(G) \geq n$, we say that $G$ is $n$-connected.

The following is a famous theorem by Menger, useful in proving $k$-connectivity. The corollary follows logically (see Theorem 1.4.7 in \cite{graphbojan}).

\begin{Menger} Let $G$ be a finite undirected graph and $v_1,v_2 \in V(G)$ two distinct, non-adjacent vertices. The minimum number of vertices which must be removed to disconnect $v_1$ and $v_2$ is equal to the maximum number of vertex-independent paths between $v_1$ and $v_2$. \end{Menger}

\begin{corollary} \label{mengerscor} Let $G$ be a $k$-connected graph, $v \in V(G)$, and $A \subset V(G)$ such that $|A| = k$ and $v \notin A$. There are $k$ paths in $G$, call them $P_1,...,P_k$, such that path $P_i$ is from $v$ to vertex $a_i \in A$ for $1\leq i \leq k$ and for any two distinct paths $P_j$ and $P_k$, $V(P_j) \cap V(P_k) = \{v\}$. \end{corollary}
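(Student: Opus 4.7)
The plan is to reduce the statement to Menger's Theorem, as stated above, via a standard auxiliary graph construction. Form the graph $G'$ by adjoining a new vertex $v'$ to $V(G)$ and adding an edge from $v'$ to each vertex of $A$. Since $v \notin A$, the vertices $v$ and $v'$ are distinct and non-adjacent in $G'$, so Menger's Theorem applies to this pair.

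The central step will be to verify that the minimum size of a $v$--$v'$ vertex separator in $G'$ is at least $k$. Given any such separator $S$, note $v, v' \notin S$, and every $v$--$v'$ path in $G' - S$ must enter $v'$ through a vertex of $A \setminus S$, so $S$ separates $v$ from every vertex of $A \setminus S$ in $G$ itself (since $G' - v' = G$). If we had $|S| < k$, then $A \setminus S$ would be nonempty, and since $v \notin A$ this would exhibit $S$ as a vertex cut of $G$ of size less than $k$, contradicting $k$-connectivity. Hence at least $k$ vertices are required to separate $v$ from $v'$ in $G'$, and Menger's Theorem then supplies $k$ internally vertex-disjoint paths $P'_1, \dots, P'_k$ from $v$ to $v'$ in $G'$.

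Deleting the terminal vertex $v'$ from each $P'_i$ yields a path $P_i$ in $G$ from $v$ to some $a_i \in A$; the $P_i$ are nontrivial because any single-edge $P'_i$ would force $v$ to be adjacent to $v'$ and hence to lie in $A$, which was excluded. Internal disjointness of the $P'_i$ at $v$ and $v'$ ensures that the penultimate vertices $a_1, \dots, a_k$ are distinct elements of $A$, and since $|A| = k$ they exhaust $A$ up to relabeling; the same disjointness shows $V(P_i) \cap V(P_j) = \{v\}$ for $i \neq j$, as required. The step I expect to take the most care is the separator bound: one must argue cleanly regardless of whether $v$ has neighbors in $A$ and whether $S$ contains members of $A$, with the hypothesis $v \notin A$ handling the degenerate cases. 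Everything else is a bookkeeping translation between the two graphs.
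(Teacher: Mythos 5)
Your proof is correct. The paper itself gives no argument for this corollary---it simply asserts that it ``follows logically'' and points to Theorem 1.4.7 of the Mohar--Thomassen reference---so there is nothing to compare against; your auxiliary-vertex construction (adjoin $v'$ joined to all of $A$, verify the $v$--$v'$ separator bound using $k$-connectivity of $G$ and the fact that $v\notin A$ keeps $v$ and $v'$ non-adjacent, then strip $v'$ from the resulting internally disjoint paths) is exactly the standard derivation of this fan version of Menger's theorem, and all the degenerate cases are handled.
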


The following definition arises from the connected components of a graph after a vertex cut.

\begin{definition} Let $G$ be a connected graph with a vertex cut set $\{ v_1,v_2,...,v_n \} \subseteq V(G)$ such that the deletion creates connected components $G_1, ... , G_m$. Define $C_i$ to be the graph induced by $V(G_i) \cup \{ v_1,v_2,...,v_n \}$ for $1 \leq i \leq m$. Call these the \emph{full components}. \end{definition}

Full components will be useful when considering 5-configurations and vertex cuts. Specifically, if $G$ is a graph and $K \subset V(G)$ is a vertex cut set, the graph induced by $V(G) -K$ will have fewer edges than $|E(G)|$. When considering the behaviour of a non-splitting 5-configuration in a graph with a vertex cut set, it will be paramount that we maintain all the edges incident with vertices in the set $K$. Unfortunately, if $v_i,v_j \in V(G) \cap K$ are two distinct vertices and $\{v_i,v_j\} \in E(G)$, then this edge will appear in every full component created by vertex cut set $K$. In the particular case where a 5-configuration contains such an edge, then, we must adjust our proof techniques accordingly.

We will now begin restricting the connectivity of graphs that are minor-minimal non-splitting.

Let $C_m$ be the cycle on ordered vertices $\{0,1,...,m-1\}$. The graph $C^2_{m}$ is created from this cycle by adding the edges $\{i, i+2\}$ for all $i \in \{0,1,...,m-1\}$ where addition is modulo $m$. This graph is called the \emph{square of the cycle}. Trivially, the graph $C^2_{2k+1}$, $k \geq 2$, is nonplanar. The following is Theorem 5.1 in \cite{octahedron}.

\begin{theorem} \label{notfour} Let $G$ be a $4$-connected graph. Then $G$ either contains a minor isomorphic to the octahedron $O$ or is isomorphic to the square of on odd cycle, $C^2_{2k+1}$ for some $k \geq 2$.\end{theorem}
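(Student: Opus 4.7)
The plan is to argue by induction on $|V(G)|$, using contractions of edges that preserve $4$-connectivity.

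First I would dispose of the base cases. A short enumeration shows that the $4$-connected graphs on at most $7$ vertices fall into a manageable list: $K_5 \cong C^2_5$, the octahedron $O$ itself, $C^2_7$, and a small number of graphs containing $O$ as a minor. This verifies the claim at the base. I would also check directly that $C^2_{2k+1}$ is $4$-connected for every $k \geq 2$ and contains no octahedron minor; the latter can be seen by noting that $O = K_{2,2,2}$ requires three vertex-disjoint pairs of mutually non-adjacent vertices whose neighborhoods cover all six vertices, a configuration the sparse cyclic structure of $C^2_{2k+1}$ cannot support (each vertex has only four neighbors, arranged in a narrow band around it).

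For the inductive step, assume $G$ is $4$-connected, $O$-minor-free, and has more than $7$ vertices. A classical contractibility result then guarantees the existence of an edge $e \in E(G)$ such that $G \contract e$ is still $4$-connected, unless $G$ falls into a restricted exceptional family; in that exceptional family one can exhibit an octahedron minor by a direct local inspection of the neighborhood structure forcing the failure of contractibility. Assuming such a contractible $e$ exists, the graph $G \contract e$ is $4$-connected, $O$-minor-free (since any minor of $G \contract e$ is a minor of $G$), and strictly smaller, so by induction $G \contract e \cong C^2_{2k-1}$ for some $k \geq 2$. The task then reduces to reconstructing $G$ from $C^2_{2k-1}$ by vertex-splitting: enumerate the possible distributions of the contracted vertex's incident edges between the two endpoints of $e$, and check that any split compatible with $4$-connectivity either reproduces $C^2_{2k+1}$ or exposes an octahedron model.

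The main obstacle will be this final vertex-splitting case analysis. Naively there are many ways to split a vertex of $C^2_{2k-1}$, and the argument must be uniform in $k$. The approach is to exploit the rotational symmetry of $C^2_{2k-1}$ to reduce to a short list of essentially distinct splits, and for each such split either identify six vertices that witness an $O$ minor, or recognize the "canonical" extension that yields $C^2_{2k+1}$. This combinatorial enumeration, paired with the separate local argument for the exceptional family of $4$-connected graphs possessing no contractible edge, is the combinatorial heart of the proof.
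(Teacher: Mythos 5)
First, note that the thesis offers no proof of this statement at all: it is imported verbatim as Theorem 5.1 of Maharry's paper \cite{octahedron}, so the only comparison available is with Maharry's argument, whose overall shape (induction on $|V(G)|$ via contraction, together with the classical characterization of $4$-connected graphs with no contractible edge) your sketch does broadly follow. However, your sketch contains a genuine error precisely at the point where the second outcome of the theorem has to be produced. The relevant contractibility result (Martinov) says that a $4$-connected graph with no contractible edge is either the square of a cycle $C^2_n$ or the line graph of a cyclically $4$-edge-connected cubic graph. In particular, \emph{every} square of a cycle --- including every $C^2_{2k+1}$ --- lies in the exceptional family, so your claim that ``in that exceptional family one can exhibit an octahedron minor by a direct local inspection'' cannot be right: the exceptional family is exactly where the graphs $C^2_{2k+1}$ live and must be separated out. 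What actually has to be shown there is that $C^2_{2k}$ has an octahedron minor (true: $C^2_6$ \emph{is} the octahedron, and the even squares above it contract onto it), that $C^2_{2k+1}$ does not, and that the line graphs of cyclically $4$-edge-connected cubic graphs --- an infinite class --- all have octahedron minors. That last step is a real argument (e.g.\ via $L(K_4)\cong O$ and a minor-containment statement for the underlying cubic graphs), not a local inspection.

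There is also a parity error in your inductive step. If $e$ is contractible and $G/e\cong C^2_{2k-1}$, then $|V(G)|=2k$ is even, so the ``reconstruction by vertex-splitting'' can never reproduce $C^2_{2k+1}$, which has $2k+1$ vertices; the dichotomy you state for that case is impossible as written. What must be proved instead is that \emph{every} $4$-connected graph obtained from $C^2_{2k-1}$ by splitting one vertex has an octahedron minor --- consistent with the fact that squares of cycles themselves have no contractible edges and therefore never arise as $G$ in this branch. With the exceptional family handled correctly and the splitting claim restated in this form, the outline becomes the standard proof; as written, the two branches of your induction each assert something false about where the graphs $C^2_{2k+1}$ come from.
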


A prior result which is also sufficient for our needs was proved by Halin in \cite{halin}.

\begin{theorem} If $G$ is a simple graph with minimum vertex degree four, then $G$ has a minor isomorphic to the octahedron or $K_5$. \end{theorem}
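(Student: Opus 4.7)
The plan is to proceed by induction on $|V(G)|$. The base case is $|V(G)| = 5$: a simple graph on five vertices with $\delta \geq 4$ must be $K_5$ itself, giving the $K_5$ minor immediately.

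For the inductive step, with $|V(G)| \geq 6$, my strategy is to locate a \emph{contractible edge} $e = uv$, meaning an edge such that the simple graph obtained from $G / e$ by suppressing any resulting parallel edges still satisfies $\delta \geq 4$. If such an $e$ exists, applying the inductive hypothesis to $G / e$ produces a $K_5$ or octahedron minor there, and since edge contraction is a minor operation this minor lifts back to $G$. Quantitatively, contracting $uv$ produces a new vertex of degree $\deg(u) + \deg(v) - 2 - |N(u) \cap N(v)|$, and each common neighbor of $u$ and $v$ loses exactly one neighbor; hence $e$ is contractible if and only if (i) $\deg(u) + \deg(v) \geq |N(u) \cap N(v)| + 6$, and (ii) every vertex in $N(u) \cap N(v)$ has degree at least $5$ in $G$.

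The main obstacle is the case where no contractible edge exists. Here I would fix a vertex $v$ with $\deg(v) = \delta(G)$. If $\delta(G) \geq 5$, then (ii) is automatic, and (i) can be secured for some edge by a simple counting argument comparing $|E(G)|$ against the sum of $|N(u) \cap N(v)|$ over edges. If $\delta(G) = 4$, write $N(v) = \{a, b, c, d\}$. When $N(v)$ induces $K_4$, the set $\{v\} \cup N(v)$ already spans a $K_5$ subgraph and we are done. Otherwise some pair in $N(v)$ is non-adjacent, so the corresponding edge from $v$ has at most two common neighbors with $v$ restricted to $N(v)$; thus (i) holds for some edge incident to $v$, and the failure of contractibility must come from (ii), i.e., the forced existence of degree-$4$ common neighbors attaching in a prescribed way to $\{v\} \cup N(v)$.

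The final and most delicate step is to assemble these degree-$4$ witnesses, together with $v$ and $N(v)$, into an explicit octahedron minor, which I would view as $K_{2,2,2}$ and realize by partitioning a subset of $V(G)$ into six connected branch sets grouped into three pairs with every cross-pair pair joined by some edge of $G$. The candidate branch sets are singletons from $\{v\} \cup N(v)$ together with small connected pieces containing the forced witnesses. I expect the hardest part to be the case analysis on how many non-edges $N(v)$ contains (ranging from one to six) together with verification that, in each sub-case, the degree constraints imposed by the simultaneous failure of (i) and (ii) actually supply every required cross-pair adjacency; this is where the proof does its real work, since everywhere else the argument is just contraction plus induction.
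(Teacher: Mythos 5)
The paper itself offers no proof of this statement; it is quoted as a theorem of Halin \cite{halin}, so there is no in-text argument to compare against, and your proposal must stand on its own. Its framework is sound as far as it goes: the degree formula for the contracted vertex and your conditions (i) and (ii) are correct, and in fact when $\delta(G) \geq 5$ both conditions hold for \emph{every} edge, since $|N(u) \cap N(v)| \leq \deg(u) - 1$ gives $\deg(u) + \deg(v) - |N(u) \cap N(v)| \geq \deg(v) + 1 \geq 6$, so no counting argument is needed there. The problem is that the terminal case --- a graph with $\delta = 4$ in which no edge is contractible --- is exactly where the entire content of Halin's theorem sits, and you dispatch it with ``I would view\dots I expect the hardest part to be the case analysis.'' That step is not carried out, and it cannot be waved through: nothing in the local structure you extract (degree-four common neighbours of $v$ and its neighbours, all of which lie \emph{inside} $N(v)$) identifies the sixth branch set that $K_{2,2,2}$ requires beyond the five vertices of $\{v\} \cup N(v)$, let alone verifies the twelve cross adjacencies.

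Worse, the dichotomy you propose for that case (either $N(v)$ induces $K_4$ and you read off a $K_5$ subgraph, or you assemble an octahedron minor from the witnesses) is not just incomplete but fails on an actual family of graphs. The squares of odd cycles $C^2_{2k+1}$, $k \geq 3$, are $4$-regular; every edge has a (necessarily degree-four) common neighbour, so no edge is contractible in your sense; the clique number is $3$, so $N(v)$ never induces $K_4$; and by Theorem \ref{notfour} these graphs have no octahedron minor at all. Hence on this family your plan must instead produce a $K_5$ minor, and one with non-singleton branch sets not confined to $\{v\} \cup N(v)$: for $C^2_7$ with vertices $0,\dots,6$ and $i \sim i \pm 1, i \pm 2 \pmod 7$, the branch sets $\{1,3\}, \{2,4\}, \{0\}, \{5\}, \{6\}$ realize $K_5$, but your sketch has no mechanism that would find such a configuration. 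So the final step is not merely a deferred computation; as described it heads toward a minor that does not exist in some of the graphs it must handle, and the genuine argument for the non-contractible case still has to be supplied.
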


It was demonstrated in Chapter \ref{basics} that $K_5$ and $K_{3,3}$ are non-splitting, and a quick check reveals that they are minor-minimal non-splitting. In Chapter \ref{delta-y} the graph $O$ was shown to be non-splitting, and again is in fact minor-minimal non-splitting. It follows from Theorem \ref{notfour} that any further forbidden minors may be at most 3-connected. The following theorem proves that a graph with a cut vertex cannot be minor-minimal non-splitting, and we are left only needing to consider graphs $G$ with $\kappa(G)$ equal to either two or three.

\begin{theorem} A minor-minimal non-splitting graph cannot have a cut vertex. \end{theorem}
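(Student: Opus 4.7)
The plan is to assume for contradiction that $G$ is minor-minimal non-splitting with cut vertex $v$, and to derive a contradiction in two steps: first, I will show that any non-splitting 5-configuration $S$ must sit entirely inside one full component of $G$ at $v$; second, I will use any edge in another full component, together with Corollary \ref{cutablecontractable}, to exhibit a proper minor of $G$ on which $S$ is still non-splitting. Write $C_1,\ldots,C_k$, $k\ge 2$, for the full components associated with $v$, and put $s_i=|S\cap E(C_i)|$.

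The confinement claim is the heart of the proof. Deletions and contractions of edges internal to the full components preserve the fact that (a possibly identified image of) $v$ separates the components, so for any Dodgson $D=\Psi^{I,J}_K\in D_S$ the spanning trees of $\mone$ and $\mtwo$ decompose across $v$; in particular, any $T\in T_D$ has $T\cap E(C_i)$ being a common spanning tree of the pair $\mone_{C_i},\mtwo_{C_i}$. Comparing the vertex counts of these two minors and using Proposition \ref{cycle} to rule out cycles in $S\cap E(C_i)$ whenever $s_i\le 3$ forces $|I_i|=|J_i|$ in every component. Specialising to the two--three Dodgsons ($|I|=|J|=2$, $|K|=1$, pairwise disjoint) then gives $s_i=2|I_i|+|K_i|$, so $s_i$ is odd exactly when $K$ lies in $C_i$. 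Since $\sum s_i=5$, at least one component has odd $s_i$; but if $S$ meets any second component, then whichever parity that second component has, one of the two choices of $K$ (in the first or in the second) places $K$ in a component with the wrong parity and so produces a zero Dodgson, contradicting non-splittingness. Checking the possible partitions of $5$ across $\ge 2$ components shows the only case that survives this parity obstruction is $s_1=5$, and the one distribution where $S\cap E(C_i)$ could support a long cycle ($s_1=4$, $s_2=1$) is dispatched directly: the Dodgson with $K$ equal to any edge of $C_1$ forces $|I_2|+|J_2|=1$ with $|K_2|=0$, so the contracted sets in $C_2$ have ranks $0$ and $1$ and the Dodgson vanishes, again contradicting non-splittingness.

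Having confined $S$ to $C_1$, fix another full component $C_2$ and any edge $e\in E(C_2)$; by Corollary \ref{loopfree} $e$ is not a loop, and by construction $e\notin S$. For every $D\in D_S$ the minors $\mone,\mtwo$ agree with $G$ on $C_2$, so any $T\in T_D$ restricts to a spanning tree of $C_2$, and this sub-tree may be swapped out for any other spanning tree of $C_2$ without affecting $T$'s membership in $T_D$. If $e$ is not a bridge of $C_2$, every $D\in D_S$ admits some $T\in T_D$ avoiding $e$, so by Corollary \ref{cutablecontractable} the configuration $S$ is still non-splitting in $G\cut e$; if $e$ is a bridge, every spanning tree of $C_2$ contains $e$, so every $T\in T_D$ contains $e$, and the same corollary yields that $S$ is still non-splitting in $G\contract e$. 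Either way we contradict minor-minimality.

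The hard part will be the confinement step, since the Dodgson factorisation at a cut vertex is not a literal polynomial product (the vertex $v$ is shared across the components). Making the decomposition of spanning trees precise requires tracking both vertex counts and matroid ranks inside each $\mone_{C_i},\mtwo_{C_i}$, and the distributions in which $S\cap E(C_i)$ could host a $4$- or $5$-cycle must be handled by a direct rank argument rather than by the clean parity count available in the small cases.
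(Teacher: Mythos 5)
Your overall strategy coincides with the paper's: decompose spanning trees across the cut vertex into the full components, show that a full component meeting $S$ in a small nonzero number of edges forces some Dodgson to vanish, and then use a component disjoint from $S$ to delete or contract an edge via Corollary \ref{cutablecontractable}, contradicting minimality. Your final step (swapping the restriction of $T$ to $C_2$ for any other spanning tree of $C_2$, then deleting a non-bridge or contracting a bridge) is exactly the paper's Case 1. Where you diverge is in how the vanishing Dodgson is produced. The paper does not confine $S$ to a single component at all: it observes by pigeonhole that some full component contains at most two edges of $S$, and for the one-or-two-edge case it exhibits the single Dodgson $\Psi^{e_1e_2e_3,e_2e_4e_5}$ (so that $e_1\in E(C_j)$ is deleted in $\mone$ and contracted in $\mtwo$) and argues that the two paths in $\mone$ from $v$ to the endpoints of $e_1$ inside $C_j$ must close up into a cycle in $\mtwo$. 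That path/cycle formulation is the same edge-count mismatch your parity argument detects, but it sidesteps the rank-versus-cardinality issue entirely.

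That issue is the one real loose end in your write-up. Your identity $s_i=2|I_i|+|K_i|$ and the resulting parity obstruction are only valid when the contracted sets $I_i\cup K_i$ and $J_i\cup K_i$ are independent in $C_i$; you defer the dependent cases to Proposition \ref{cycle}, which in the paper is stated and proved only for planar graphs, and to a ``direct rank argument'' that you do not actually give except in the $4+1$ case. These gaps are fillable --- the contracted sets have at most three edges, so only cycles of length at most three in $S$ arise, and any non-planar candidate already contains a $K_5$ or $K_{3,3}$ minor and so is not minor-minimal --- but as written the confinement step is both heavier than necessary and not fully closed. If you adopt the paper's trick of choosing one Dodgson in which an edge of $S\cap E(C_j)$ lies in $I\setminus(J\cup K)$ and arguing via the forced cycle, the whole parity apparatus, the appeal to Proposition \ref{cycle}, and the full confinement of $S$ to one component become unnecessary.
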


\begin{proof} Suppose towards a contradiction that graph $G$ is a minor-minimal non-splitting graph with a cut vertex, $v$. Let $G_1,...,G_n$ be the connected components of the graph induced by $V(G) - \{v\}$. Fix a non-splitting 5-configuration $S = \{e_1,e_2,e_3,e_4,e_5\}$.

\startingcases
\begin{case} A full components $C_j$ contains no edges of the 5-configuration. \end{case} 

Fix a spanning tree of $C_j$, call it $t_C$. As $C_j$ is unaffected by the deletions and contractions in taking the minors $\mone$ and $\mtwo$, for any Dodgson $D \in D_S$ and tree $T \in T_D$, we may construct a tree $T'$ such that $T'$ is induced by the edge set $(T - (E(C_j) \cap T)) \cup E(t_C)$. Whence, $T' \in T_D$, and as $D$ was arbitrary, by Corollary \ref{cutablecontractable} we may contract all the edges in $E(t_C)$ and delete all edges in $E(C_j) - E(t_C)$, contradicting minimality.

\begin{case} There is a full component $C_j$ with one or two edges in the 5-configuration. \end{case}

Without loss of generality suppose $C_j$ contains edge $e_1$ only (resp., $e_1$ and $e_2$). Let $e_1 = \{v_1,v_2 \}$. Consider $D = \Psi^{e_1e_2e_3,e_2e_4e_5} \in D_S$ and minors $\mone^{\cut e_1e_2}$, $\mtwo^{\cut e_2}_{\contract e_1}$. For any spanning tree $T \in T_D$, $T$ must contain a path from $v$ to $v_1$ and from $v$ to $v_2$ in $\mone$. This path may be trivial, and allows for $v = v_j$, $j \in \{1,2\}$. In $\mtwo$, these paths will necessarily create a cycle. As such, it is not possible to have a common spanning tree between $\mone$ and $\mtwo$, and hence $D = 0$, a contradiction. 

\exitingcases

Since there are at least two full components, any distribution of edges of $S$ in $G$ will have a full component containing at most two edges in the 5-configuration. Since $S$ is a non-splitting 5-configuration, it must be the case that $G$ is not minor-minimal, and as such no minor-minimal non-splitting graphs may have cut vertices. \end{proof}

Hence, a minor-minimal non-splitting graph must be at least 2-connected. In Chapter \ref{delta-y}, we specifically saw vertices of degree two in minor-minimal non-splitting graphs. We know then that there are in fact graphs $G$ such that $G$ is minor-minimal non-splitting and $\kappa(G) = 2$. We may, however, restrict the appearance of minor-minimal non-splitting graphs with two vertex cuts, and further provide a restriction on 5-configurations in special cases of graphs with three vertex cuts.

\begin{prop}\label{prop1} Suppose $G$ is a graph with a two vertex cut $\{v_1,v_2\}$. Suppose this cut induces full component $C_j$. Let $S = \{e_1,e_2,e_3,e_4\} \subset E(G)$ be a fixed set of four edges such that $e_1$ and $e_2$ are in $E(C_j)$, but edges $e_3$ and $e_4$ are not. The Dodgson $\Psi^{e_1e_2,e_3e_4} = 0$. \end{prop}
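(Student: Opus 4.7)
The plan is to invoke Proposition \ref{calcdodgson} and show that no edge set can serve as a common spanning tree of both minors $\mone = G \cut \{e_1,e_2\} \contract \{e_3,e_4\}$ and $\mtwo = G \cut \{e_3,e_4\} \contract \{e_1,e_2\}$; this immediately forces $\Psi^{e_1 e_2, e_3 e_4} = 0$. The whole argument takes place inside the full component $C_j$, using the two-vertex cut $\{v_1,v_2\}$ to constrain how any such spanning tree can meet $C_j$.

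Suppose for contradiction that $T$ is a common spanning tree, and set $F := T \cap E(C_j)$; since $T$ is disjoint from $\{e_1,e_2,e_3,e_4\}$, in fact $F \subseteq E(C_j) \setminus \{e_1,e_2\}$. The strategy is to trap $|F|$ between incompatible bounds. For the lower bound, I argue from $\mone$: each $w \in V(C_j) \setminus \{v_1,v_2\}$ is incident only with edges of $E(C_j)$, and in $\mone$ the edges $e_1, e_2$ have been deleted, so the only way for $T$ to reach $w$ is through $F$ along a path that exits $C_j$ at $v_1$ or $v_2$. Viewing $F$ as a forest on vertex set $V(C_j)$, every component must therefore contain $v_1$ or $v_2$, so there are at most two components, giving $|F| \geq |V(C_j)| - 2$.

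For the upper bound, I use $\mtwo$: since $T$ is acyclic in $\mtwo$, $F$ must be acyclic in $C_j \contract \{e_1,e_2\}$, and in the generic case where $\{e_1,e_2\}$ is independent in the cycle matroid of $C_j$, this lifts to $F \cup \{e_1,e_2\}$ being acyclic in $C_j$; the standard tree bound then forces $|F| + 2 \leq |V(C_j)| - 1$, i.e.\ $|F| \leq |V(C_j)| - 3$. These two inequalities cannot hold simultaneously, ruling out $T$. The main obstacle will be the lower-bound step, where one must carefully justify that requiring every component of $F$ to contain $v_1$ or $v_2$ really does cap the component count at two rather than allowing stray components among interior vertices. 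Degenerate situations where $\{e_1,e_2\}$ is dependent (such as parallel edges in $C_j$ or a loop) weaken the upper bound, but they can be mopped up by observing that such a dependency changes the vertex count of $C_j \contract \{e_1,e_2\}$ relative to $C_j \cut \{e_1,e_2\}$ by a different amount than the corresponding change on the $e_3,e_4$ side, so $|V(\mone)| \neq |V(\mtwo)|$ and no single edge set of matching size can span both minors.
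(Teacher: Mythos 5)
Your proof is correct and follows essentially the same route as the paper: the paper likewise bounds $|T \cap E(C_j)|$ above by $|V(C_j)|-3$ using the minor in which $e_1,e_2$ are contracted, and then derives a contradiction in the other minor from the fact that the components of $T \cap E(C_j)$ can only reach the rest of the tree through $v_1$ and $v_2$. Your explicit two-sided inequality on $|F|$ and your remark on the degenerate (dependent) case are just a slightly more careful packaging of the same argument.
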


\begin{proof} Let $D = \Psi^{e_1e_2,e_3e_4}_{}$ and consider its minors $\mone^{\cut e_1e_2}$ and $\mtwo_{\contract e_1e_2}$. Suppose $T \in T_D$. Let $|V(C_j)|=v$. Then, in $\mtwo$ the tree $T$ may contain at most $(v-2)-1$ edges in $E(C_j)$ as the full component appears in this minor. Since $T$ is a spanning tree in both $\mone$ and $\mtwo$, there will be three fewer edges in $T \cap E(C_j)$ than vertices in $V(C_j) \cap V(\mone)$. It is impossible that all vertices in $C_j$ have paths to $v_1$ or $v_2$ in $\mone$ and as such $T$ is not connected in $\mone$, a contradiction. \end{proof}

\begin{corollary} \label{corprop1} Suppose $G$ is a graph with a two vertex cut that separates component $G_j$. Consider full component $C_j$. If $S = \{e_1,...,e_5\}$ is a 5-configuration such that $e_1,e_2 \in E(C_j)$ but $e_4,e_5 \notin E(C_j)$, then $S$ splits. \end{corollary}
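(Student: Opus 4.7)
The plan is to exhibit a single Dodgson in $D_S$ that vanishes, by directly adapting the counting argument of Proposition~\ref{prop1}. The natural candidate is $D = \Psi^{e_1 e_2, e_4 e_5}_{e_3}$, which places $e_3$ into $K$ so that it is contracted symmetrically in both minors; this leaves the essential ``inside $C_j$'' versus ``outside $C_j$'' asymmetry in $\{e_1, e_2\}$ against $\{e_4, e_5\}$ exactly as in Proposition~\ref{prop1}.

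First I would write down the two minors, $\mone = G \cut \{e_1, e_2\} \contract \{e_3, e_4, e_5\}$ and $\mtwo = G \cut \{e_4, e_5\} \contract \{e_1, e_2, e_3\}$. Since $e_4, e_5 \notin E(C_j)$, the images of $C_j$ inside $\mone$ and $\mtwo$ are affected only by what happens to $e_1, e_2$ in $C_j$, together with the (identical) contraction of $e_3$ in both minors if $e_3 \in E(C_j)$. Writing $n$ for the number of vertices in the image of $C_j$ in $\mone$, the image of $C_j$ in $\mtwo$ has exactly $n - 2$ vertices, coming from the additional contraction of $e_1$ and $e_2$.

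Suppose for contradiction that $T \in T_D$. Since $T$ contains no edge of $S$, the set $T \cap E(C_j)$ is a subset of $E(C_j) \setminus \{e_1, e_2, e_3\}$, the same pool of candidate edges in both minors. Exactly as in Proposition~\ref{prop1}, the restriction of $T$ to the image of $C_j$ in $\mone$ must span all $n$ vertices in at most two connected pieces (each containing the image of a cut vertex), and so uses at least $n - 2$ edges; but the same edge set in $\mtwo$ sits inside a graph on only $n - 2$ vertices and therefore uses at most $n - 3$ edges. This contradiction forces $T_D = \emptyset$, hence $D = 0$, and so $S$ splits.

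The main subtlety, shared with Proposition~\ref{prop1}, is verifying that the two extra contractions in $\mtwo$ really do drop the vertex count of the image of $C_j$ by exactly two. This uses the short-cycle restrictions already provided: by Proposition~\ref{cycle}, $S$ could not contain a loop, a parallel pair, or a triangle without already splitting, which excludes precisely the degenerate configurations of $\{e_1, e_2, e_3\}$ that could cause the contractions to merge fewer than three vertices into one. Once that bookkeeping is in place, the edge-count inequality is genuine and the corollary follows.
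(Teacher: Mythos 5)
Your overall strategy is sound, but you chose a different Dodgson than the paper does, and that choice is what creates the one real gap in your bookkeeping. The paper takes $\Psi^{e_1e_2e_3,e_3e_4e_5}$, so that $e_3$ lies in $I\cap J$ and is therefore \emph{deleted} in both minors; the corollary then reduces in one line to Proposition~\ref{prop1} applied to $G\cut e_3$, in which $\{v_1,v_2\}$ is still a two vertex cut with $e_1,e_2$ inside $C_j$ and $e_4,e_5$ outside. No new counting is needed. You instead take $\Psi^{e_1e_2,e_4e_5}_{e_3}$, so that $e_3$ is \emph{contracted} in both minors, and you rerun the count of Proposition~\ref{prop1} with the extra contraction. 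That is a legitimate alternative, and your treatment of the degenerate configurations of $\{e_1,e_2,e_3\}$ (short cycles, handled by Proposition~\ref{cycle}) addresses the right issue on the $\mtwo$ side.

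The gap is on the $\mone$ side. Your claim that the image of $C_j$ in $\mtwo$ has exactly $n-2$ vertices treats the passage from $\mone$ to $\mtwo$ as if only $e_1,e_2$ change status, but $e_4$ and $e_5$ also switch from contracted (in $\mone$) to deleted (in $\mtwo$). Although $e_4,e_5\notin E(C_j)$, contracting them in $\mone$ can identify the two cut vertices $v_1$ and $v_2$ (for instance if they form a $v_1$--$v_2$ path in another full component), which shrinks the image of $C_j$ in $\mone$ by one and leaves it with only a single ``exit'' vertex. In that case the image of $C_j$ in $\mtwo$ has $n-1$ vertices, your upper bound becomes $n-2$ edges, and it no longer contradicts the lower bound of $n-2$. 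The argument is recoverable: with $v_1$ and $v_2$ identified in $\mone$ there is only one exit vertex, so the forest $T\cap E(C_j)$ must be connected and hence have $n-1$ edges, which still exceeds the $\mtwo$ bound. You should either add this case explicitly, or avoid it altogether by switching to the paper's Dodgson $\Psi^{e_1e_2e_3,e_3e_4e_5}$ and citing Proposition~\ref{prop1} directly. (A further minor point: Proposition~\ref{cycle} is stated only for planar graphs, while this corollary is not, so if you rely on it you should note, as the paper does elsewhere, that the cycle half holds without planarity.)
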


\begin{proof} This follows immediately from Proposition \ref{prop1}, as Dodgson $\Psi^{e_1e_2e_3,e_3e_4e_5}$ in $G$ is equal to Dodgson $\Psi^{e_1e_2,e_4e_5}$ in $G \cut e_3$, where $\{v_1,v_2\}$ must still be a two vertex cut (though not necessarily a minimal two vertex cut). \end{proof}

\begin{prop} \label{prop2} Let $G$ be a minor-minimal non-splitting graph with a two vertex cut $\{v_1,v_2\}$ and let $C_j$ be a full component induced by this cut. Let $S$ be a non-splitting 5-configuration such that $E(C_j) \cap S = \{e\}$. Fix a path $P_1$ in $C_j$ from $v_1$ to $v_2$ such that $e \in E(P_1)$. If a similar path exists that does not contain edge $e$, let $P_2$ be a path from $v_1$ to $v_2$, $e \notin P_2$, such that the graph induced by $(E(P_1) \cup E(P_2)) - \{e\}$ does not contain a cycle. Then, $|E(P_1)| \leq 2$. If a path $P_2$ exists, then $|E(P_2) - E(P_1)| = 1$. \end{prop}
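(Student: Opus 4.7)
The plan is to derive a contradiction with minor-minimality via Corollary \ref{cutablecontractable}. Specifically, if $|E(P_1)| \geq 3$, or (when $P_2$ exists) $|E(P_2) - E(P_1)| \geq 2$, I will exhibit an edge in $E(C_j) - \{e\}$ that can be contracted or deleted while keeping $S$ non-splitting, contradicting the assumed minimality of $G$.

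The key observation is that since $E(C_j) \cap S = \{e\}$, for any Dodgson $D \in D_S$ the minors $\mone$ and $\mtwo$ agree on $C_j$ except for the operation applied to $e$ (deletion or contraction, depending on where $e$ sits in $D$). Thus for any $T \in T_D$, the intersection $T \cap E(C_j)$ is a spanning forest of the $e$-modified $C_j$. Because $\{v_1, v_2\}$ separates $C_j$ from the rest of $G$, this forest has one of two ``types'' dictated entirely by the external part of $T$: type A, a spanning tree of the modified $C_j$ (when $v_1, v_2$ are not connected externally by $T$), or type B, a two-component spanning forest with $v_1, v_2$ in different components (when $v_1, v_2$ are connected externally). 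The crucial consequence is that $T \cap E(C_j)$ may be freely replaced by any other spanning forest of the same type to produce another tree in $T_D$.

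For the first claim, assume $|E(P_1)| \geq 3$ and pick any $f \in E(P_1) - \{e\}$. For each Dodgson $D$, note that $E(P_1) - \{e\}$ is a forest of one or two subpaths whose endpoints lie in $\{v_1, v_2\}$ together with the endpoints of $e$, so it is compatible with either type. I would extend $E(P_1) - \{e\}$ to a spanning forest of the $e$-modified $C_j$ of whichever type the external part of $T$ demands; this yields a replacement tree $T' \in T_D$ with $f \in T'$. By Corollary \ref{cutablecontractable}, $f$ is then contractable, contradicting minor-minimality. The reason the conclusion stops at $|E(P_1)| \leq 2$ rather than $\leq 1$ is that when $E(P_1) - \{e\}$ contains only a single edge, the forest-extension argument may be constrained by the type requirement in one or more Dodgsons: the sole non-$e$ edge of $P_1$ can be forced into or out of $T$ depending on $D$, and so is neither universally contractable nor universally deletable.

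The second claim is handled analogously via deletion. Under the stated acyclicity of $(E(P_1) \cup E(P_2)) - \{e\}$, removing any $f \in E(P_2) - E(P_1)$ still leaves a forest connecting $v_1$ and $v_2$ within $C_j$, which can be extended (as above) to a spanning forest of the appropriate type in the modified $C_j$, producing a tree in $T_D$ avoiding $f$. The hard part will be the type-by-type case analysis: for each of the thirty Dodgsons in $D_S$ and each of the two types A and B, I must verify that the forest $E(P_1) - \{e\}$ (or $(E(P_1) \cup E(P_2)) - \{e, f\}$) really does extend to a spanning forest of the required shape. Particular care is needed when $e$ is a bridge of $C_j$, which renders type A impossible in $C_j \cut e$, and when $e$ is incident to $v_1$ or $v_2$, which causes identifications in $C_j \contract e$; the hypothesized acyclicity of $(E(P_1) \cup E(P_2)) - \{e\}$ is exactly what guarantees the extensions go through in the remaining cases.
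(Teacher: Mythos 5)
Your overall strategy --- replace the $C_j$-portion of a common spanning tree by another spanning forest of the same ``type'' (connected, or two components separating $v_1$ from $v_2$) and invoke Corollary \ref{cutablecontractable} --- is exactly the paper's. But two steps fail as written. First, the claim that $E(P_1)-\{e\}$ ``is compatible with either type'' is false when $e$ is contracted: in $C_j \contract e$ the two subpaths of $P_1$ minus $e$ join into a single $v_1$--$v_2$ path, so they cannot all lie in a disconnected (type B) forest. Taken literally, your extension step would place every edge of $E(P_1)-\{e\}$ into a replacement tree for every Dodgson, proving every such edge contractable and hence $|E(P_1)|\leq 1$; that is too strong, and is inconsistent with the minor-minimal examples with two-vertex cuts in Figure \ref{regfamily}. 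The repair is the paper's: designate one sacrificial edge $f_1\in E(P_1)-\{e\}$ to be omitted from the replacement precisely in the contracted-$e$, disconnected case; any \emph{other} edge of $P_1$ can always be kept and is therefore contractable, which is what produces the bound $2$ rather than $1$. Your closing remark about the single-edge case points at the right obstruction but contradicts your own compatibility claim rather than repairing the argument.

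Second, and more seriously, the bound $|E(P_2)-E(P_1)|=1$ cannot be obtained by deletion. The set $(E(P_1)\cup E(P_2))-\{e\}$ is a tree whose unique $v_1$--$v_2$ path is $P_2$, so removing $f\in E(P_2)-E(P_1)$ \emph{does} disconnect $v_1$ from $v_2$ in it; the assertion on which you base this step is false. Concretely, for a Dodgson in which $e$ is deleted in both minors and the restriction of the tree to $C_j$ must be connected, the edge $f$ may be genuinely indispensable (for instance when $C_j$ is exactly $P_1\cup P_2$), so $f$ need not be deletable. The paper instead \emph{contracts}: fixing one reserved edge $f_2\in E(P_2)-E(P_1)$ (kept available for exactly the deleted-$e$, connected case), every replacement forest can be chosen to contain all of $E(t_C)-\{f_1,f_2\}$ for a spanning tree $t_C\supseteq (E(P_1)\cup E(P_2))-\{e\}$, so every other edge of $E(P_2)-E(P_1)$ is contractable. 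You also leave untreated the case in which no path $P_2$ exists and the degenerate case $e=\{v_1,v_2\}$, both of which the paper handles separately.
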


\begin{proof} First, suppose that path $P_2$ exists. Fix a spanning tree $t_C$ of $C_i$ such that $$(E(P_1) \cup E(P_2)) - \{e\} \subseteq t_C.$$ Suppose further that $|E(P_1)| \geq 2$. Let $f_1$ be an edge on path $P_1$, $e \neq f_1$. Let $f_2 \in E(P_2) - E(P_1)$. Take an arbitrary Dodgson $D \in D_S$ and tree $T \in T_D$. As $f_1$ may or may not lie on path $P_2$, both cases must be considered, as shown in Figure \ref{2cuttrees}.

\begin{figure}[h]
  \centering
      \includegraphics[scale=1.0]{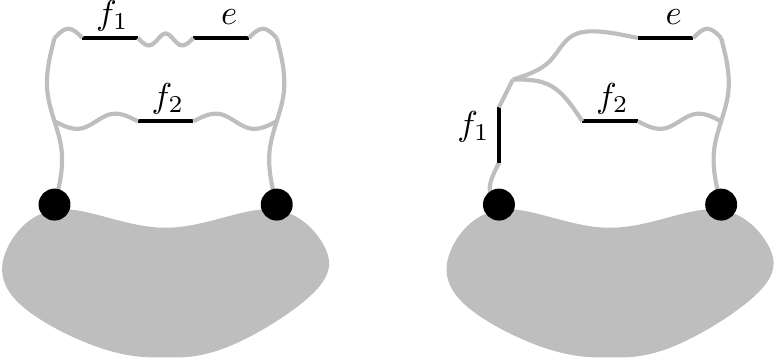}
  \caption{Possible two vertex cut full components up to symmetry, assuming path $P_2$ exists. Arbitrary paths, coloured grey, may have length zero.}
\label{2cuttrees}
\end{figure}

In either case, the edge set $T \cap E(C_j)$ either induces a connected graph or a graph with two connected components in minors $\mone$ and $\mtwo$. Since $|T \cap E(C_j)|$ is constant, the number of components in $C_j$ cannot change between minors $\mone$ and $\mtwo$ in which edge $e$ is deleted or contracted in both minors. We will show that we can create a tree in $T_D$ by replacing edge set $T \cap E(C_j)$ with a subset of $E(t_C)$.

\startingcases
\begin{case} Dodgson $D$ has minors $\mone^{\cut e}$ and $\mtwo_{\contract e}$. \end{case}

Figure \ref{2cuttrees1} shows the two minors with edge $f_2$ deleted in both cases. Let $|V(C_j)| = v$. Then there are $v$ vertices in this full component as it appears in $\mone$, and $v-1$ vertices in this full component as it appears in $\mtwo$. Hence, tree $T$ has a bound on the edges $|T \cap E(C_j)| \leq v-2$, and so $T \cap E(C_j)$ must induce a graph with precisely two connected components. In $C_j$ as it appears in the minors, there will be a path from $v_1$ to $v_2$ in $\mtwo$ and no such path in $\mone$. We may thus generate a tree in $T_D$ by replacing edges $T \cap E(C_j)$ with edges $E(t_C) - \{f_2\}$ in $T$.

\begin{figure}[h]
  \centering
      \includegraphics[scale=0.7]{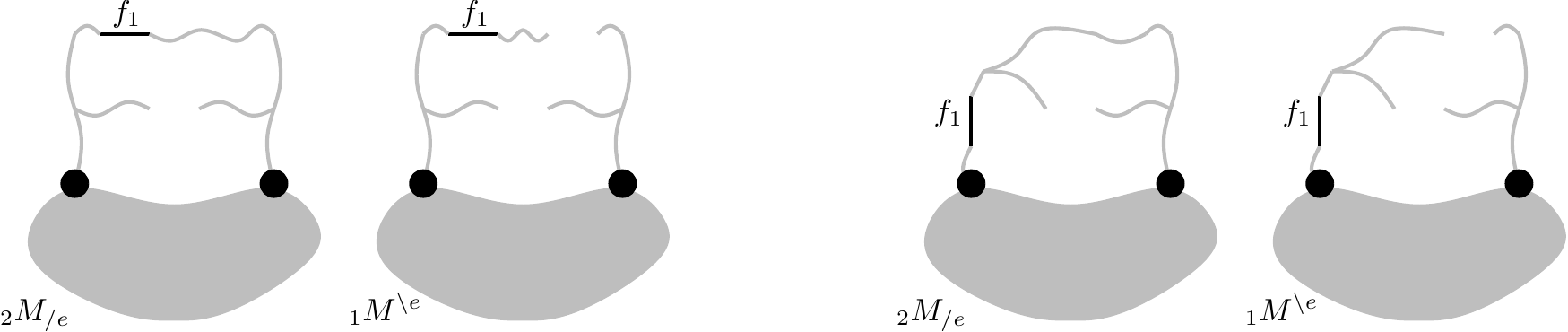}
  \caption{Minors $\mone$ and $\mtwo$ of both possible cases.}
\label{2cuttrees1}
\end{figure}

\begin{case} Dodgson $D$ has minors $\mone_{\contract e}$ and $\mtwo_{\contract e}$. \end{case}

Figure \ref{2cuttrees2} shows ways to replace $T \cap E(C_j)$ whether or not the graph induced by $T \cap E(C_j)$ is connected in minors $\mone$ and $\mtwo$. That is, if $T \cap E(C_j)$ is connected in $\mone$ and $\mtwo$, we may replace it with $E(t_C) - \{f_2\}$. If $T \cap E(C_j)$ is not connected, replace it with $E(t_C) - \{f_1,f_2\}$.

\begin{figure}[h]
  \centering
      \includegraphics[scale=0.7]{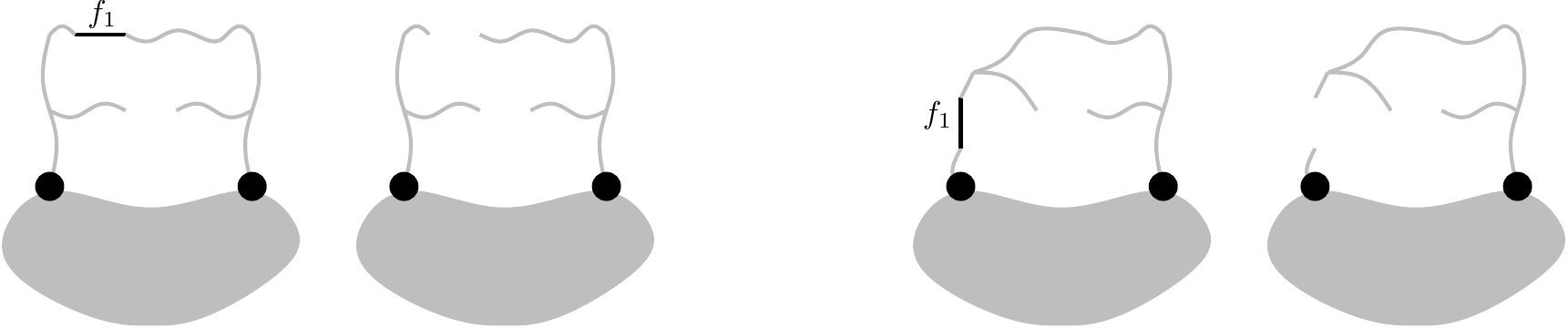}
  \caption{Edge sets that allow for this full component to be either connected or not when edge $e$ is contracted in both minors.}
\label{2cuttrees2}
\end{figure}

\begin{case} Dodgson $D$ has minors $\mone^{\cut e}$ and $\mtwo^{\cut e}$. \end{case}

These cases are shown in Figure \ref{2cuttrees3}. If $T \cap E(C_j)$ is connected in $\mone$ and $\mtwo$, we may replace it with $E(t_C)$. If $T \cap E(C_j)$ is not connected, replace it with $E(t_C) - \{f_2\}$. Note in particular that this is the only case in which we may want to include edge $f_2$ in the replacement. 

\begin{figure}[h]
  \centering
      \includegraphics[scale=0.7]{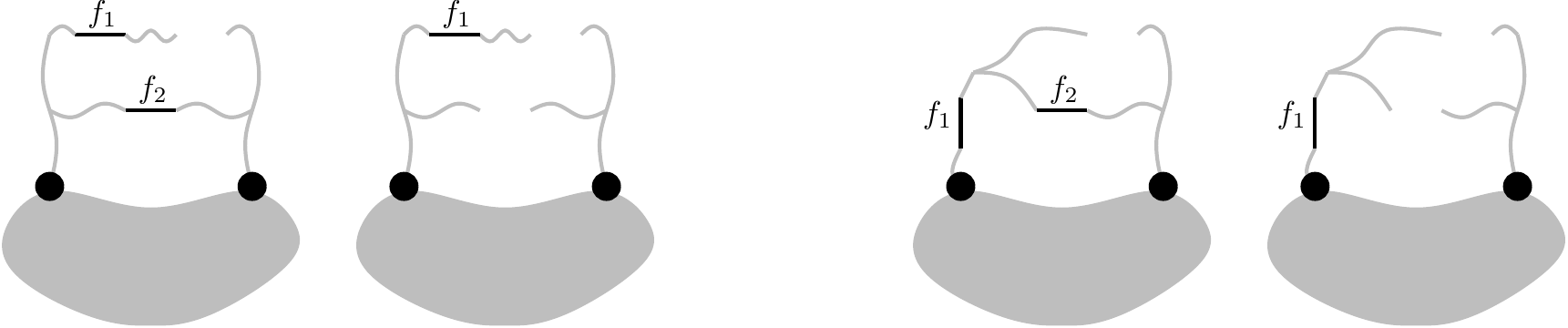}
  \caption{Edge sets that allow for this full component to be either connected or not when edge $e$ is deleted in both minors.}
\label{2cuttrees3}
\end{figure}

\exitingcases

In any case, every edge in $E(C_j) - (E(t_C) \cup \{e\})$ can be deleted by Theorem \ref{cutablecontractable} and similarly any edge in $E(t_C) - \{e,f_1,f_2\}$ can be contracted. Thus, the path $P_1$ can contain at most two edges in a minor-minimal non-splitting graph. If a path $P_2$ exists, then $|E(P_2) - E(P_1)| = 1$. 

Suppose now that there is no path $P_2$. As such, every path in $C_j$ from $v_1$ to $v_2$ must include edge $e$, and path $P_1$ must contain at least two edges. Again, we select an edge $f_1 \in E(P_1)$ such that $f_1 \neq e$. Up to symmetry, this is shown in Figure \ref{2cutnop2}.

\begin{figure}[h]
  \centering
      \includegraphics[scale=1.0]{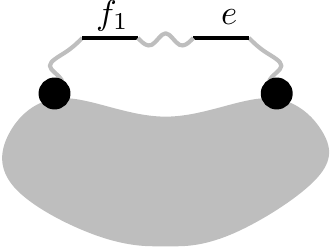}
  \caption{The possible full component, up to symmetry, if path $P_2$ does not exist. Arbitrary paths, coloured grey, may again have length zero.}
\label{2cutnop2}
\end{figure}

Construct a tree $t_C$ in $C_j$ such that $E(P_1) \subseteq E(t_C)$. Let $D \in D_S$ be arbitrary, and pick a $T \in T_D$. If up to relabelling edge $e$ is deleted in minor $\mone$ and contracted in $\mtwo$, then as in Case 1 we may replace $T \cap E(C_j)$ with $E(t_C) - \{e\}$. If $e$ is deleted in both minors, the path $P_1$ no longer connects $v_1$ and $v_2$ in $C_j$ in either minor, and we can replace $T \cap E(C_j)$ with $E(t_C) - \{e\}$ to create a tree in $T_D$. If $e$ is contracted in both minors, we can replace $T \cap E(C_j)$ with either $E(t_C) - \{e\}$ or $E(t_C) - \{e,f_1\}$, depending on whether or not $T \cap E(C_j)$ is connected in $\mone$ and $\mtwo$. Then, we may delete all edges in $E(C_j) - E(t_C)$ and contract all edges in $E(P_1) - \{e,f_1\}$. Again, $|E(P_1)| = 2$.

We now consider the case that $|E(P_1)|<2$. Suppose that edge $e = \{v_1,v_2\}$. Then, it is not possible to choose an edge $f_1$. In this case, though, a path $P_2$ must exist since this two vertex cut was nontrial and minimal. Then, the proof is similar; we potentially keep edge $f_2$ in the tree to maintain connectivity when $e$ is deleted in minors $\mone$ and $\mtwo$, and delete it in all other cases. We could again contract up all edges in $E(P_2) - \{f_2\}$, and as such $V(C_j) = \{v_1,v_2\}$, contradicting the fact that this is a connected component induced by the two vertex cut $\{v_1,v_2\}$. Whence, this situation cannot arise. \end{proof}

\begin{corollary} \label{14split} Let $G$ be a minor-minimal non-splitting graph with a two vertex cut $\{v_1,v_2\}$. Let $S$ be any non-splitting 5--configuration. Then the graph induced by this cut set has precisely two connected components, $G_1$ and $G_2$, and the full components $C_1$ and $C_2$, up to relabelling, have $|E(C_1) \cap S| = 1$ and $|E(C_2) \cap S| = 4$. Furthermore, if $\{v_1,v_2\} \in E(G)$, then it is not in $S$. \end{corollary}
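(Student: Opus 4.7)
The plan is to combine Corollary \ref{corprop1} with a minor-minimality reduction lemma that rules out any full component containing no edges of $S$. Corollary \ref{corprop1} immediately gives $|E(C_j) \cap S| \in \{0,1,4,5\}$ for every full component: the values $2$ and $3$ would place at least two edges of $S$ inside $C_j$ and at least two outside, forcing $S$ to split. Moreover, at most one $C_j$ can carry four or more edges of $S$.

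Next I would prove the reduction lemma: no full component has $|E(C_j) \cap S| = 0$. Adapting the Case~1 argument of the preceding cut-vertex theorem, fix a spanning tree $t_C$ of $C_j$ and a $v_1$-to-$v_2$ path $P \subseteq E(t_C)$. For any Dodgson $D \in D_S$ and any tree $T \in T_D$, the restriction $T \cap E(C_j)$ is either a spanning tree of $C_j$ or a two-piece forest with $v_1$ and $v_2$ in different pieces, since $C_j$ attaches to the rest of $G$ only through $v_1$ and $v_2$. In each case one exchanges $T \cap E(C_j)$ for $E(t_C)$ or for $E(t_C) \setminus \{f\}$ (with $f \in P$ chosen to respect the current $v_1/v_2$ partition of $V(C_j)$), producing another element of $T_D$. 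By Corollary \ref{cutablecontractable} every edge of $E(C_j) - E(t_C)$ may then be deleted, every edge of $E(t_C) - P$ contracted, and a further swap argument along $P$ collapses $P$ down to the edge $v_1 v_2$, forcing $G_j$ to be empty and contradicting minor-minimality.

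With these two facts, each $C_j$ carries $1$, $4$, or $5$ edges of $S$, and $\sum_j |E(C_j) \cap S|$ equals $5$ when $v_1 v_2 \notin S$ and $4 + k$ when $v_1 v_2 \in S$, where $k \geq 2$ is the number of full components (the edge $v_1 v_2$, when present in $G$, lies in every $C_j$). When $v_1 v_2 \notin S$, the only distributions consistent with $k \geq 2$ are the desired $(4,1)$ on two components and a putative $(1,1,1,1,1)$ on five; I would rule out the latter by first invoking Proposition \ref{prop2} (each thin component is then a triangle or a length-two path on $\{v_1, v_2, w_i\}$) and then exhibiting a vanishing Dodgson such as $\Psi^{e_1 e_2, e_3 e_4}_{e_5}$: the contractions in the two minors produce enough parallel copies of $v_1 v_2$ to prevent any common spanning tree, in the spirit of the counting argument in Proposition \ref{prop1}. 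When $v_1 v_2 \in S$, the arithmetic forces exactly one $C_j$ to contain all of $S$ while every other $C_i$ has only $v_1 v_2$ in $S$; applying the reduction lemma to the non-$S$ edges of any such $C_i$ collapses $C_i$ to the bare edge $v_1 v_2$ and again forces $G_i$ to be trivial, a contradiction.

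The hard part will be the two-piece-forest subcase of the reduction lemma: the substitute $E(t_C) \setminus \{f\}$ must still lie in $T_D$, which requires a choice of $f \in P$ that matches the $v_1/v_2$ partition imposed on the internal vertices of $C_j$ by the external portion of $T$. Whether such an $f$ always exists depends on how the partition interacts with the linear ordering of $P$, and iterating the swap to shrink $P$ down to a single edge is a careful induction. Once that lemma is secured, the remaining distributional arithmetic and the small $(1,1,1,1,1)$ Dodgson check are routine extensions of Proposition \ref{prop1}.
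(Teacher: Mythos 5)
Your outline follows the paper's proof of Corollary \ref{14split} quite closely: full components with two or three edges of $S$ are ruled out by Corollary \ref{corprop1}, components with no edge of $S$ are ruled out by a tree-swap argument against minor-minimality, and the remaining arithmetic together with Proposition \ref{prop2} disposes of the case $\{v_1,v_2\}\in S$. Three remarks. First, the step you flag as the hard part is not actually hard: since $C_j$ meets the rest of $G$ only in $\{v_1,v_2\}$, the portion of $T$ outside $E(C_j)$ imposes no partition at all on the internal vertices of $C_j$ --- the only datum visible from outside is whether $v_1$ and $v_2$ are joined inside $C_j$. Hence $E(t_C)\setminus\{f\}$ for one \emph{fixed} edge $f$ on the tree path between $v_1$ and $v_2$ is always a valid substitute in the two-piece case (it has the right number of edges, spans $V(C_j)$, and separates $v_1$ from $v_2$); no choice of $f$ adapted to $T$ is needed, and no induction along $P$ is needed either, because every substitute contains all of $E(t_C)\setminus\{f\}$, which is nonempty since $G_j\neq\emptyset$, so a single application of Corollary \ref{cutablecontractable} already contradicts minimality. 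This is exactly what the paper does. Second, you are more careful than the paper in explicitly excluding the distribution $(1,1,1,1,1)$, which the paper passes over silently; your Dodgson $\Psi^{e_1e_2,e_3e_4}_{e_5}$ does vanish once the singleton components are reduced via Proposition \ref{prop2}, but a cleaner route avoids Proposition \ref{prop2} entirely: the counting proof of Proposition \ref{prop1} uses only that the subgraph in question meets the rest of $G$ in $\{v_1,v_2\}$, so it applies verbatim to the \emph{union} of two of the five full components, which contains two edges of $S$ and misses two others. Third, in the case $\{v_1,v_2\}\in S$ your reduction lemma does not literally apply to the components whose only $S$-edge is $\{v_1,v_2\}$ (that edge is deleted or contracted in the Dodgson minors, changing how the component attaches), so you need either the mild extension of the swap argument you gesture at or, as the paper does, the final paragraph of Proposition \ref{prop2}, which shows such a component collapses.
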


\begin{proof} We will first restrict the number of edges in $S$ that may appear in any full component.

\startingcases
\begin{case} Full component $C_j$ contains no edges in $S$. \end{case}

Fix a spanning tree of $C_j$, call it $t_C$. Since this tree must contain a path from $v_1$ to $v_2$, let $e \in E(t_C)$ be an edge that lies on this path. For a Dodgson $D \in D_S$ and tree $T \in T_D$, vertices $v_1$ and $v_2$ may or may not be connected in the $C_j$-spanning graph induced by $T \cap E(C_j)$. If $v_1$ and $v_2$ are connected in the graph induced by $E(C_j) \cap T$, there is a tree $T' \in T_D$ induced by edge set $((E(G) - E(C_j)) \cap T) \cup E(t_C)$. If $v_1$ and $v_2$ are not connected in the graph induced by $E(C_j) \cap T$, there is a similar tree $T' \in T_D$ induced by edge set $((E(G) - E(C_j)) \cap T) \cup (E(t_C) - \{e\})$. As Dodgson $D$ was arbitrary, we may contract every edge in $E(t_C) - \{e\}$, contradicting minor-minimality.

\begin{case} Full component $C_j$ contains two or three edges in $S$. \end{case}

By Corollary \ref{corprop1}, there will be a Dodgson that will split given the positioning of the edges in $S$.

\exitingcases

It follows that all full components must contain either one, four, or five of the edges in $S$. If $\{v_1,v_2\} \notin E(G)$ or $\{v_1,v_2\} \in E(G)$ but $\{v_1,v_2\} \notin S$, it must be the case that there are two full components; a full component containing one of the edges in $S$ and a full component the remaining four edges in $S$. If $\{v_1,v_2\} \in S$, it could be the case that there are any number of full components containing one edge in $S$, and one full component containing five edges in $S$. Each full component containing one edge in $S$ would then have a non-trivial path from $v_1$ to $v_2$ that contains no edges in $S$ and is disjoint from a path in the full component containing edge $\{v_1,v_2\}$. By Proposition \ref{prop2}, we may contract all edges except for one on this path, contradicting minor-minimality. Hence we must have precisely two connected components, one containing one edge of $S$, the other containing the remaining four. \end{proof}

\begin{corollary} \label{trickythreecut} Let $G$ be a graph with a three vertex cut, $S \subseteq E(G)$ a 5-configuration, and $e_1 \in S$. If $G \cut e_1$ or $G \contract e_1$ has a two vertex cut that produces two connected components such that one full component contains precisely two of the remaining edges in $S$, then $S$ splits. \end{corollary}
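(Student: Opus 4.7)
The plan is to apply Proposition~\ref{prop1} inside whichever of $G \cut e_1$, $G \contract e_1$ carries the postulated two-vertex cut, and then translate the vanishing Dodgson that results back to a Dodgson of $G$ in $D_S$. Write $S = \{e_1,e_2,e_3,e_4,e_5\}$, and let $G'$ denote the modified graph whose two-vertex cut $\{v_1,v_2\}$ produces full components $C_1, C_2$ with $|E(C_1) \cap (S \setminus \{e_1\})| = 2$. Since every edge of $G'$ lies in $E(C_1) \cup E(C_2)$, the two edges of $S \setminus \{e_1\}$ that are not in $E(C_1)$ must lie in $E(C_2) \setminus E(C_1)$; after relabelling I would assume $e_2, e_3 \in E(C_1)$ and $e_4, e_5 \notin E(C_1)$. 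Proposition~\ref{prop1}, applied in $G'$ with full component $C_1$ and four-edge set $\{e_2,e_3,e_4,e_5\}$, then yields $\Psi^{e_2 e_3,\, e_4 e_5} = 0$ in $G'$.

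To conclude that $S$ splits in $G$, I would translate this vanishing into a vanishing Dodgson in $D_S$. By Remark~\ref{alwayscut}, placing an edge in $I \cap J$ deletes it in both minors used for the Dodgson computation, while placing it in $K$ contracts it in both. Consequently, if $G' = G \cut e_1$ then the Dodgson $\Psi^{e_2 e_3,\, e_4 e_5}$ of $G'$ is literally the same polynomial as $\Psi^{e_1 e_2 e_3,\, e_1 e_4 e_5}$ of $G$, which belongs to $D_S$ (it has $|I| = |J| = 3$, $|K| = 0$, $|I \cap J| = 1$); and if $G' = G \contract e_1$ then $\Psi^{e_2 e_3,\, e_4 e_5}$ of $G'$ equals $\Psi^{e_2 e_3,\, e_4 e_5}_{e_1}$ of $G$, again in $D_S$ (with $|I| = |J| = 2$, $|K| = 1$, $|I \cap J| = 0$). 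Either way a Dodgson in $D_S$ vanishes, so $S$ splits. The argument is essentially immediate once Proposition~\ref{prop1} is in hand; the only step requiring any care is verifying the Dodgson translation between $G$ and $G'$, which is a direct consequence of the definition together with Remark~\ref{alwayscut}. Note that the three-vertex cut hypothesis on $G$ plays no active role in this proof beyond motivating how such a two-vertex cut in $G \cut e_1$ or $G \contract e_1$ naturally arises.
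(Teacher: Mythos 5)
Your proposal is correct and follows essentially the same route as the paper: apply Proposition~\ref{prop1} in $G \cut e_1$ (resp.\ $G \contract e_1$) to get $\Psi^{e_2e_3,e_4e_5} = 0$ there, then identify the minors of that Dodgson with those of $\Psi^{e_1e_2e_3,e_1e_4e_5}$ (resp.\ $\Psi^{e_2e_3,e_4e_5}_{e_1}$) in $G$, both of which lie in $D_S$. You in fact spell out the contraction case more explicitly than the paper, which merely remarks that "the proof is similar."
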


\begin{proof} Suppose deleting edge $e_1$ produces a graph with a two vertex cut, and this two vertex cut induces a full component $C_1$ containing $e_2,e_3 \in S$ but no other edges in $S$. By Proposition \ref{prop1}, the Dodgson $\Psi^{e_2e_3,e_4e_5} = 0$ in $G \cut e_1$. As the minors are isomorphic, Dodgson $\Psi^{e_1e_2e_3,e_1e_4e_5} = 0$ in $G$. Hence $S$ splits.

If contracting edge $e_1$ produces a graph with such a two vertex cut, the proof is similar, as by Proposition \ref{prop1} the Dodgson $\Psi^{e_2e_3,e_4e_5} = 0$ in $G \contract e_1$. \end{proof}

\begin{corollary} \label{howitlooks} Let $G$ be a minor-minimal non-splitting graph and $S$ a non-splitting 5-configuration. Suppose $G$ has a two vertex cut at vertices $v_1$ and $v_2$. Then, the full component with precisely one edge in $S$ must be a proper subgraph of the full component seen in Figure \ref{fig2cutminor}. \end{corollary}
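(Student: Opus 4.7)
The plan is to pin down the structure of the single-edge full component directly from the two key results already proved in this chapter: Corollary \ref{14split}, which says that the two-vertex cut produces exactly two full components $C_1$ and $C_2$ with $|E(C_1) \cap S| = 1$ and $|E(C_2) \cap S| = 4$ (and the cut edge $\{v_1,v_2\}$, if present, is not in $S$); and Proposition \ref{prop2}, which directly restricts the lengths of $v_1$-$v_2$ paths through the small full component.

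First I would fix the distinguished edge $e \in E(C_1) \cap S$ guaranteed by Corollary \ref{14split}, choose a path $P_1$ in $C_1$ from $v_1$ to $v_2$ containing $e$, and, if it exists, a second path $P_2$ from $v_1$ to $v_2$ avoiding $e$ such that $(E(P_1) \cup E(P_2)) \setminus \{e\}$ is acyclic. Proposition \ref{prop2} then forces $|E(P_1)| \leq 2$ and $|E(P_2) \setminus E(P_1)| = 1$ whenever $P_2$ exists. Already this reduces the skeleton of $C_1$ to a short list of shapes on the cut vertices $\{v_1, v_2\}$ plus at most one internal vertex, and exhausting these against Figure \ref{fig2cutminor} will reveal that each possibility is contained in the displayed component.

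Next I would argue that every vertex and edge of $C_1$ lies on $P_1 \cup P_2$. Any extra vertex $w$ would, by connectivity of $C_1$ together with Menger's Theorem applied to the two-connected block, lie on some additional $v_1$-$v_2$ route $P_3$ in $C_1$; re-applying Proposition \ref{prop2} to the pair $(P_1, P_3)$ (with $P_3$ chosen so $(E(P_1) \cup E(P_3)) \setminus \{e\}$ is acyclic) and using the tree-replacement construction embedded in that proof, I would show that every edge of $C_1$ outside a spanning tree containing $P_1 \cup P_2$ can be deleted while preserving non-splittingness of $S$, and every edge of that spanning tree not on $P_1 \cup P_2$ can be contracted. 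Corollary \ref{cutablecontractable} together with minor-minimality then yields $C_1 \subseteq P_1 \cup P_2$, after which comparison with Figure \ref{fig2cutminor} delivers proper containment; the strictness is forced because equality would equip $C_1$ with the edge $\{v_1,v_2\}$ in $S$, which Corollary \ref{14split} forbids.

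The main obstacle I anticipate is the vertex-elimination step: Proposition \ref{prop2} only directly constrains a chosen pair of paths, so extending its conclusion to the entire component requires iterating the replacement-tree construction from its proof, handling each case for whether $e$ is deleted or contracted in the two minors $\mone$ and $\mtwo$, and verifying at each step that no new cycle or disconnection is introduced in the minors used to calculate the relevant Dodgson polynomial. Once that bookkeeping is done, the enumeration of the surviving shapes against Figure \ref{fig2cutminor} is routine.
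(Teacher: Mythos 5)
Your proposal follows essentially the same route as the paper: invoke Corollary \ref{14split} to isolate the full component $C_1$ with the single edge $e_1 \neq \{v_1,v_2\}$, then apply Proposition \ref{prop2} to force $|E(P_1)| \leq 2$ and $|E(P_2) \setminus E(P_1)| = 1$, and enumerate the resulting shapes against Figure \ref{fig2cutminor}. The vertex-and-edge elimination step you flag as the main obstacle is already carried out inside the proof of Proposition \ref{prop2} (which deletes every edge of $C_1$ outside a spanning tree containing $P_1 \cup P_2$ and contracts every tree edge off $\{e_1,f_1,f_2\}$), so the paper simply cites that proposition where you propose to redo the replacement-tree bookkeeping.
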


\begin{figure}[h]
  \centering
      \includegraphics[scale=1.0]{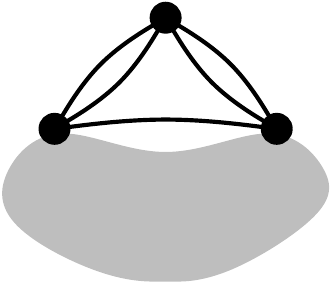}
  \caption{A full component such that any minor-minimal non-splitting graph with a two vertex cut has a proper subgraph of this as a full component.}
\label{fig2cutminor}
\end{figure}

\begin{proof} By Corollary \ref{14split}, there must be two full components created by this two vertex cut, say $C_1$ and $C_2$, such that one full component contains precisely one edge in $S$. Without loss of generality, suppose $|E(C_1) \cap S| = \{e_1\}$. By Corollary \ref{14split}, $e_1 \neq \{v_1,v_2\}$. Let $P_1$ be a path from $v_1$ to $v_2$ in $C_1$ that contains edge $e_1$, and, if it exists, let $P_2$ be a similar path that does not contain edge $e_1$. Then, from Proposition \ref{prop2}, minor-minimality of $G$ means that path $P_1$ must have precisely two edges, and path $P_2$, if it exists, must have a single edge that is not in $P_1$. In any case, this must be a subgraph of the full component in Figure \ref{fig2cutminor}, as desired. 

Specifically, if a path $P_2$ exists, full component $C_1$ must be isomorphic to one of the full components shown in Figure \ref{yesp2}. If no such path exists, then full component $C_1$ must be isomorphic to the full component in Figure \ref{nop2}. \end{proof}

\begin{figure}[h]
  \centering
      \includegraphics[scale=1.0]{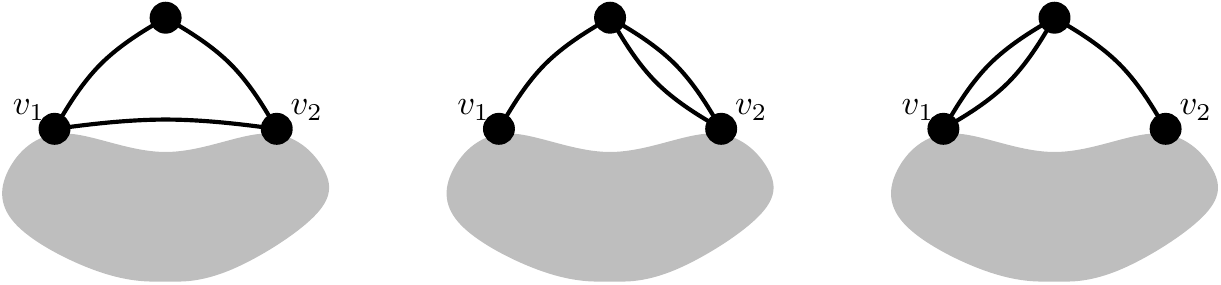}
  \caption{Possible full components induced by a two vertex cut if a path exists between vertices $v_1$ and $v_2$ that does not include edge $e_1$.}
\label{yesp2}
\end{figure}

\begin{figure}[h]
  \centering
      \includegraphics[scale=1.0]{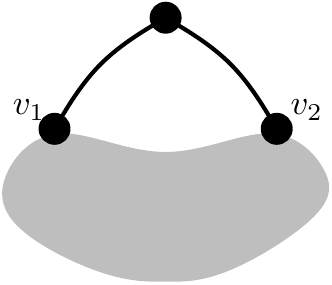}
  \caption{A full component induced by a two vertex cut if no such path exists.}
\label{nop2}
\end{figure}


%
%

\chapter{Full Component Constructions}
\label{trisub}

In this chapter, all graphs are assumed to be simple and 3-connected graphs. We prove that if such a graph is non-splitting then it must have a minor isomorphic to $H$, $O$, $C$, $K_5$, or $K_{3,3}$.

\begin{prop} \label{small} Let $G$ be a planar graph with at most five vertices. Then $G$ splits. \end{prop}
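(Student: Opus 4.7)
My plan is to find, for every 5-configuration $S \subseteq E(G)$, some Dodgson $D \in D_S$ whose common-tree set $T_D$ is empty, so that $D = 0$ by Proposition \ref{calcdodgson}.

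First I would reduce to the case that the vertex set spanned by $S$ is all of $V(G)$. If the edges of $S$ span at most four vertices, then since $G$ is simple, those four vertices carry at most $\binom{4}{2} = 6$ edges, so the five edges of $S$ form $K_4$ with at most one edge missing; $K_4 \setminus e$ still contains a triangle, and $S$ splits by Proposition \ref{cycle}. Under the chapter's standing hypothesis that graphs are simple and 3-connected, this disposes of $|V(G)| = 4$ (where $G = K_4$) and restricts the five-vertex possibilities to the wheel $W_4$ and $K_5 - e$, since $K_5$ is non-planar and these are the only other simple 3-connected planar graphs on five vertices.

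Second, with $V(G_S) = V(G)$ and $|V(G)| = 5$, the subgraph $G_S$ on the edges of $S$ has five vertices, five edges, and no isolated vertex. A short component count (each component needs at least two vertices, and a simple graph on $k$ vertices carries at most $\binom{k}{2}$ edges) forces $G_S$ to be connected, hence to contain a unique cycle of length $\ell \in \{3,4,5\}$. If $\ell = 3$, Proposition \ref{cycle} applies immediately. For $\ell \in \{4,5\}$ I would work inside $W_4$ and $K_5 - e$ case by case, using their automorphism groups to collapse the list of configurations to a handful of representatives. In each representative, either the three edges incident to some degree-3 vertex of $G$ all lie in $S$ (a 3-edge cut, and splitting by Proposition \ref{cycle}), or I exhibit an edge $e \in E(G) - S$ together with sets $I, J, K \subseteq S$ of the appropriate sizes such that the deletions defining $\mone$ leave $e$ as the unique edge at one of its endpoints (so $e$ is a bridge in $\mone$, forced into every spanning tree), while the contractions defining $\mtwo$ identify both endpoints of $e$ (so $e$ becomes a loop in $\mtwo$, excluded from every spanning tree). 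No subset of $E(G) - S$ can then span both minors, giving $T_D = \emptyset$.

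The main obstacle is to realize this bridge-and-loop mechanism in every configuration that escapes both the triangle and the 3-edge-cut criteria of Proposition \ref{cycle}. Up to automorphism of $W_4$ or $K_5 - e$, these configurations reduce to a 4-cycle with a single pendant edge and a Hamiltonian 5-cycle, and the remaining verifications are finite and explicit, in line with the thesis's convention of delegating such routine finite checks to direct computation.
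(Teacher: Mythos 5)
Your proposal is correct in outline, but it takes a genuinely different and considerably more laborious route than the paper. The paper's proof is two lines: every simple planar graph on five vertices is a spanning subgraph of $K_5 - e$, every simple graph on at most four vertices is a minor of $K_4$, these two graphs split (a finite check of the kind the thesis delegates to computation), and splitting is minor-closed (Theorem \ref{minorclosedproperty}, via Corollary \ref{cutablecontractable}), so $G$ splits. You instead classify the 5-configurations directly: reducing to $W_4$ and $K_5 - e$, disposing of configurations spanning at most four vertices or containing a short cycle or a degree-three star via Proposition \ref{cycle}, and exhibiting a vanishing Dodgson for the remaining orbits by a bridge-in-$\mone$, loop-in-$\mtwo$ argument. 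Your reductions are sound (the component count forcing $G_S$ connected with a unique cycle is right, and $W_4$ and $K_5-e$ are indeed the only simple 3-connected planar graphs on five vertices), and since $E(G)-S$ has only three or four edges, the residual verification that $T_D=\emptyset$ is trivially finite even if the specific bridge-and-loop pattern you predict does not materialize in every orbit. What your approach buys is an explicit, human-checkable explanation of \emph{why} each configuration splits; what it costs is the case analysis, and it overlooks that minor-closedness lets you collapse the entire proposition to a single computer check on $K_5 - e$, which is exactly the shortcut the paper takes. One small caution: as stated, your last step asserts that up to automorphism only two representative configurations remain, whereas the number of $\mathrm{Aut}(G)$-orbits of embeddings of a 4-cycle-plus-pendant-edge into $W_4$ or $K_5-e$ may exceed one per isomorphism type of $G_S$; this does not affect correctness since the check remains finite, but the count of representatives should not be taken at face value.
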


\begin{proof} If $G$ has five vertices, it must be a spanning subgraph of $K_5$ minus an edge. This graph splits. Any simple graph with at most four vertices must be a minor of $K_4$, which also splits. Hence, $G$ splits. \end{proof}

It immediately follows from Proposition \ref{small} and Theorem \ref{notfour} that any non-splitting graph $G$ that is 3-connected, planar, and $O$-free must have a three vertex cut that disconnects the graph. Whence, any 3-connected minor-minimal non-splitting graphs not isomorphic to $K_5$, $K_{3,3}$, $C$, $H$, or $O$ must contain a three vertex cut.

Consider the graph in Figure \ref{s2new}, call it $S_2$. Labelled as in this figure, we are interested in graphs in which $S_2$ appears as a full component, specifically such that vertices $v_1$, $v_2$, and $v_3$ form the vertex cut set that isolates this component.

\begin{figure}[h]
  \centering
    \includegraphics[scale=1.0]{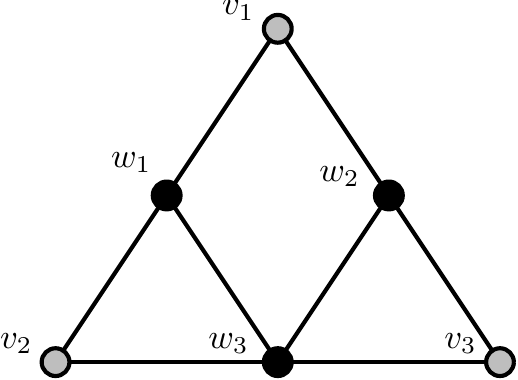}
  \caption{$S_2$}
\label{s2new}
\end{figure}

\begin{definition} Consider a graph $G$ with a three vertex cut. Let $C_i$ be a full component created by this cut.  If there is a minor of $C_i$ isomorphic to $S_2$ such that this minor connects to the rest of the graph as in Figure \ref{wellconnected}, we say that this $S_2$ minor is \emph{well connected}. \end{definition}

\begin{figure}[h]
  \centering
    \includegraphics[scale=0.9]{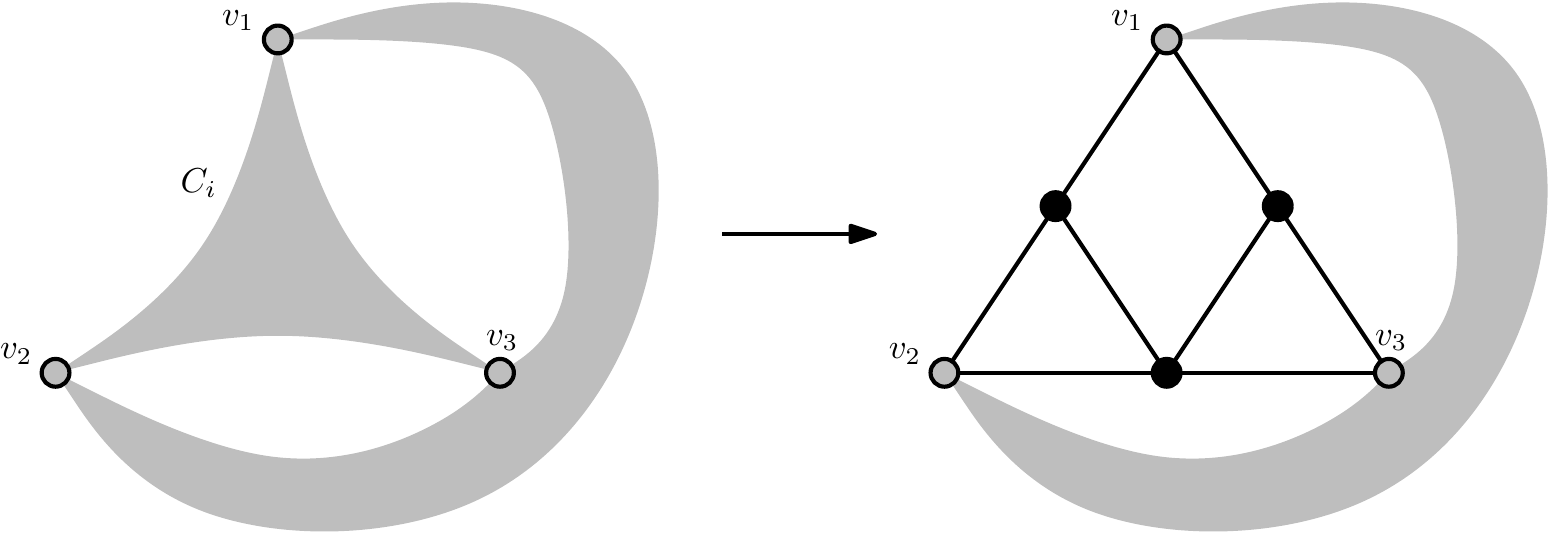}
  \caption{The appearance of a well connected $S_2$ minor.}
\label{wellconnected}
\end{figure}

Note that $S_2$ minors in full components are not necessarily well connected.

\begin{theorem} \label{s2creation} Let $G$ be a 3-connected planar $C$- and $H$-free graph with a three vertex cut $K = \{v_1,v_2,v_3\}$. Suppose this cut creates component $G_1$ such that each of $v_1$, $v_2$, and $v_3$ is adjacent to at least two vertices in $V(G_1)$. Then full component $C_1$ has a well connected $S_2$ minor and a two vertex cut that includes vertex $v_1$ and separates vertices $v_2$ and $v_3$.\end{theorem}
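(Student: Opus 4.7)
The plan is to establish the two-vertex cut first, and then use its structure together with the hypothesis on the neighborhoods to extract the well-connected $S_2$ minor.

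First, I would set up the combinatorial infrastructure. By the 3-connectivity of $G$ and Corollary \ref{mengerscor}, for any vertex $u \in V(G_1)$ there exist three internally disjoint paths from $u$ to $v_1$, $v_2$, $v_3$ inside $C_1$. Combined with the hypothesis that each $v_i$ has at least two neighbors in $V(G_1)$, this guarantees a rich supply of paths in $C_1$ connecting the three boundary vertices. In particular, for each pair $v_i, v_j$ I obtain at least two internally disjoint $v_i$--$v_j$ paths whose internal vertices lie in $V(G_1)$.

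Next, I would argue by contradiction that a two-vertex cut of $C_1$ containing $v_1$ and separating $v_2$ from $v_3$ must exist. Suppose instead that, after deleting $v_1$ from $C_1$, the vertices $v_2$ and $v_3$ remain 2-connected in the resulting graph. By Menger's Theorem there are then two internally disjoint $v_2$--$v_3$ paths in $C_1\setminus v_1$. Together with any $v_2$--$v_3$ path through $v_1$ (which exists because $v_1$ has at least two neighbors in $V(G_1)$), and the extra neighbors guaranteed at each of $v_2$ and $v_3$, one can branch off these paths to build three internally disjoint paths between two distinguished triples of vertices. After contracting each path to a single edge, this structure yields either a $K_{3,3}$-style subdivision leading to $H$ or a 3-cube-style subdivision leading to $C$ as a minor, contradicting the $H$- and $C$-freeness of $G$. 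The key technical step is to check that the ``extra neighbors'' at $v_2$ and $v_3$ together with the returning paths through $v_1$ always produce one of $H$ or $C$; this is where a small case analysis on how the extra branches attach to the two $v_2$--$v_3$ paths is needed.

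Let $\{v_1, u\}$ be the resulting two-vertex cut, so $C_1\setminus\{v_1,u\}$ splits into a part $A$ containing $v_2$ and a part $B$ containing $v_3$. Finally, to extract the well-connected $S_2$ minor, I would contract the $v_2$-side $A$ (together with $v_2$'s attachments through the two guaranteed neighbors) down onto the relevant boundary vertices, and similarly contract the $v_3$-side $B$, keeping $v_1$ and $u$ as distinguished internal vertices. The paths manufactured in the first step, combined with the extra-neighbor hypothesis at each $v_i$, give exactly enough edges after contraction to realize the labelling of $S_2$ shown in Figure~\ref{s2new}, with $v_1, v_2, v_3$ mapped to the boundary vertices of $S_2$ in the configuration of Figure~\ref{wellconnected}, making the minor well connected.

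The main obstacle will be the forced-minor step: carefully verifying that, under the combined hypotheses (3-connectivity of $G$, two neighbors of each $v_i$ in $V(G_1)$, and the failure of the desired two-vertex cut), one cannot avoid producing either an $H$ or a $C$ minor. The branching possibilities for the extra neighbors at $v_2$ and $v_3$ relative to the two disjoint $v_2$--$v_3$ paths in $C_1\setminus v_1$ need to be enumerated, and each configuration matched to one of $H$ or $C$; everything else in the proof is standard application of Menger's theorem and contraction arguments.
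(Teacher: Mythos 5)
Your overall architecture is inverted relative to the paper's, and the step you yourself flag as ``the main obstacle'' is a genuine gap, not a routine verification. You propose to assume that no cut $\{v_1,u\}$ separates $v_2$ from $v_3$ in $C_1$, extract two internally disjoint $v_2$--$v_3$ paths in $C_1 \setminus v_1$ plus a third through $v_1$, and then claim that attaching the ``extra neighbours'' of the $v_i$ forces an $H$ or $C$ minor. But three internally disjoint $v_2$--$v_3$ paths form a generalized theta graph, and adding a few branch paths to it does not in general produce $H$, $C$, or $K_{3,3}$ minors: such configurations can easily be planar, small, and minor-free. Nowhere in your argument do you use planarity, yet planarity is what makes the paper's proof work. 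The paper fixes a planar embedding in which $G_1$ sits inside a disc with $v_1,v_2,v_3$ on its boundary; the three faces meeting pairs of cut vertices yield three internally disjoint paths $P_1,P_2,P_3$ (their disjointness itself needs simplicity, 3-connectivity, and the two-neighbour hypothesis), and 3-connectivity forces branch vertices $w_i$ on each $P_i$ with escape paths $P^i$. It is then the planar embedding that forces these escape paths to cross or attach in constrained ways, and each surviving configuration is matched against $H$, $C$, or $K_{3,3}$; the two-vertex cut $\{v_1,w\}$ is \emph{constructed} from the resulting structure (via an extremal choice of $w$ on $P_1$), not assumed away by contradiction. Without the embedding you have no mechanism to force the intersections that generate the forbidden minors, so your contradiction cannot be completed as described.

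A second, smaller gap: the conclusion that the $S_2$ minor is \emph{well connected} concerns how the minor attaches to $V(G)-V(C_1)$ (Figure \ref{wellconnected}), and the paper secures this by invoking Corollary \ref{mengerscor} to obtain a vertex outside $C_1$ with three disjoint paths to $v_1,v_2,v_3$ avoiding $G_1$. Your proposal works entirely inside $C_1$ and simply asserts well-connectedness at the end; that external attachment must be argued. Likewise, ``contract the $v_2$-side $A$ down onto the relevant boundary vertices'' is not enough to realize the specific graph $S_2$ of Figure \ref{s2new}: you need the internal path structure ($P_1$ with the distinguished vertex $w$, the attachments of $P^2$ and $P^3$ into $P_1$, and $v_1$'s connections to both sides) that the paper's embedding argument supplies.
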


\begin{proof} As $G$ is three-vertex connected, there is a planar embedding such that, imagining a disc such that only vertices $v_1$, $v_2$, and $v_3$ lie on the edge of this disc, all vertices in $V(G_1)$ are contained inside this disc, while all vertices in $V(G) - V(C_1)$ are outside of the disc. By construction, there will be three faces in this embedding that contain a pair of the cut set vertices $v_i$ and $v_j$ ($i \neq j$) and vertices in $V(G_1)$. Then, these faces define three internally disjoint paths $P_1$, $P_2$, and $P_3$, pairwise between $v_1$, $v_2$, and $v_3$, as in Figure \ref{inside}. To prove that these paths are internally disjoint, suppose towards a contradiction that any pair of paths have a vertex $v \in V(G)$ in common. Let $v_i$ be an end vertex for both of these paths. By construction, then, vertices $v$ and $v_i$ are on two common faces. Since we are assuming that $G$ is simple, it follows that $v_i$ and $v$ are either connected by an edge, which contradicts the fact that vertex $v_i$ is assumed to be adjacent to at least two vertices in $V(G_1)$, or $v_i$ and $v$ form a two vertex cut, which contradicts 3-connectivity. 

\begin{figure}[h]
  \centering
    \includegraphics[scale=1.0]{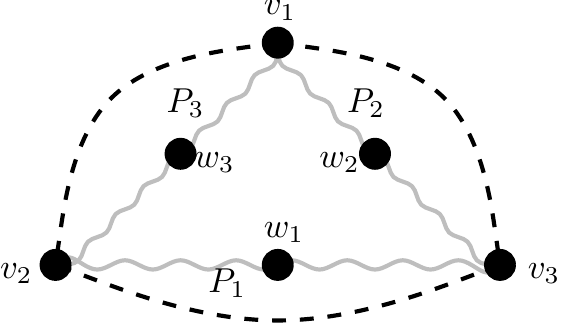}
  \caption{Inside the full component.}
  \label{inside}
\end{figure}

By 3-connectivity, for each path $P_{i}$, there must be a vertex $w_i \in V(P_{i}) \cap V(G_1)$ such that $w_i$ is adjacent to a vertex not in $V(P_{i})$, as otherwise the end vertices of path $P_i$ form a two vertex cut. As such, for each $w_i$, there must be a path in $C_1$ to a vertex in $V(P_j)-V(P_i)$, $i \neq j$. Let $P^1$, $P^2$, and $P^3$ be fixed paths from vertices $w_1$, $w_2$, and $w_3$, respectively, that meet these requirements. To keep these paths as simple as possible, we may assume without loss of generality that $V(P^i) \cap V(P_i) = \{w_i\}$ for each path, and further that $|V(P^i) \cap (V(P_1) \cup V(P_2) \cup V(P_3))| = 2$.  Since $C_1$ is a full component created by a three vertex cut, there is at least one vertex in $V(G) - V(C_1)$ with vertex disjoint paths to each of $v_1$, $v_2$, and $v_3$ in $V(G) - V(G_1)$ by Corollary \ref{mengerscor}. Note that we are trying to avoid minors as shown in Figure \ref{cubek33}. To avoid a minor isomorphic to $H$, one of the paths $P_i$ must intersect all paths $P^j$, so $V(P^j) \cap V(P_i) \neq \emptyset$ for $1 \leq j \leq 3$. Without loss of generality, suppose this is path $P_1$. 

\begin{figure}[h]
  \centering
    \includegraphics[scale=0.75]{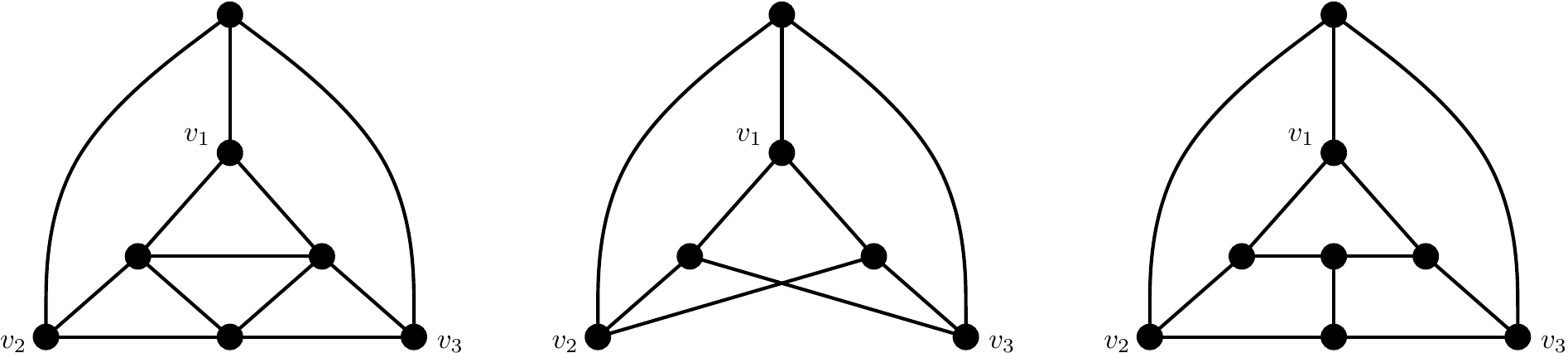}
  \caption{Possible $H$, $K_{3,3}$, and $C$ minors.}
  \label{cubek33}
\end{figure}

Suppose first that path $P^j$ has end vertices $w_j$ and $v_j$, $j \in \{2,3\}$. By symmetry suppose it is path $P^2$. Drawn as in Figure \ref{insideset1}, path $P^3$ must share an internal vertex with $P^2$ to maintain planarity, and have end vertices $v_3$ and $w_3$ to avoid a $C$ minor. As this is a planar embedding, path $P^1$ must therefore contain a vertex in $(V(P^2) \cup V(P^3)) - (V(P_1) \cup V(P_2) \cup V(P_3))$. This creates a $C$ minor.

\begin{figure}[h]
  \centering
    \includegraphics[scale=1.0]{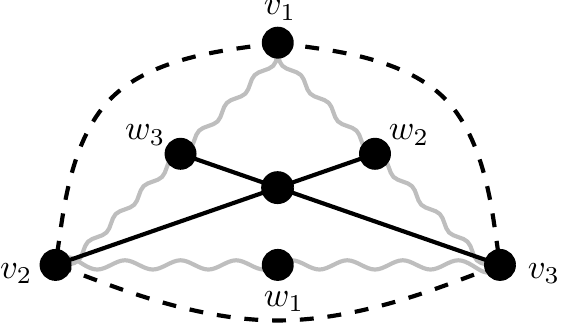}
  \caption{A crossing created by $v_2-w_2$ and $v_3-w_3$ paths.}
  \label{insideset1}
\end{figure}

Whence, we may assume that end vertices for paths $P^2$ and $P^3$ are in $V(G_1) \cap V(P_1)$. To avoid an $H$ minor, there may be no paths from $V(P_2) \cap V(G_1)$ to $V(P_3) \cap V(G_1)$ that do not go through either $P_1$ or $v_1$. Let vertex $w$ be the vertex furthest from $v_3$ on path $P_1$ such that there is no path in $V(G_1)$ from a vertex in $(V(P_3) \cap V(G_1)) - V(P_1)$ to a vertex closer to $v_3$ on $P_1$. By construction, there is at least one such path, and hence the choice of $w$ is well-defined. I claim vertices $v_1$ and $w$ form a two vertex cut in $C_1$.

\begin{figure}[h]
  \centering
    \includegraphics[scale=1.0]{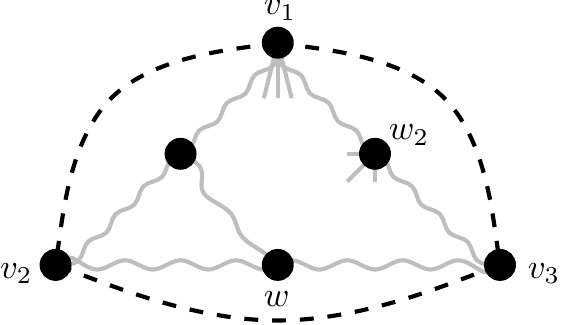}
  \caption{The appearance of vertex $w$ in $C_1$.}
  \label{insideset2}
\end{figure}

By construction, $C_1$ must appear as in Figure \ref{insideset2}. Consider a path from any vertex in $V(P_2) \cap V(G_1)$ to a vertex in $V(P_1) \cap V(G_1)$. If this path is to a vertex further from $v_3$ on $P_1$ than $w$, we create either a $K_{3,3}$ or $C$ minor. As such, all such paths must be either to $w$ or to vertices closer to $v_3$ than $w$ on $P_1$. Thus, vertices $v_1$ and $w$ form a two vertex cut in $C$, and $C$ contains a well connected $S_2$ minor. \end{proof}

\begin{theorem} \label{canthave3} Consider a planar 3-connected $C$- and $H$-free graph $G$. Fix a non-splitting 5-configuration $S \subseteq E(G)$. If a three vertex cut $K = \{v_1,v_2,v_3\}$ produces a full component $C_1$ that contains no well connected $S_2$ minor, then $|S \cap E(C_1)| \neq 3$.  \end{theorem}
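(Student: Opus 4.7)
The plan is to argue by contradiction: assume $E(C_1) \cap S = \{e_1, e_2, e_3\}$ with $\{e_4, e_5\} \subset E(G) \setminus E(C_1)$, and let $G_1$ denote the interior component of $G - K$ contained in $C_1$. I will exhibit a Dodgson $D \in D_S$ for which $T_D = \emptyset$, so that $D = 0$ and $S$ splits, contradicting the non-splittingness of $S$. The first step is to extract the structural information encoded by the hypothesis: by the contrapositive of Theorem \ref{s2creation}, at least one cut vertex, which I may relabel as $v_1$, is adjacent to at most one vertex of $V(G_1)$. Three-connectivity of $G$ forbids zero neighbors (else $\{v_2, v_3\}$ already separates $V(G_1)$ from the rest of $G$), so $v_1$ has a unique neighbor $w_1 \in V(G_1)$ joined by a single edge $f_1 = \{v_1, w_1\}$; this pendant is the bottleneck the Dodgson argument will exploit.

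I would then test the Dodgson $D = \Psi^{e_1 e_2 e_3,\, e_3 e_4 e_5}$, whose minors are $\mone = G \cut \{e_1, e_2, e_3\} \contract \{e_4, e_5\}$ and $\mtwo = G \cut \{e_3, e_4, e_5\} \contract \{e_1, e_2\}$. For any candidate common spanning tree $T$, the restriction $T \cap E(C_1)$ is drawn from the same edge set $E(C_1) \setminus \{e_1, e_2, e_3\}$ in both minors. Counting vertices inside $C_1$, namely $|V(C_1)|$ on the $\mone$ side against $|V(C_1)| - 2$ on the $\mtwo$ side after the two interior contractions, forces $T \cap E(C_1)$ to form exactly three components in $\mone$ (one per cut vertex) and exactly one component in $\mtwo$. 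In the principal sub-case where $f_1 = e_3$ lies in $S$ and neither $e_1$ nor $e_2$ is an edge $\{v_1, v_2\}$ or $\{v_1, v_3\}$, the bottleneck $f_1$ is deleted in both minors. The edges $\{v_1, v_2\}$ and $\{v_1, v_3\}$, if present in $E(C_1)$, cannot lie in $T$ without gluing cut-vertex components in $\mone$ and breaking the requirement of three components, so they are also absent from $T$. Hence $v_1$ is isolated inside $C_1$ in both minors, and in particular in $\mtwo$, forcing at least two components there and contradicting the count of one. So $T_D = \emptyset$ and $D = 0$.

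The residual sub-cases are (i) $f_1 \notin S$, so the bottleneck edge is preserved in both minors, and (ii) one of the edges $\{v_1, v_2\}, \{v_1, v_3\}$ lies in $\{e_1, e_2\}$, so the contraction in $\mtwo$ re-attaches $v_1$ to a cut vertex. My plan for (i) is to pass to the refined $3$-cut $K' = \{w_1, v_2, v_3\}$, whose inside full component $C_1' = C_1 - v_1$ is strictly smaller than $C_1$ and still carries all three $S$-edges, and induct on $|V(G_1)|$; a short lifting lemma re-attaching $v_1$ along the pendant $f_1$ shows that a well-connected $S_2$ minor in $C_1'$ pulls back to one in $C_1$, so the no-well-connected-$S_2$-minor hypothesis is inherited. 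For (ii) I would reselect the Dodgson, swapping the roles of the offending edge between $I$, $J$, and $K$ so that it is deleted rather than contracted in the critical minor, and rerun the same vertex/edge counting on $C_1$ to recover the same contradiction.

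The main obstacle is closing the induction in case (i) and handling the sub-cases in case (ii). In particular, verifying the lifting lemma for well-connected $S_2$ minors along pendant attachments, and finding a uniform Dodgson that works for every configuration of the edges $\{v_1, v_2\}$ and $\{v_1, v_3\}$ relative to $S$, are the most delicate points. It is plausible that a single uniform Dodgson does not suffice and that a case-by-case selection keyed to the structure of $E(C_1) \cap S$ around $v_1$ is required.
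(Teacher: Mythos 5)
Your opening move matches the paper exactly (the contrapositive of Theorem \ref{s2creation} produces a cut vertex $v_1$ with a unique neighbour $w_1$ in $G_1$), and your ``principal sub-case'' is correct: it is in fact a hand re-derivation of machinery the paper already has, since deleting $f_1=\{v_1,w_1\}$ leaves $\{v_2,v_3\}$ as a two vertex cut separating the remaining two $S$-edges of $C_1$ from $e_4,e_5$, which is precisely the deletion half of Corollary \ref{trickythreecut} (via Proposition \ref{prop1}). But the cases you defer are exactly where the substance lies, and the missing idea is the \emph{contraction} move: when an edge of $S$ joins two of the cut vertices (say $\{v_1,v_2\}\in S$, your case (ii)), contracting that edge identifies $v_1$ with $v_2$ and collapses the three vertex cut to the two vertex cut $\{v_1{=}v_2,\,v_3\}$, after which Corollary \ref{trickythreecut} applies directly. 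Your proposed fix --- moving the offending edge into the slot that is deleted rather than contracted --- goes the wrong way: with $\{v_1,v_2\}$ deleted in both minors and $f_1$ contracted in $\mtwo$, the vertex $v_1$ is reattached to $G_1$ in the minor where you need it isolated, and the component count closes with no contradiction. The paper's case analysis is keyed to the four local edges $\{v_1,v_2\}$, $\{v_1,v_3\}$, $\{v_2,v_3\}$, $\{v_1,w_1\}$: if any is in $S$, one deletion or contraction of a single $S$-edge manufactures the required two vertex cut.

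Your case (i) is also not closed. If $f_1\notin S$ but some $S$-edge is incident to $v_1$, that edge must be $\{v_1,v_2\}$ or $\{v_1,v_3\}$ and you are back in the unresolved case (ii); and when no $S$-edge is incident to $v_1$, your induction on the smaller cut $\{w_1,v_2,v_3\}$ must terminate somewhere. The paper iterates inward until the full component is a single internal vertex $w$ joined to the three cut vertices, where either an $S$-edge between two cut vertices can be contracted as above, or all three edges at $w$ lie in $S$ and Proposition \ref{cycle} forces a split. Without the contraction move you have no way to finish this terminal configuration either. So the proposal is a correct start with one correctly verified sub-case, but the argument that actually covers all placements of $S\cap E(C_1)$ around $v_1$ --- the heart of the theorem --- is missing.
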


\begin{proof} Suppose towards a contradiction there are three edges in $S \cap E(C_1)$. By Theorem \ref{s2creation}, one of the vertices in $K$ must be incident with at most one internal vertex of $C_1$. Without loss of generality, suppose this is vertex $v_1$. Consider the graph $G$, labelled as in Figure \ref{s2free}.

\begin{figure}[h]
  \centering
    \includegraphics[scale=1.0]{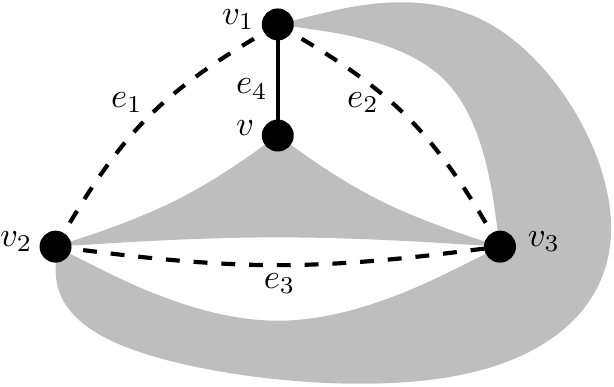}
  \caption{An $S_2$-free full component.}
  \label{s2free}
\end{figure}

From Corollary \ref{trickythreecut}, it is sufficient to prove that one of the edges in the 5-configuration $S$ in $C_1$, when deleted or contracted, creates a two vertex cut that separates the remaining edges of the 5-configuration into pairs in each full component. As such, if at least one of of $e_1$, $e_2$, or $e_3$ is in $S$, contraction of this edge produces a two vertex cut that separates the other two edges. If none of $e_1$, $e_2$, or $e_3$ is in $S$ and $e_4$ is, then deleting $e_4$ produces a two vertex cut that separates the remaining edges in pairs.

If none of $e_1$, $e_2$, $e_3$, or $e_4$ are in $S$, then take the three vertex cut $\{v,v_2,v_3\}$. Since $C_1$ had no well connected $S_2$ minor, this smaller three vertex cut contains no well connected $S_2$ minor and so by Theorem \ref{s2creation} we may continue this process. It becomes impossible to iterate further when there is only a single vertex remaining inside the three vertex cut, as in Figure \ref{s2freelaststep}. If an edge not incident with $w$ is in $S$, contracting this edge produces a two vertex cut separating the remaining edges. If none of these edges are in $S$,  all three edges incident to vertex $w$ must be in $S$, and this 5-configuration will split by Proposition \ref{cycle}. In any case, a 5-configuration with three edges in $E(C_1)$ splits in $G$.

\begin{figure}[h]
  \centering
    \includegraphics[scale=1.0]{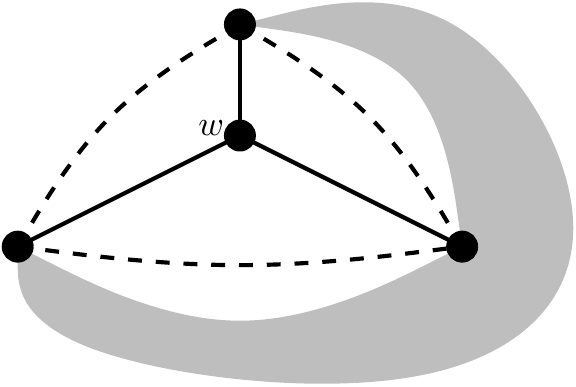}
  \caption{A trivial $S_2$-free full component.}
  \label{s2freelaststep}
\end{figure}

 \end{proof}

\begin{corollary} \label{atmost2} With a graph $G$ as in Theorem \ref{canthave3}, a full component with no well connected $S_2$ minor may not contain more than two edges in a non-splitting 5-configuration $S$. \end{corollary}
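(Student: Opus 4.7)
The plan is to reduce to Theorem~\ref{canthave3}, which already handles $|S \cap E(C_1)| = 3$; it remains to rule out $|S \cap E(C_1)| \in \{4,5\}$. I would run essentially the same iterative argument as in the proof of Theorem~\ref{canthave3}, but with more careful bookkeeping on where the edges of $S$ sit. Since $C_1$ has no well connected $S_2$ minor, Theorem~\ref{s2creation} still forces at least one of the cut vertices, say $v_1$, to be adjacent to at most one internal vertex $v$ of $C_1$, giving the local picture of Figure~\ref{s2free} around $v_1$.

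Let $k = |S \cap E(C_1)| \in \{4,5\}$, so $5-k \in \{0,1\}$ edges of $S$ lie outside $E(C_1)$. I examine which of the four local edges $e_1,e_2,e_3,e_4$ (from Figure~\ref{s2free}) belong to $S$. Whenever one of $e_1,e_2,e_3$ lies in $S$, contracting it merges two of the three cut vertices and produces a two vertex cut of $G$; I pick the contraction so that the resulting full component on the $v$--side contains exactly two of the remaining four edges of $S$, and invoke Corollary~\ref{trickythreecut}. If only $e_4$ among the four local edges lies in $S$, deleting $e_4$ reduces the three vertex cut to a two vertex cut; again I verify the $2$--$2$ split and apply Corollary~\ref{trickythreecut}. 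If none of $e_1,e_2,e_3,e_4$ lies in $S$, I iterate by taking the smaller three vertex cut $\{v,v_2,v_3\}$, whose full component on the $v$--side is still $S_2$--free and still contains all $k$ edges of $S$.

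The iteration terminates at the trivial configuration of Figure~\ref{s2freelaststep}. There, either some edge of $S$ not incident with the interior vertex $w$ can be contracted to produce a two vertex cut giving a $2$--$2$ split of the remaining edges (handled by Corollary~\ref{trickythreecut}), or every edge incident with $w$ lies in $S$, in which case $S$ contains all edges at a degree-$3$ vertex and splits by Proposition~\ref{cycle}.

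The main obstacle is the bookkeeping for the case $k = 5$: there is no edge of $S$ outside $C_1$ to balance the cut, so whichever edge of $S$ I delete or contract must itself be chosen so that the resulting two vertex cut splits the remaining four edges of $S$ as $2$--$2$ entirely within the old $C_1$, rather than as a $3$--$1$ split. Verifying this works uniformly in every sub-case---using the restricted local geometry forced by the absence of a well connected $S_2$ minor---is the real content of the corollary beyond Theorem~\ref{canthave3}.
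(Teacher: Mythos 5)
Your overall strategy matches the paper's in its ingredients: use Theorem \ref{s2creation} to get the local structure at a cut vertex adjacent to a single internal vertex, find an edge of $S$ whose deletion or contraction creates a two vertex cut splitting the remaining edges of $S$ two--two, and close with Corollary \ref{trickythreecut} (or Proposition \ref{cycle} in the degenerate case). But the step you yourself flag as ``the real content'' --- that such an edge with a guaranteed two--two split always exists, especially when all five edges of $S$ lie in $C_1$ --- is precisely what your write-up does not establish, and your iterative scheme does not by itself supply it. If, say, only $e_1$ among the local boundary edges lies in $S$ and the other four edges of $S$ sit deep inside the component, then removing $e_1$ yields a four--zero split across the resulting two vertex cut, and Corollary \ref{trickythreecut} gives you nothing. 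Your descent step only fires when \emph{none} of the local edges are in $S$, so it cannot rescue this situation.

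The paper closes this gap with an extremal choice rather than an iteration: it takes the \emph{smallest} three vertex cut inside the full component whose full component $C_j$ still contains at least three edges of $S$. Minimality forces every strictly smaller nested three vertex cut to contain at most two edges of $S$, so at least $|S\cap E(C_j)|-2$ of the three boundary edges $e_1,e_2,e_3$ of $C_j$ must lie in $S$ (all three if $C_j$ holds five edges, at least two if it holds four). With the interior thus capped at two edges of $S$ and the boundary edges pinned down, the two--two split after deleting $e_1$ or contracting $e_2$ can be verified in a short, exhaustive case check. You should replace your open-ended bookkeeping with this minimality argument; without it, the claim that you can always ``pick the contraction so that the resulting full component contains exactly two of the remaining four edges'' is an assertion, not a proof.
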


\begin{proof} From Theorem \ref{canthave3}, a full component with no well connected $S_2$ minor may not contain three edges of a non-splitting 5-configuration. Suppose then that this full component contains at least four edges of a non-splitting 5-configuration. Take the smallest three vertex cut in this full component such that any smaller three vertex cut properly contained in this full component contains fewer than three edges in $S$,  $C_j$. If $C_j$ contains three edges in the 5-configuration, then $S$ cannot split by Theorem \ref{canthave3}. 

If $C_j$ contains four or five edges of $S$, it must be that, labelled as in Figure \ref{fourorfive}, some set of edges $e_1$, $e_2$, or $e_3$ are in $S$. Specifically, if five edges of $S$ are $C_j$, then all of $e_1,e_2,e_3 \in S$. If four edges of $S$ are in $C_j$, then either two or three of these edges are in $S$. 

\begin{figure}[h]
  \centering
    \includegraphics[scale=1.0]{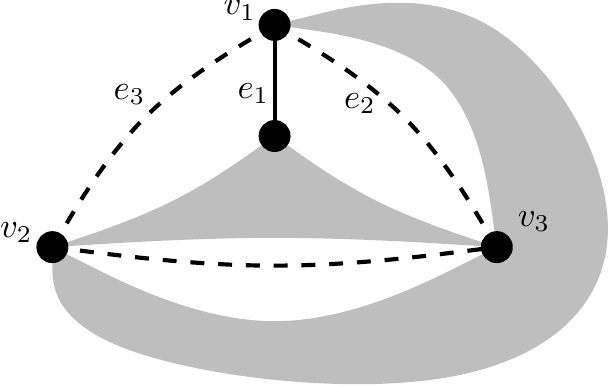}
  \caption{Potential edges in $S$.}
  \label{fourorfive}
\end{figure}

If $e_1 \in S$, then deleting edge $e_1$ produces a two vertex cut at vertices $v_1$ and $v_2$. If this two vertex cut separates the remaining edges of $S$ into sets of two in the full components, then $S$ will split by Corollary \ref{trickythreecut}. This holds if $C_j$ contains five edges of $S$, or if $C_j$ contains four edges of $S$, and one of $e_2$ or $e_3$ is not in $S$. 

If $e_1 \notin S$, it must be the case that $e_2, e_3 \in S$ and precisely four of the edges in $S$ are in this full component. Then, contracting edge $e_2$ edges produces a two vertex cut such that $e_3$ is incident with both vertices of this cut and there is at least one other edge in both full components. One of the full components induced by this two vertex cut must therefore contain precisely two edges in $S$, and by Corollary \ref{trickythreecut}, $S$ does not split in this case.

Suppose now that $C_j$ contains four edges of $S$ and $e_1,e_2,e_3 \in S$. Again, we may contract edge $e_2$, and this produces a two vertex cut. Then, edges $e_1$, $e_3$, and the forth edge in $S \cap E(C_j)$ are in a full component induced by this two vertex cut. Let $f$ be the edge in $S$ but not in $E(C_j)$. Then, a full component induced by this two vertex cut contains precisely edges $e_3, f \in S$, and by Corollary \ref{trickythreecut} $S$ does not split.

In any case, it is not possible that $S$ is a non-splitting 5-configuration. \end{proof}

\begin{corollary} \label{onlys1} Suppose $G$ is 3-connected, planar, and $H$-, $O$-, and $C$-free graph. If $G$ has a three vertex cut such that neither full component has a well connected $S_2$ minor, then $G$ must split. \end{corollary}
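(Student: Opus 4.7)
The plan is to argue by contradiction, combining Corollary \ref{atmost2} with a short pigeonhole count. Suppose for contradiction that $G$ has a non-splitting $5$-configuration $S$, and let $\{v_1, v_2, v_3\}$ be the three vertex cut promised by the hypothesis, producing full components $C_1$ and $C_2$, neither of which has a well connected $S_2$ minor.

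First, since $G$ is planar, $3$-connected, and $H$-, $O$-, $C$-free, and each $C_i$ has no well connected $S_2$ minor, Corollary \ref{atmost2} immediately gives $|S \cap E(C_i)| \leq 2$ for both $i = 1, 2$. The next step is to account carefully for how the edges of $S$ are distributed across the two full components, keeping in mind that an edge between two cut vertices lies simultaneously in $E(C_1)$ and $E(C_2)$, while every other edge lies in exactly one full component. Accordingly I would partition $S$ as $S_0 \sqcup S_1 \sqcup S_2$, where $S_0 = S \cap E(C_1) \cap E(C_2)$ is the set of edges of $S$ joining two cut vertices, and $S_i = (S \cap E(C_i)) \setminus S_0$ for $i = 1, 2$. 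Then $|S_0| + |S_1| + |S_2| = 5$ and $|S \cap E(C_i)| = |S_0| + |S_i|$.

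Applying the bound from Corollary \ref{atmost2} to each full component and summing gives
\[
2|S_0| + |S_1| + |S_2| \;=\; |S \cap E(C_1)| + |S \cap E(C_2)| \;\leq\; 4.
\]
Subtracting $|S_0| + |S_1| + |S_2| = 5$ yields $|S_0| \leq -1$, which is impossible. Hence no non-splitting $5$-configuration can exist, so $G$ splits.

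The main obstacle in this corollary is not the counting itself, which is essentially a pigeonhole; the substantive work was done in Theorem \ref{canthave3} and Corollary \ref{atmost2}, which required careful analysis of how deleting or contracting an edge in an $S_2$-free full component yields a two vertex cut that separates the remaining edges of $S$ into pairs, letting Corollary \ref{trickythreecut} finish the job. Once that bound is in hand, the present corollary follows by exhausting the arithmetic possibilities for distributing five edges between two buckets of capacity two (with edges between cut vertices contributing to both buckets).
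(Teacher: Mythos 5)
Your proof is correct and follows essentially the same route as the paper: apply Corollary \ref{atmost2} to conclude each full component contains at most two edges of a putative non-splitting $5$-configuration, then derive a contradiction by a pigeonhole count on the five edges. Your extra bookkeeping for edges joining two cut vertices (which lie in both full components) only strengthens the contradiction and is a reasonable refinement of the paper's one-line count, but it is not a different argument.
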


\begin{proof} Consider a three vertex cut in $G$ as above. By assumption, both full components have no well connected $S_2$ minor. It follows from Corollary \ref{atmost2} that both full components may contain at most two edges of a non-splitting 5-configuration. Hence, any set of five edges in $E(G)$ will force more than two edges in at least one full component, and as such will split. \end{proof}

\begin{lem} \label{fan} Let $G$ be a 3-connected $H$-free graph such that there is a three vertex cut that produces a full component $C_1$ isomorphic to $S_2$, and this $S_2$ is well connected. If there is a cycle $L$ in $G$ such that $E(C_1) \cap E(L) = \emptyset$ and $v_1 \notin V(L)$ then $G$ has an $H$ minor. As a result, the vertices external to $C_1$ must appear as in Figure \ref{fangraph}.  \end{lem}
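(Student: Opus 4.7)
My approach is contrapositive in spirit: assuming the cycle $L$ exists, I would exhibit an $H$ minor in $G$ directly. The intuition is that the well connected $S_2$ inside $C_1$, as displayed in Figure \ref{wellconnected}, supplies one ``hemisphere'' of the target minor, attached to the rest of $G$ through the three cut vertices $v_1, v_2, v_3$; the external cycle $L$, together with paths from $v_1, v_2, v_3$ into $L$, will supply the complementary hemisphere.

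First I would contract $C_1$ down to a clean copy of the well connected $S_2$. This is harmless because any deletions and contractions performed entirely inside $C_1$ preserve both $L$ and the connectivity between $\{v_1, v_2, v_3\}$ and the rest of $G$. Next, 3-connectivity of $G$ together with Menger's theorem (Corollary \ref{mengerscor}) yields three internally disjoint paths from $v_1$ to three distinct vertices of $L$, all lying in $G - (V(C_1) - \{v_1\})$, and similarly vertex-disjoint paths from $v_2$ and from $v_3$ into $L$ within $G - (V(C_1) - \{v_1, v_2, v_3\})$. Contracting these spokes and contracting the arcs of $L$ between consecutive attachment points down to single edges, and then gluing the resulting external piece onto the internal $S_2$ at $v_1, v_2, v_3$, produces a minor whose edge list matches that of $H$.

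The main obstacle is the degenerate configurations in which two attachment points on $L$ are forced to coincide, or in which $v_2$ or $v_3$ itself already lies on $L$; in these cases the contracted external piece is thinner than the generic one and a direct $H$ minor may not present itself. Here I would lean on the planar embedding of $G$ (available because $K_5$ and $K_{3,3}$ are already known forbidden minors) to constrain the cyclic order of the attachment points around $L$, and to rule out those arrangements that would produce a $K_{3,3}$ or cube minor in place of $H$. Once the case analysis is complete, the geometric claim about Figure \ref{fangraph} is immediate: if no such $L$ exists, then the external graph $G - (V(C_1) - \{v_2, v_3\})$ with $v_1$ removed is acyclic, and the 3-connectivity of $G$ then forces this acyclic graph to be a path whose interior vertices are all adjacent to $v_1$, which is exactly the fan configuration depicted.
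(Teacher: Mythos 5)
Your overall plan is the same as the paper's: combine the well connected $S_2$ inside $C_1$ with the external cycle $L$ and three disjoint connections at $v_1,v_2,v_3$ to exhibit an $H$ minor, then deduce the fan structure from the fact that every exterior cycle must contain $v_1$. However, there is a concrete gap in how you obtain the three spokes. Applying Menger separately at $v_1$, then at $v_2$, then at $v_3$ gives you path systems that need not be pairwise vertex-disjoint from one another, and three internally disjoint paths from $v_1$ alone to $L$ is not the object you need; what the minor requires is one path from each of $v_1,v_2,v_3$ to $L$, pairwise disjoint and avoiding the interior of $C_1$. The paper secures exactly this with a single application of Corollary \ref{mengerscor}: pick an interior vertex $v$ of $C_1$ and three vertices $w_1,w_2,w_3$ of $L$, and take three paths from $v$ to the $w_i$ meeting only at $v$. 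Since $\{v_1,v_2,v_3\}$ separates $v$ from $L$ (no $w_i$ can be interior to $C_1$, as all edges at an interior vertex lie in $E(C_1)$ while $L$ has none), each path contains exactly one cut vertex, and the segments beyond the cut vertices are automatically the disjoint spokes you want. You should either adopt this device or apply the set-to-set form of Menger between $\{v_1,v_2,v_3\}$ and $V(L)$ and justify that no $2$-separator can intervene.

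A second issue: your treatment of the degenerate cases invokes a planar embedding of $G$, but planarity is not among the hypotheses of the lemma ($G$ is only assumed $3$-connected and $H$-free), so that appeal is not licensed as stated. The paper handles the cases where $v_2$ and/or $v_3$ lie on $L$ by the same fan-from-$v$ argument without planarity. Your final deduction of the fan picture is the right idea (the exterior minus $v_1$ is acyclic), but ``$3$-connectivity forces a path with interior vertices adjacent to $v_1$'' needs the intermediate step the paper supplies: every exterior vertex has three disjoint paths to $v_1,v_2,v_3$ outside $C_1$, any other exterior vertex must lie on these paths (else a $K_{3,3}$ minor or a forbidden exterior cycle appears), and nontrivial branches toward $v_1$ would create a cycle avoiding $v_1$.
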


\begin{figure}[h]
  \centering
    \includegraphics[scale=1.0]{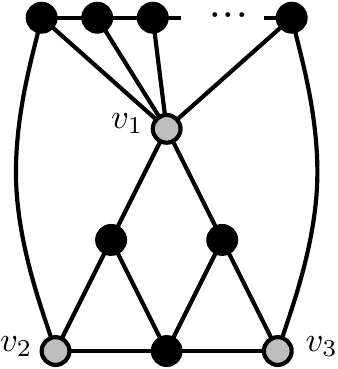}
  \caption{A graph containing a well connected $S_2$ subgraph.}
\label{fangraph}
\end{figure}

Note that the number of vertices external to the $S_2$ full component is simply required to be at least one.

\begin{proof} Suppose there is a cycle $L$ in $G$ that shares no edges with $E(C_1)$. Suppose vertices $v_1$, $v_2$, and $v_3$ are not in $V(L)$. Taking vertices $w_1,w_2,w_3 \in V(L)$ and any vertex $v \in V(C_1) -\{v_1,v_2,v_3\}$, by Corollary \ref{mengerscor}, there must be internally vertex disjoint paths from $v$ to each of $w_1$, $w_2$, and $w_3$. By construction, each of $v_1$, $v_2$, and $v_3$ must be contained in precisely one of these paths, and each path must contain precisely one of these vertices. Up to relabeling, this will create an $H$ minor as in Figure \ref{htrisub}. Similarly, if the cycle includes $v_2$, $v_3$, or both $v_2$ and $v_3$, but not $v_1$, the graph will an $H$ minor. As such, any cycle that contains no edges in $E(C_1)$ must contain vertex $v_1$.

\begin{figure}[h]
  \centering
    \includegraphics[scale=1.0]{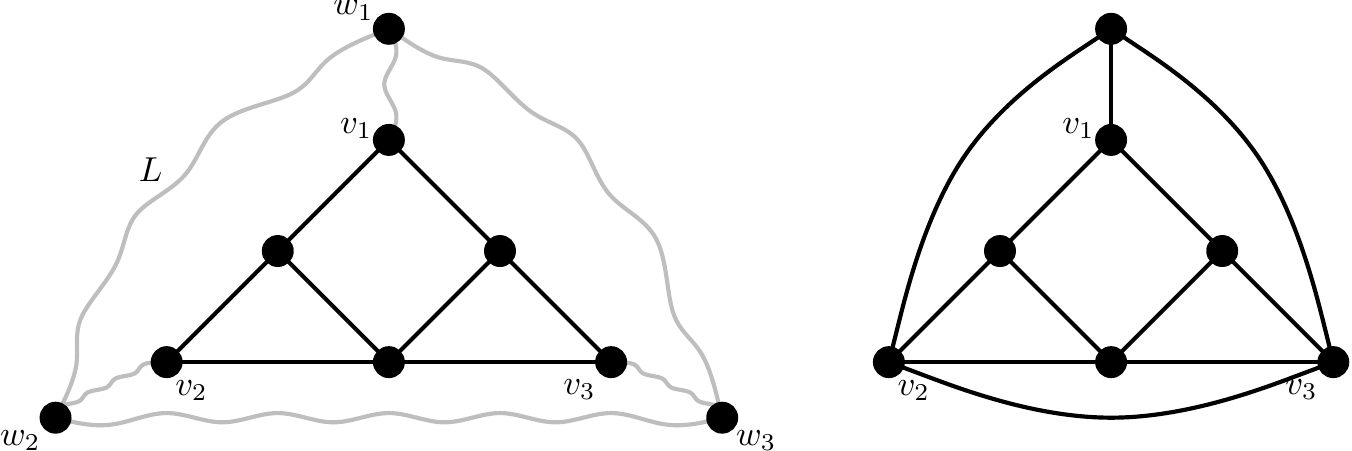}
  \caption{The cycle $L$ in $G$ and the $H$ minor it creates.}
\label{htrisub}
\end{figure} 

Now suppose $v$ is a vertex in $V(G) - V(C_1)$. By definition of a vertex cut set for a non-complete graph, such a vertex must exist. Again by Menger's Theorem, there must be vertex disjoint paths from $v$ to each of $v_1$, $v_2$, and $v_3$ outside of $C_1$. If there is a vertex $v'$, $v' \neq v$, external to $C_1$, then $v'$ must lie on one of these paths, as otherwise $G$ would have a $K_{3,3}$ minor or a cycle that shares no edges with $C_1$ and does not contain $v_1$. Hence we may assume all vertices in $V(G) - V(C_1)$ lie on these paths. By the same reasoning, $v$ must lie on a path from $v'$ to one of the cut vertices. Since we may not create a cycle external to $C_1$ that does not contain $v_1$, it is not possible that $v$ lies on the path from $v'$ to $v_1$, and similarly that $v'$ lies on the path from $v$ to $v_1$. That is, if such a path is non-trivial, 3-connectivity forces the vertex incident with $v_1$ on this path to be adjacent to at least one vertex in addition to its neighbours on the path. This will create a cycle that does not include $v_1$. Hence, the external vertices of $g$ must appear as in Figure \ref{fangraph}. \end{proof}

\begin{corollary} \label{fanminor} Let $G$ be a 3-connected graph with a three vertex cut $\{v_1,v_2,v_3\}$ that induces a full component $C_1$. If $C_1$ has a well connected $S_2$ minor, the vertices in $V(G) - V(C_1)$ must appear as in Figure \ref{fangraph}. Similarly, with $C_1$ drawn as in Figure \ref{wellconnected}, every cycle that contains no edges in $E(C_1)$ must contain vertex $v_1$. \end{corollary}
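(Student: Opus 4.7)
The plan is to reduce to Lemma \ref{fan} by passing to a suitable minor of $G$. Since $C_1$ has a well connected $S_2$ minor, there is a sequence of deletions and contractions, supported entirely on $E(C_1)$, that converts $C_1$ into an $S_2$ still well connected to $\{v_1,v_2,v_3\}$. Let $G'$ be the resulting minor of $G$. Everything outside $C_1$ is unchanged, so $V(G) \setminus V(C_1) = V(G') \setminus V(S_2)$ as vertex sets, the edges incident only to external vertices or attaching them to $\{v_1,v_2,v_3\}$ agree in $G$ and $G'$, and the edge set of the new $S_2$ component of $G'$ is contained in $E(C_1)$.

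For the cycle assertion I would argue by contradiction. Suppose $L$ is a cycle in $G$ with $E(L) \cap E(C_1) = \emptyset$ and $v_1 \notin V(L)$. Because the only edges modified in forming $G'$ lie in $E(C_1)$, the cycle $L$ persists in $G'$ and still avoids the edges of the $S_2$ component there. Applying Lemma \ref{fan} to $G'$ then yields an $H$ minor in $G'$, hence an $H$ minor in $G$, contradicting the $H$-freeness hypothesis carried through the chapter. The fan structure for $V(G) \setminus V(C_1)$ then follows directly from Lemma \ref{fan} applied to $G'$, since the external vertex set and external edges of $G'$ coincide with those of $G$, and Lemma \ref{fan} places them into the configuration of Figure \ref{fangraph}.

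The remaining task is to check that $G'$ actually satisfies the hypotheses of Lemma \ref{fan}, namely $H$-freeness and 3-connectivity. $H$-freeness transfers from $G$ because $G'$ is a minor of $G$. For 3-connectivity, any putative 2-cut $\{a,b\}$ of $G'$ falls into one of three cases: contained in the new $S_2$ component, contained in the external part, or split between them. The first is ruled out by the internal connectivity of $S_2$ together with the cut vertices $v_1,v_2,v_3$; the second is ruled out because the external part of $G'$ is identical to that of $G$, which is 3-connected; the third is exactly what the term \emph{well connected} is designed to prevent, since by Figure \ref{wellconnected} the attachments at $v_1,v_2,v_3$ cannot all be broken by removing only two vertices.

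The main obstacle is this third case of the 3-connectivity verification. One must unpack the precise meaning of well connectedness from Figure \ref{wellconnected} and trace through the contractions in $C_1$ carefully enough to see that no pair of vertices in $V(G')$ separates the new $S_2$ from the rest of $G'$. Once that is in hand the corollary follows cleanly from Lemma \ref{fan} with no further combinatorial work.
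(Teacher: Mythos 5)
Your approach is the same as the paper's: the paper dispatches this corollary in one line as an immediate consequence of Lemma \ref{fan} applied to the minor in which $C_1$ is reduced to a well connected $S_2$, which is exactly your $G'$. The one step you leave open --- 3-connectivity of $G'$ --- can be avoided entirely by applying the \emph{proof} of Lemma \ref{fan} rather than its statement: invoke Menger (Corollary \ref{mengerscor}) in $G$ itself, which is 3-connected by hypothesis, to get the three disjoint paths from the offending cycle or external vertices to $\{v_1,v_2,v_3\}$, and combine these with the well connected $S_2$ minor of $C_1$ to build the $H$ minor directly in $G$; the conclusion is then ``fan structure and every external cycle through $v_1$, or else an $H$ minor,'' which is all the corollary claims.
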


This is an immediate consequence of the $H$ minor that will otherwise appear.

\begin{definition} Suppose a graph $G$ has a full component with a well connected $S_2$ minor. We call $G$ a \emph{fan graph}, and the vertices (edges) external to this full component \emph{fan vertices} (\emph{fan edges}). The set of fan edges and vertices create the \emph{fan}. \end{definition}

\begin{theorem} Let $G$ be a simple planar 3-connected graph with a non-splitting 5-configuration. Then, $G$ must have an $H$, $O$, or $C$ minor. \end{theorem}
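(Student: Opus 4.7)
The plan is to prove the contrapositive: assume $G$ is simple, planar, 3-connected, and $H$-, $O$-, and $C$-free, and show that every 5-configuration of $G$ splits. First I would dispose of the 4-connected case. By Theorem \ref{notfour}, a 4-connected graph either contains an $O$ minor or is isomorphic to $C^2_{2k+1}$ for some $k \geq 2$; the latter is nonplanar, so under our hypotheses $\kappa(G) \leq 3$. Combined with the earlier restrictions ruling out cut vertices and two-vertex cuts in the 3-connected context, this forces $\kappa(G) = 3$, and we may fix a three vertex cut $K = \{v_1, v_2, v_3\}$.

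Next I suppose for contradiction that some 5-configuration $S$ of $G$ is non-splitting. Corollary \ref{onlys1} handles the case where neither full component of $K$ has a well connected $S_2$ minor, so I may assume some full component $C_1$ does carry a well connected $S_2$ minor. By Corollary \ref{fanminor}, $G$ is then a fan graph: the vertices external to $C_1$ form a fan anchored at a single cut vertex, which without loss of generality is $v_1$. Using Lemma \ref{fan} I would argue that the second full component $C_2$ cannot also harbour a well connected $S_2$ minor, since two fan structures attached along the same cut would force either a cycle external to $C_1$ avoiding $v_1$ (hence an $H$ minor) or a $C$ or $K_{3,3}$ minor through the planar embedding. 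Thus $C_2$ is $S_2$-free in the well-connected sense, and Corollary \ref{atmost2} gives $|S \cap E(C_2)| \leq 2$, so $|S \cap E(C_1)| \geq 3$.

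I would then iterate inside $C_1$. Theorem \ref{s2creation} yields a two-vertex cut $\{v_1, w\}$ inside $C_1$ separating $v_2$ from $v_3$, producing nested full components. For each edge of $S$ lying in $C_1$, I would apply Corollary \ref{trickythreecut}: deleting or contracting such an edge typically creates a two-vertex cut that partitions the remaining four edges of $S$ into two pairs, and Proposition \ref{prop1} then forces a Dodgson to vanish. The residual cases (edges on fan edges of the $S_2$ triangulation, edges clustered near a single vertex) are eliminated by Proposition \ref{cycle} — a triangle or a vertex-of-degree-three cycle in $S$ immediately splits — or by further iteration of Theorem \ref{s2creation} down to a trivial full component. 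This contradicts $S$ being non-splitting and completes the proof.

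The main obstacle is the bookkeeping in this case analysis: tracking which of the five edges of $S$ land inside the nested full components induced by $K$, by $\{v_1, w\}$, and by the fan, and showing that every distribution reduces either to a Corollary \ref{trickythreecut} application or a Proposition \ref{cycle} obstruction. The fan rigidity from Corollary \ref{fanminor} is what keeps the recursion finite, but verifying the small terminal configurations carefully — especially when an edge of $S$ coincides with $\{v_1, v_2\}$, $\{v_1, v_3\}$, or a fan edge — is the delicate part.
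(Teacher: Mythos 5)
Your setup matches the paper's: invoke Theorem \ref{notfour} and Proposition \ref{small} to get a three vertex cut, dispose of the case where neither full component has a well connected $S_2$ minor via Corollary \ref{onlys1}, and then use Corollary \ref{fanminor} to conclude $G$ is a fan graph. The divergence, and the gap, is in the endgame. Your plan is to show $|S \cap E(C_2)| \leq 2$, hence $|S \cap E(C_1)| \geq 3$, and then do edge-distribution casework inside $C_1$ using Theorem \ref{s2creation}, Corollary \ref{trickythreecut}, and Proposition \ref{cycle}. But the splitting machinery you are leaning on (Theorem \ref{canthave3} and Corollary \ref{atmost2}) is proven only for full components with \emph{no} well connected $S_2$ minor: its iteration depends on the contrapositive of Theorem \ref{s2creation} producing a cut vertex adjacent to a single internal vertex, which is exactly what fails in $C_1$. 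Inside $C_1$ you instead get a genuine two vertex cut $\{v_1,w\}$ of $C_1$ (not of $G$), and there is no argument given for why an arbitrary placement of three or more edges of $S$ there yields an edge whose deletion or contraction creates a two vertex cut of $G$ splitting the remaining edges $2$--$2$; the word ``typically'' is carrying the entire burden. The paper never proves that a full component \emph{with} a well connected $S_2$ minor and three edges of $S$ forces a split, and your route would require exactly that.

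The paper sidesteps this by a second application of Corollary \ref{onlys1} to a \emph{different} three vertex cut. Taking the minimal full component $C_1$ with a well connected $S_2$ minor, Theorem \ref{s2creation} gives the internal two vertex cut $\{v_1,v\}$, and together with a fan vertex $f$ this yields a new three vertex cut $V=\{v,v_1,f\}$ of $G$ that straddles $C_1$ and the fan. If neither full component of $V$ has a well connected $S_2$ minor, Corollary \ref{onlys1} applied to $V$ shows $G$ splits; if one does, the cycles forced outside that component (through $f,v_1,w_1,w_2$ and through $v,w_1,w_2$) violate Corollary \ref{fanminor} and produce an $H$ minor, a contradiction. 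You should either adopt this second-cut argument or supply a genuinely new proof that a well connected $S_2$ component cannot host three edges of a non-splitting 5-configuration; as written, your proposal does not close.
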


\begin{proof} Since $G$ is assumed to be 3-connected, planar, and $O$-free, by Theorem \ref{notfour} it must have a three vertex cut. If there is a three vertex cut such that neither full component contains a well connected $S_2$ minor, then by Corollary \ref{onlys1} the graph will split. Suppose then that a full component has a well connected $S_2$ minor. Take the three vertex cut that creates the smallest full component $C_1$ containing a well connected $S_2$ minor. Then, each of $v_1$, $v_2$, and $v_3$ must be incident with two internal vertices of $C_1$, as otherwise this contradicts the minimality condition of this three vertex cut. By Corollary \ref{fanminor}, $G$ is a fan graph. Label $G$ as in Figure \ref{finalg}.

\begin{figure}[h]
  \centering
    \includegraphics[scale=1.0]{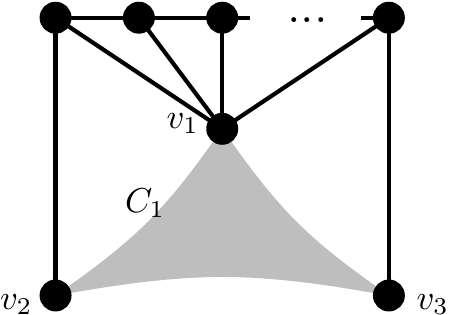}
  \caption{The known vertices of $G$. Note that the fan may have only a single vertex.}
\label{finalg}
\end{figure}

By Theorem \ref{s2creation}, there is a two vertex cut in $C_1$. Let $f$ be an arbitrary fan vertex. Label this as in Figure \ref{finalgbonus}. Note that there is a three vertex cut $V = \{v,v_1,f\}$ in $G$.

\begin{figure}[h]
  \centering
    \includegraphics[scale=1.0]{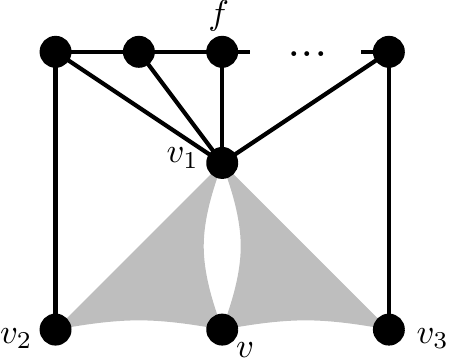}
  \caption{Graph $G$, with known vertices labelled.}
\label{finalgbonus}
\end{figure}

Again, if neither full component induced by vertex cut set $V$ contains a well connected $S_2$ minor then the graph splits by Corollary \ref{onlys1}. Suppose then that one full component has a well connected $S_2$ minor. Consider the full component created by vertex cut set $V$ that does not contain the well connected $S_2$ minor. By construction, it must contain a minor as in Figure \ref{finalgnewsplit}.

\begin{figure}[h]
  \centering
    \includegraphics[scale=1.0]{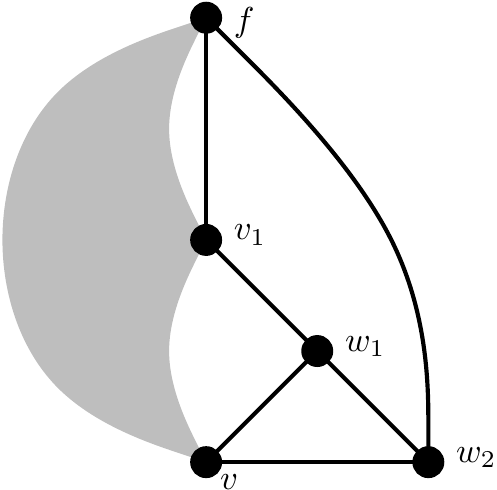}
  \caption{Cycles outside of a full component assumed to have a well connected $S_2$ minor.}
\label{finalgnewsplit}
\end{figure}

Specifically, vertex sets $V_1 = \{f,v_1,w_1,w_2\}$ and $V_2 = \{v,w_1,w_2\}$ induce cycles. Note that $V \cap V_1 = \{f,v_1\}$ and $V \cap V_2 = \{v\}$. As a minor of our graph $G$, these cycles necessarily exist in $G$, connected to $f$, $v$, and $v_1$ as in this minor. No matter how the $S_2$ subgraph appears, then, there is a cycle that will create an $H$ minor by Corollary \ref{fanminor}. Hence, no 3-connected, simple, planar, $H$-, $O$-, and $C$-free graph has a non-splitting 5-configuration.  \end{proof}

It follows from this theorem that a simple, 3-connected, planar, $H$-, $O$-, and $C$-free graph must have a three vertex cut such that neither full component created by this cut has a well connected $S_2$ minor. Specifically, if a three vertex cut creates a full component that does have a well connected $S_2$ minor, we may use it to find a three vertex cut that does not create a full component with a well connected $S_2$ minor. It follows from Theorem \ref{s2creation} that, for either connected component created by this cut, there is a vertex in this cut set incident with precisely one vertex the connected component. As in Theorem \ref{canthave3}, we may replace the vertex in the three vertex cut with this vertex in the connected component, each time producing a new three vertex cut in our original graph until there is a single vertex in the connected component. We may use this technique to characterize the 3-connected simple graphs that split.

\begin{corollary} Suppose $G$ is a 3-connected simple graph that splits and $|E(G)| = n$. There is an ordering on the edges $e_1, e_2, ... , e_n$ such that, defining $G_i$ to be the graph induced by edges $\{e_1, ... , e_i\}$, $G_i$ is a subgraph of $G_{i+1}$ for $1 \leq i \leq n-1$ and there are precisely three vertices in the intersection of $G_i$ and $G \cut G_i$ for $3 \leq i \leq n-3$. \end{corollary}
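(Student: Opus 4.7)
The plan is to build the ordering by iteratively peeling off ``leaf'' three-cuts, each contributing three consecutive edges. Since $G$ is 3-connected and splits, every minor of $G$ splits (Theorem~\ref{minorclosedproperty}), so $G$ cannot contain any of $K_5$, $K_{3,3}$, $H$, $O$, or $C$ as a minor; Wagner's theorem then gives that $G$ is planar. Combining the preceding theorem with the iteration of Theorem~\ref{s2creation} described in the paragraph just above the corollary, $G$ admits a three-vertex cut $\{a,b,c\}$ whose interior consists of a single vertex $d$. Since $G$ is simple and 3-connected, $d$ has degree exactly $3$ with neighbour set $\{a,b,c\}$.

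I would set $e_1 = da$, $e_2 = db$, $e_3 = dc$. Then $V(G_3) = \{a,b,c,d\}$, but $d$ has no edge in $G \cut G_3$, while each of $a$, $b$, $c$ remains incident to edges crossing the original three-cut, so $V(G_3) \cap V(G \cut G_3) = \{a,b,c\}$ has size $3$, as required. The proof is by induction on $|E(G)|$; the base case is $G = K_4$, the unique smallest simple 3-connected graph, for which any ordering that places the three edges incident to one vertex first makes the condition at $i = 3$ trivially hold. By symmetry (applying the same peeling on the opposite side of the cut), the last three edges $e_{n-2}, e_{n-1}, e_n$ can likewise be taken to form a leaf three-cut, giving the condition at $i = n-3$.

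For the inductive step I would apply the same peeling argument to the minor $G^* = G - d$; as a minor of $G$ it still splits and avoids the forbidden minors, and appending a valid ordering for $G^*$ after $e_1, e_2, e_3$ would yield the desired ordering for $G$. The main obstacle is that $G^*$ need not itself be 3-connected, so the preceding theorem cannot be invoked directly on $G^*$ to extract another leaf three-cut. To cope with this, I would strengthen the inductive hypothesis to carry a distinguished boundary set $B \subseteq V(G^*)$ of size three (namely $\{a,b,c\}$) together with the assumption that the graph obtained by adjoining to $G^*$ a new vertex adjacent to each element of $B$ is 3-connected, simple, and free of the forbidden minors, a structure that holds here by construction since this augmented graph is precisely $G$. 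The hardest step is then verifying that the boundary remains exactly three across the join between the initial segment $e_1,e_2,e_3$ and the ordering produced by induction: this reduces to a short case analysis showing that each subsequent edge either connects two current boundary vertices, both of which retain further edges in $G \cut G_i$, or swaps a boundary vertex whose last remaining edge is being added for a new vertex with further edges still to come, so the boundary neither grows nor shrinks throughout the range $3 \le i \le n - 3$.
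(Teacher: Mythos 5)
Your proposal follows essentially the same route as the paper: both rest on the iterated three-vertex-cut decomposition coming from Theorem~\ref{s2creation} (terminating in a single-vertex interior), and both order the edges along that nested decomposition so that each prefix and its complement sit inside complementary full components; your leaf-peeling induction with a size-three boundary invariant is a repackaging of the paper's explicit construction, which runs from the central cut outward to the two terminal $K_{1,3}$'s. The point you flag as the hardest (that the boundary stays at exactly three vertices at every intermediate $i$) is treated no more explicitly in the paper's own proof, so your sketch is not missing anything relative to it.
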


\begin{proof} As we know such a cut exists, as mentioned prior, consider a three vertex cut in $G$ such that neither full component created by this cut has a well connected $S_2$ minor. Label these full components $C_1$ and $C_2$. As previously stated, we may decompose these to $K_{1,3}$ by finding three vertex cuts in the original graph that contain two of the vertices in the previous two vertex cut. We create an ordered list of the edges using this decomposition. Specifically, suppose vertices $v_1$, $v_2$, and $v_3$ are the vertices in the cut set at some stage of this decomposition, and $v_1$ is incident with only one vertex in the connected component being considered (such a vertex exists by Theorem \ref{s2creation}). Add edges incident with two of these vertices (that are not already in the list) to the queue in any order, then add edge incident with $v_1$ and a vertex in the connected component, before moving in to the smaller three vertex cut. When you reach the last step of the decomposition, the $K_{1,3}$, add the remaining three edges in any order. This creates an ordering on the edges of $C_1$ and $C_2$. Reversing the order for edges in $E(C_1)$ and concatenating the ordered edges in $E(C_2) - E(C_1)$ produces an ordering of the edges in $E(G)$.

The corollary follows immediately from this ordering. By construction, for $3 \leq i \leq n-3$, both $G_i$ and $G \cut G_i$ are subgraphs of full components created by a three vertex cut, and as such there are precisely three vertices in the intersection of these graphs.\end{proof}


%
%

\chapter{Conclusions and Open Problems}
\label{last}

\section{The Results}

The main result of this thesis is that, if a simple 3-connected graph splits, it must be $H$-, $O$-, $C$-, $K_5$-, and $K_{3,3}$-free. This was achieved by finding restrictions to the distribution of 5-configurations across vertex cut sets, and specifically the number of edges in a non-splitting 5-configuration that may appear in full components.

Recall that the method of denominator reduction given in Chapter \ref{introduction} is used only for primitive divergent graphs. It is important to show then that there are primitive divergent $H$-, $O$-, $C$-, $K_5$-, and $K_{3,3}$-free primitive graphs. Consider the graph in Figure \ref{zigzagarb}. We call this a \emph{zigzag graph}. This is an infinite class of graphs, the smallest being $K_4$, drawn in Figure \ref{zigzagsmall} to emphasize the appearance.

\begin{figure}[h]
  \centering
      \includegraphics[scale=1.0]{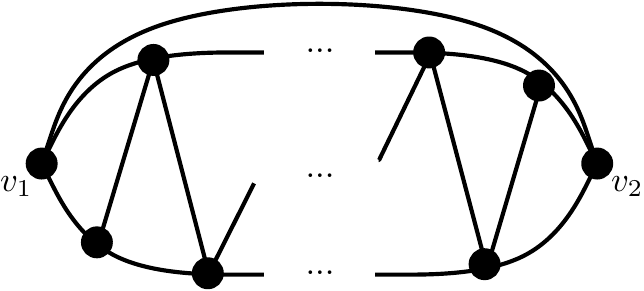}
  \caption{A zigzag graph of arbitrary size. Note that zigzag graphs with order of either parity is allowed.}
\label{zigzagarb}
\end{figure}

\begin{figure}[h]
  \centering
      \includegraphics[scale=1.0]{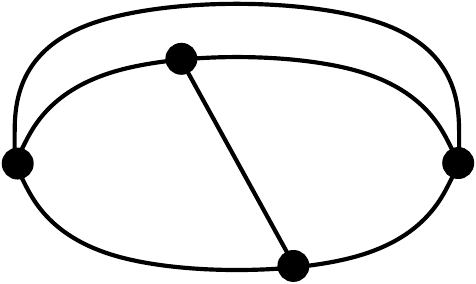}
  \caption{The smallest possible zigzag graph.}
\label{zigzagsmall}
\end{figure}

From Proposition \ref{small}, $K_4$ splits. Consider now a zigzag graph $G$ such that $|V(G)| > 4$. Labelled as in Figure \ref{zigzagarb}, every three vertex cut must contain precisely one of $v_1$ or $v_2$. Then, there is no three vertex cut that produces a connected component $G_1$ and full component $C_1$ such that each vertex in $V(C_1) - V(G_1)$ is incident with at least two vertices in $V(G_1)$. By Theorem \ref{s2creation} a zigzag graph cannot have a well connected $S_2$ minor, and hence by Corollary \ref{onlys1} $G$ splits.

Showing that zigzag graphs are primitive divergent is more involved. Recall that for a primitive divergent graph $G$ and any proper subgraph $\gamma$ with at least one edge in every connected component must have $|E(\gamma)| > 2h_\gamma$ where $h_\gamma = |E(\gamma)| - |V(\gamma)| + k_\gamma$. Simplifying, \begin{align*} |E(\gamma)| &> 2(|E(\gamma)| - |V(\gamma)| + k_\gamma) \\  -|E(\gamma)| &>  - 2|V(\gamma)| + 2k_\gamma \\ |E(\gamma)| &< 2|V(\gamma)| - 2k_\gamma. \end{align*}

Note that in general we need to consider only connected subgraphs $\gamma$. If this inequality does not hold for a disconnected subgraph, then it does not hold for at least one of the connected components in this subgraph.

\begin{prop} \label{zigzag} Zigzag graphs are primitive divergent. \end{prop}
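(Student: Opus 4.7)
The plan is to verify the two conditions in the definition of primitive divergence directly. Since $G$ is connected, $h_G = |E(G)| - |V(G)| + 1$, so $|E(G)| = 2h_G$ is equivalent to $|E(G)| = 2|V(G)| - 2$. First I would confirm this equality by a direct count from the zigzag construction in Figure~\ref{zigzagarb}, e.g.\ by inducting on the length of the zigzag: adding one ``zigzag step'' contributes one new vertex and exactly two new edges, so the relation $|E| = 2|V| - 2$ is preserved from the base case $K_4$.

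Next I would establish the strict inequality for every proper connected subgraph $\gamma$ with at least one edge in each component, namely $|E(\gamma)| \leq 2|V(\gamma)| - 3$ (the integer form of $|E(\gamma)| < 2|V(\gamma)| - 2$). The argument splits into two cases. If $V(\gamma) = V(G)$ but $E(\gamma) \subsetneq E(G)$, then $|E(\gamma)| \leq |E(G)| - 1 = 2|V(G)| - 3 = 2|V(\gamma)| - 3$, so we are done. Otherwise $V(\gamma) \subsetneq V(G)$, and since $\gamma$ is a subgraph of the induced subgraph $G[V(\gamma)]$, it suffices to bound $|E(G[V(\gamma)])|$.

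For the induced-subgraph bound, let $\bar S = V(G) \setminus V(\gamma)$ and $k = |\bar S| \geq 1$. A handshake rearrangement gives
\[
|E(G[V(\gamma)])| \;=\; |E(G)| - e_G(\bar S, V(\gamma)) - |E(G[\bar S])|,
\]
so the required bound $|E(G[V(\gamma)])| \leq 2|V(\gamma)| - 3$ is equivalent to
\[
e_G(\bar S, V(\gamma)) + |E(G[\bar S])| \;\geq\; 2k+1,
\]
i.e.\ at least $2k+1$ edges of $G$ are incident to $\bar S$. Using the fact that every vertex of a zigzag graph has degree at least $3$, the sum of degrees over $\bar S$ is at least $3k$, which counts each internal edge twice and each boundary edge once, giving $2|E(G[\bar S])| + e_G(\bar S, V(\gamma)) \geq 3k$. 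Combined with an upper bound on $|E(G[\bar S])|$ this yields the needed inequality.

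The main obstacle is controlling $|E(G[\bar S])|$ in the dense case. When $G[\bar S]$ is a forest, $|E(G[\bar S])| \leq k-1$, and subtracting this from $2|E(G[\bar S])| + e_G(\bar S, V(\gamma)) \geq 3k$ immediately gives $e_G(\bar S, V(\gamma)) + |E(G[\bar S])| \geq 2k+1$. When $\bar S$ contains cycles I would exploit the specific structure of the zigzag: every cycle of $G$ passes through at least one of the two ``endpoint'' vertices $v_1, v_2$ (because without them the graph is a path-like chain), which tightly restricts how many internal edges $\bar S$ can support while still leaving $v_1$ or $v_2$ well-connected to $V(\gamma)$. I would finish by a short case analysis on whether $\bar S$ contains neither, one, or both of $v_1, v_2$, verifying the inequality in each case from the explicit degree pattern of the zigzag.
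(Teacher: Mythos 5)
Your verification of $|E(G)| = 2h_G$ and your reduction of the subgraph condition to the inequality $e_G(\bar S, V(\gamma)) + |E(G[\bar S])| \geq 2k+1$ are fine, and the forest case is handled correctly. The gap is in the remaining case, which is exactly where the content of the proposition lies. The structural claim you invoke there --- that every cycle of a zigzag graph passes through one of the two end vertices $v_1, v_2$ --- is false. Deleting $v_1$ and $v_2$ removes at most $\deg(v_1)+\deg(v_2) \leq 8$ edges, leaving at least $(2|V(G)|-2)-8$ edges on $|V(G)|-2$ vertices; for $|V(G)| \geq 8$ this exceeds $|V(G)|-3$, so $G - \{v_1,v_2\}$ cannot be a forest. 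Concretely, the zigzag minus its two ends is (essentially) the square of a path, which is full of triangles, and any triangle on three consecutive internal vertices is a cycle avoiding both $v_1$ and $v_2$. So $\bar S$ can induce cycles without containing either endpoint --- for instance $\bar S$ equal to an internal triangle, where $|E(G[\bar S])| = k = 3$ rather than $\leq k-1$ --- and your planned case analysis on which of $v_1, v_2$ lie in $\bar S$ does not address this.

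What is actually needed in the dense case is the fact that $G$ has only \emph{four} vertices of degree three, not merely minimum degree three: writing $d_3(\bar S)$ for the number of degree-three vertices of $G$ lying in $\bar S$, the exact identity $e_G(\bar S, V(\gamma)) + 2|E(G[\bar S])| = 4k - d_3(\bar S)$ turns your target into $|E(G[\bar S])| \leq 2k - d_3(\bar S) - 1$, which must then be verified against the explicit zigzag structure (e.g.\ for $\bar S$ an internal triangle one has $d_3(\bar S)\le 1$ and $|E(G[\bar S])| = 3 \le 2\cdot 3 - 1 - 1$). This is in essence what the paper does, phrased from the other side: it notes that a violating induced subgraph $\gamma$ would have degree sum at least $4|V(\gamma)|-4$, i.e.\ would be within two added edges of being $4$-regular, which a proper connected subgraph of the zigzag cannot be. Until the cyclic case is argued from a correct structural property of the zigzag, the proof is incomplete.
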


\begin{proof} Suppose $G$ is a zigzag graph and $G$ has $n$ vertices of degree four. By construction, $G$ has four vertices with degree three and is connected. Then, $|E(G)| = 2n +6$, and $h_G = (2n+6) - (n+4) +1 = n+3$. Thus, $|E(G)| = 2h_G$. 

Suppose that $\gamma$ is a connected non-trivial proper subgraph of $G$. We may assume that $\gamma$ is induced by its vertex set, as if the inequality holds in this case it will hold for subgraphs with the same vertex set but fewer edges. By construction, this inequality holds so long as the sum of the degrees of vertices in $V(\gamma)$ is less than $4|V(\gamma)| - 4$. Given that the maximum vertex degree in $G$ is four, this would mean that adding at most two edges (possibly loops or parallel edges) would make $\gamma$ 4-regular. Since $\gamma$ must be a proper subgraph, this is impossible.

Thus, $G$ is a primitive divergent graph.  \end{proof}

It follows from Proposition \ref{zigzag} that there are an infinite number of graphs that are both primitive divergent and split. There are graphs outside of the zigzag family that are primitive divergent and split, so this is not a complete set.

The complete set of forbidden minors for splitting graphs remains unsolved. Specifically, we have not necessarily found all minor-minimal non-simple or 2-connected graphs with non-splitting 5-configurations. While a number of restrictions were found in Chapter \ref{cuts} to the appearance of minor-minimal non-splitting graph with a two vertex cut, these do not provide a clear method of fully resolving this case.

These results, however, are satisfactory for approaches to partial Feynman integrals using five-invariants. Recall that Propositions \ref{52}, \ref{129}, and \ref{130}, from which the value of splitting graphs is derived, are all stated in terms of primitive divergent graphs.

\begin{prop} \label{noparalleledges} If a graph $G$ has parallel edges, then it is not primitive divergent. \end{prop}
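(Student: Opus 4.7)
The plan is to exhibit a specific proper subgraph that witnesses the failure of the strict inequality $|E(g)| > 2h_g$ built into the definition of primitive divergent. Suppose $G$ contains parallel edges $e_1, e_2$ sharing endpoints $u, v$. I would take $g$ to be the subgraph with vertex set $\{u,v\}$ and edge set $\{e_1, e_2\}$. This $g$ is connected and has at least one edge in each connected component, so it is a legitimate candidate for the inequality check in the primitive divergence definition.

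Next I would simply compute the loop number of this $g$: $|E(g)| = 2$, $|V(g)| = 2$, $k_g = 1$, so $h_g = |E(g)| - |V(g)| + k_g = 2 - 2 + 1 = 1$. Therefore $|E(g)| = 2 = 2 h_g$, which directly contradicts the strict inequality $|E(g)| > 2 h_g$ required of every proper subgraph of a primitive divergent graph. Hence $G$ cannot be primitive divergent.

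The only thing that needs to be verified is that $g$ is genuinely a \emph{proper} subgraph of $G$. This fails only in the degenerate case where $G$ consists of exactly two vertices joined by exactly two parallel edges, i.e.\ $G = g$. But in that case $|V(G)| = 2$, $h_G = 1$, and one may dismiss this situation as trivial in the context of Feynman integrals (such a graph cannot support any 5-configuration and the integrand degenerates); alternatively one can invoke that $G$ must have $|E(G)| = 2 h_G \geq 6$ in the primitive divergent cases of interest (or, more sharply, any graph treated in this thesis has at least three edges), so $g$ is automatically proper. I would include a one-line remark to this effect.

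The argument is short and no step is really a substantive obstacle; the only thing to get right is the bookkeeping in the loop-number computation and the brief sanity check that $g$ is proper. The main conceptual point is just that parallel edges form a ``2-cycle'' of loop number one with only two edges, which sits exactly at the threshold $|E| = 2h$ and is therefore forbidden as a strict sub-object.
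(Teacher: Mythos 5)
Your proposal is correct and is essentially identical to the paper's own proof: both take the two-edge subgraph on the shared endpoints, compute $h_\gamma = 1$, and observe that $|E(\gamma)| = 2h_\gamma$ violates the strict inequality. Your extra remark verifying that the subgraph is proper is a minor refinement the paper omits, not a different approach.
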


\begin{proof} Suppose $e_1$ and $e_2$ are a pair of parallel edges in $G$. Then, the subgraph of $G$ induced by edges $e_1$ and $e_2$, call it $\gamma$, has $|V(\gamma)| = 2$, $k_\gamma = 1$, and $|E(\gamma)| = 2$. Then, $$h_\gamma = |E(\gamma)| - |V(\gamma)| + k_\gamma = 1.$$ Hence, $|E(\gamma)| = 2h_\gamma$, and hence graph $G$ is not primitive divergent. \end{proof}

\begin{prop} \label{nodegreetwo} If a graph $G$ has a vertex of degree two, then it is not primitive divergent. \end{prop}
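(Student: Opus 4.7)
The plan is to follow the template of Proposition \ref{noparalleledges}: exhibit a proper subgraph $\gamma$ of $G$, with at least one edge in each connected component, for which $|E(\gamma)| \le 2h_\gamma$, contradicting the strict inequality that primitive divergence demands.

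First I dispatch degenerate cases. Let $v$ be a vertex of degree two with incident edges $e_1 = vu_1$ and $e_2 = vu_2$. If $u_1 = u_2$, then either $e_1, e_2$ are parallel (whence Proposition \ref{noparalleledges} finishes the argument immediately), or one of them is a loop at $v$, in which case the subgraph consisting of that single loop has $|E| = 1$ and $h = 1$, already violating primitive divergence. So I may assume $u_1 \ne u_2$.

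The main move is to take $\gamma := G \setminus v$, the subgraph obtained by deleting $v$ together with its two incident edges. Using $|V(\gamma)| = |V(G)| - 1$ and $|E(\gamma)| = |E(G)| - 2$, together with the primitive-divergence identity $|E(G)| = 2h_G$ for $G$ connected, a short count gives $|E(\gamma)| - 2h_\gamma = 2(1 - k_\gamma) \le 0$. Thus $\gamma$ violates the strict inequality required of proper subgraphs, provided $\gamma$ meets the hypothesis of having at least one edge in each connected component.

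The only subtlety is that $\gamma$ may acquire isolated vertices, and this occurs precisely when $u_1$ or $u_2$ had degree one in $G$. Since stripping an isolated vertex drops both $|V|$ and $k$ by one, the loop number $h = |E| - |V| + k$ is preserved; so removing the (at most two) isolated vertices from $\gamma$ produces a valid $\gamma^*$ satisfying the same inequality. This fails only if $\gamma^*$ is empty, which forces $|E(G)| \le 2$, and a quick inspection of graphs on $\le 3$ vertices with a degree-two vertex shows that no such graph can satisfy $|E(G)| = 2h_G$ to begin with. The only real obstacle is this bookkeeping around isolated vertices and the tiny edge-count regime, but the invariance of $h$ under removal of isolated vertices makes the argument go through cleanly.
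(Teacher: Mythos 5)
Your proof is correct and uses the same central subgraph as the paper: delete the degree-two vertex $v$, so the loop number drops by one while the edge count drops by two, turning the required strict inequality $|E(\gamma)| > 2h_\gamma$ into an equality or worse. The only difference is bookkeeping: the paper first shows that a bridge already rules out primitive divergence and uses that to guarantee $G - v$ stays connected with no stranded vertices, whereas you absorb possible disconnection directly into the identity $|E(\gamma)| - 2h_\gamma = 2(1 - k_\gamma) \le 0$ and strip isolated vertices by hand via the invariance of $h$; both routes are sound.
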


\begin{proof} First suppose that there is an edge $e \in E(G)$ that is a  bridge. Then, the subgraph $\gamma$ of $G$ induced by edge set $E(G) - \{e\}$ has $|V(G)|$ vertices, $|E(G)|-1$ edges, and $k_G +1$ connected components. Thus, $h_\gamma = h_G$. Assuming that $|E(G)| = 2h_G$, we have $$|E(\gamma)| = |E(G)|-1 = 2h_G - 1 = 2h_\gamma - 1.$$ Then, $|E(\gamma)| < h_\gamma$, and as such a graph with a bridge cannot be primitive divergent.

Suppose $v \in V(G)$ has degree two. From the previous paragraph, we may assume that the graph $\gamma$ induced by $V(G)-\{v\}$ has $|V(G)| -1$ vertices, $k_G$ connected components, and $|E(G)| -2$ edges. Then, \begin{align*} h_\gamma &= (|E(G)| -2) - (|V(G)| -1) + k_G \\ &= (|E(G)| -|V(G)| + k_G) -1 \\ &= h_G -1.\end{align*} Assuming then that $|E(G)| = 2h_G$, we find that $$|E(\gamma)| = |E(G)| -2 = 2(h_G-1) = 2h_\gamma,$$ and hence $G$ is not primitive divergent. \end{proof}

Note in particular that Propositions \ref{noparalleledges} and \ref{nodegreetwo} are the simplest possible cases in which subgraphs can fail the checks for primitive divergence. In the case of a graph with parallel edges, a subgraph induced by two edges fails the inequality check. In a graph with a vertex of degree two, the graph induced by all vertices except one fails the inequality check.

By Corollary \ref{howitlooks}, we know that any minor-minimal non-splitting graph with a two vertex cut has either a pair of parallel edges or a vertex of degree two. It follows from Propositions \ref{noparalleledges} and \ref{nodegreetwo} that any further forbidden minors will not be primitive divergent. Further, translating Theorem 2.10 in \cite{schnetz} into more graph theoretic terms, we may split a graph $G$ into two graphs $G_1$ and $G_2$ as in Figure \ref{splittwocut}, and the residue of the Feynman integral of $G$ is the product of the residues of $G_1$ and $G_2$. As such, we may split graphs with two vertex cuts into smaller graphs until all graphs considered are 3-connected or trivial, and as such we need never calculate residues for graphs with two vertex cuts.

\begin{figure}[h]
  \centering
      \includegraphics[scale=1.0]{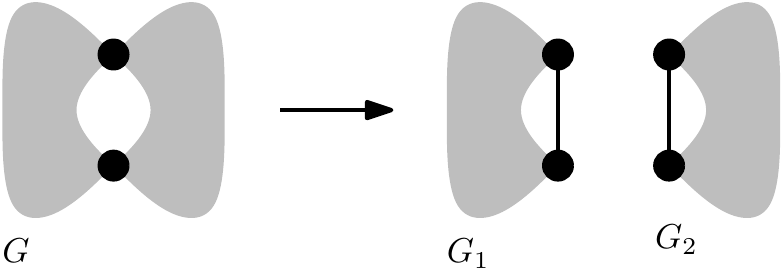}
  \caption{Splitting a graph with a two vertex cut into two graphs.}
\label{splittwocut}
\end{figure}

\section{Open Problems}

While non-simple or 2-connected graphs may not be of particular interest when considering the Feynman integral motivation to our problem, it is still interesting as a graph theoretic problem. I am hesitant to put as a conjecture that the list we have constructed of forbidden minors is complete, but I do consider finding the complete set an appealing open problem.

\begin{question} What is the complete set of obstructions for a graph splitting for all 5-configurations? \end{question}

Further open problems relate to the Feynman integral motivation, especially in light of the previous section.

\begin{question} What is the full set of graphs that are both primitive divergent and splitting? \end{question}

Recall from Proposition \ref{130} that we may calculate $(n+1)^\text{st}$ stage partial Feynman integral denominators $P_{n+1}$ from $P_n$ if $P_n$ is linear in some Schwinger coordinate. The following conjecture is of particular interest, as it would mean the graphs considered never reach a point of obstruction in this computation.

\begin{conjecture} \label{conj1} If a simple 3-connected graph $G$ has no minors isomorphic to $K_5$, $K_{3,3}$, $H$, $O$, or $C$, then there is an ordering of the edges such that, for $n \geq 5$, denominator $P_n$ is factors into terms linear in Schwinger coordinate $\alpha_{e_n}$. That is, simple 3-connected graphs with none of these forbidden minors are completely denominator reducible. \end{conjecture}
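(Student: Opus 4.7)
The plan is to combine the main result of this thesis with an inductive analysis of the higher-stage denominators, using the three-vertex-cut structure from Chapter \ref{trisub} to pin down the edge ordering.

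For the base case, since $G$ is simple, 3-connected, and free of minors $K_5, K_{3,3}, H, O, C$, the main theorem implies that $G$ splits. Hence for every 5-configuration $S$ there is a vanishing Dodgson $D \in D_S$. By Lemma \ref{reindex} we may reorder the first five edges so that $D$ is one of the four Dodgsons appearing in the definition of the 5-invariant, giving
\[
P_5 \;=\; \pm\,\Psi^{e_1e_2,e_3e_4}_{e_5}\,\Psi^{e_1e_3e_5,e_2e_4e_5}
\quad\text{or}\quad
\pm\,\Psi^{e_1e_3,e_2e_4}_{e_5}\,\Psi^{e_1e_2e_5,e_3e_4e_5}.
\]
Each Dodgson factor is linear in every Schwinger coordinate outside of $S$, so in particular $P_5$ is linear in $\alpha_{e_6}$, and Proposition \ref{130} applies to produce $P_6$.

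For the inductive step, I would use the edge ordering guaranteed by the corollary at the end of Chapter \ref{trisub}: for $3 \leq i \leq n-3$ the subgraphs $G_i$ and $G \setminus G_i$ meet in exactly three vertices. The strategy is to show by induction that each partial denominator $P_i$ admits a factorization generalizing the 5-invariant, namely a bilinear combination of Dodgson-like polynomials encoding spanning configurations of $G_i$ and of $G \setminus G_i$ glued along the three-vertex boundary. At each step the discriminant $\sqrt{D_{\alpha_{e_{i+1}}}(P_i)}$ should be identifiable with a polynomial of this same structural form on a slightly enlarged piece, and linearity in the next Schwinger coordinate would then follow from applying the splitting hypothesis, now to the three-vertex-cut minor produced at that stage. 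Since the ordering is built to traverse full components one at a time while respecting the iterated three-vertex cuts, each newly exposed subgraph would again be 3-connected and free of the five forbidden minors, preserving the inductive hypothesis.

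The main obstacle is the algebraic bookkeeping at each step: one needs an explicit ``$n$-invariant'' attached to a three-vertex separation whose factorization matches what the discriminant produces, analogous to how the 5-invariant encodes a Dodgson factorization. Producing such an invariant and proving that it factors whenever the associated minors avoid the five forbidden minors is the essential new ingredient beyond the graph-theoretic structure established in the thesis and Brown's denominator reduction framework in \cite{monster}. If this invariant can be constructed, the graph-theoretic side of the argument is already largely present: the three-vertex-cut decomposition of Chapter \ref{trisub} keeps the needed minor-freeness hypothesis intact through every stage of the reduction, so the bulk of the remaining difficulty is algebraic rather than combinatorial.
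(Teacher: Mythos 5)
This statement is labelled as a conjecture in the paper, and the paper offers no proof of it; it is posed as an open problem precisely because the step your proposal leaves unresolved is the entire mathematical content of the claim. Your base case is fine as far as it goes: splitting of the first 5-configuration (which follows from the main theorem of Chapter \ref{trisub}) lets you reorder via Lemma \ref{reindex} so that $P_5$ is a product of two Dodgson polynomials, and since each Dodgson is linear in every Schwinger coordinate, $P_5$ is a product of linear factors in $\alpha_{e_6}$ and Proposition \ref{130} yields $P_6$. But that is where the established results stop. Nothing in the splitting hypothesis, nor in the three-vertex-cut ordering from the corollary at the end of Chapter \ref{trisub}, guarantees that $P_6 = \sqrt{D_{\alpha_{e_6}}(P_5)}$ again factors into terms linear in $\alpha_{e_7}$, and the same question recurs at every subsequent stage. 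Denominator reducibility is a strictly stronger property than splitting for all 5-configurations, and the gap between the two is exactly what the conjecture asserts can be bridged.

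Your proposed inductive step is therefore a research program rather than an argument: you would need to define the ``$n$-invariant'' attached to a three-vertex separation, prove that the discriminant operation maps one such invariant to the next, and prove that minor-freeness forces each one to factor linearly --- and you explicitly acknowledge that none of these ingredients is constructed. There is also a secondary unverified claim in your sketch, namely that each piece $G_i$ of the decomposition is itself 3-connected; the corollary only guarantees that $G_i$ and $G \setminus G_i$ meet in three vertices, not that either piece inherits 3-connectivity, so even the preservation of the inductive hypothesis as you state it is not established. In short, the proposal correctly identifies where the difficulty lies but does not resolve it; the statement remains open, consistent with its status in the paper.
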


A stronger conjecture, though more doubtful, is that any ordering of the edges of the graph in Conjecture \ref{conj1} is denominator reducible.

The demonstrated method that we use to calculate denominators relies on denominators factoring into terms linear for a particular Schwinger coordinate. As such, splitting, which forces the five-invariant to be the product of two Dodgson polynomials, is a particular way in which this may occur. The following conjecture addresses this.

\begin{conjecture} If $G$ is a graph and $S \subset E(G)$, the five invariant $^5\Psi(S)$ factors into terms linear in each Schwinger coordinate if and only if $S$ splits in $G$. \end{conjecture}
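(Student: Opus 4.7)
The plan is to prove both directions of the biconditional separately, with the forward direction essentially built into the definitions and the converse being the substantive content.

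For the ``if'' direction, suppose $S = \{e_1,\ldots,e_5\}$ splits, so some Dodgson $D \in D_S$ vanishes. By Lemma \ref{reindex}, permuting the edge labels in the five-invariant formula can at most change the overall sign, and the permutations cycle through which four of the thirty Dodgsons in $D_S$ appear in the defining expression
$$^5\Psi(e_1,\ldots,e_5) = \pm \bigl(\Psi^{e_1e_2,e_3e_4}_{e_5}\Psi^{e_1e_3e_5,e_2e_4e_5} - \Psi^{e_1e_3,e_2e_4}_{e_5}\Psi^{e_1e_2e_5,e_3e_4e_5}\bigr).$$
So I would reindex so that $D$ appears as one of the four Dodgsons in this expression; the product containing $D$ then vanishes, leaving $^5\Psi$ equal, up to sign, to a single product of two Dodgsons. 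By Proposition \ref{calcdodgson}, each Dodgson is a signed sum of squarefree monomials in the Schwinger coordinates and is therefore multilinear, so $^5\Psi$ factors into two pieces each linear in every $\alpha_e$.

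For the converse, assume $^5\Psi(S)$ factors into pieces each linear in every Schwinger coordinate. Because $^5\Psi$ has degree at most two in each $\alpha_e$ with $e \notin S$ (as the difference of two products of multilinear polynomials), such a factorization must consist of precisely two multilinear factors $F$ and $G$. The strategy is to first use the Dodgson identity, the Pl\"ucker-type quadratic relation linking the four Dodgsons in the definition of $^5\Psi$, to constrain the possible structure of $F$ and $G$; then specialize $\alpha_e \to 0$ and $\alpha_e \to \infty$ for edges $e \notin S$ to pass to five-invariants of the minors $G \cut e$ and $G \contract e$, which must also factor linearly; and finally induct on $|E(G)|$, with small graphs as base cases verifiable by direct calculation. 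At each inductive step, one wants to identify $F$ and $G$ graph-theoretically with Dodgson-like polynomials of some minor, so that the inductive hypothesis yields a splitting that lifts back to $G$ via Theorem \ref{cutablecontractablegeneral}.

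The hard part will be ruling out accidental factorizations of $^5\Psi = AB - CD$ in which none of the four Dodgsons $A,B,C,D$ individually vanishes but special algebraic coincidences nonetheless produce a linear-in-each-variable factorization. A purely polynomial argument does not seem to suffice, since one can imagine bilinear differences of multilinear polynomials that factor for reasons unrelated to any factor vanishing. I expect this step to require a Matrix-Tree style combinatorial interpretation of $F$ and $G$ as spanning polynomials of certain minors of $G$, obtained by writing $^5\Psi$ as a specific block-determinant of the matrix $M_G$ from Theorem \ref{determinant} and applying rank arguments to its factorizations. If such an interpretation can be pinned down, it should force one of the four Dodgsons to vanish identically as a polynomial, yielding a splitting of $S$ in $G$ and completing the proof.
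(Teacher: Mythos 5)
The statement you set out to prove is posed in the paper as a \emph{conjecture} in the open-problems chapter; the paper offers no proof of it, so only the easy direction can be checked against anything in the text. Your ``if'' direction is correct and is precisely the argument the paper sketches right after Lemma \ref{reindex}: once some $D \in D_S$ vanishes, a permutation of the five edges places $D$ as one of the four Dodgsons in the defining expression, that product dies, and $^5\Psi$ becomes (up to sign) a single product of two Dodgsons, each multilinear by construction.

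The converse is where the entire content of the conjecture lives, and your proposal does not prove it; it is a research plan whose decisive step you explicitly leave open (``If such an interpretation can be pinned down\ldots''). Ruling out accidental multilinear factorizations of $AB-CD$ when none of $A,B,C,D$ vanishes is exactly the unsolved problem, not a technical detail. The surrounding scaffolding also has concrete problems. First, a factorization into terms linear in each coordinate need not consist of exactly two factors (the paper's own $^5\Psi_{K_5}$ is a monomial times a quintic, i.e.\ five trivial factors times one nontrivial one), so the reduction to ``two multilinear factors $F$ and $G$'' is not forced. Second, and more seriously, the proposed induction cannot close at minor-minimal non-splitting graphs: for such a graph every single-edge deletion and contraction splits, so the inductive hypothesis applied to the minors gives information that is identical to what it gives for a splitting graph, and the only remaining leverage is the assumed factorization of $^5\Psi_G$ itself --- which returns you to the unproven core. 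Theorem \ref{cutablecontractablegeneral} lets you transfer spanning trees between $G$ and its minors, but it provides no mechanism for reconstructing a vanishing Dodgson of $G$ from factorization data downstairs. As written, the proposal establishes only the implication the paper already records as known.
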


Lastly, we prove in Theorem \ref{deltayminimal} in Chapter \ref{delta-y} that if two graphs differ by a $\Delta$-Y transformation, a 5-configuration that contains no edges used in the transformation will either split or not split in both graphs. Logically, we may ask the converse.

\begin{question} \label{conj3} Let $G$ be a non-splitting graph. If $G'$ differs from $G$ by a single $\Delta$-Y transformation, is it the case that $G'$ has a non-splitting 5-configuration if and only if there is a particular non-splitting 5-configuration in $G$ that uses none of the edges in the three valent vertex or induced triangle used to produce $G'$? \end{question}


%
%

\addcontentsline{toc}{chapter}{Bibliography}

%
%

\renewcommand{\baselinestretch}{\tighttextstretch} 
\normalsize

\addcontentsline{toc}{chapter}{Index}
\typeout{Index}
\printindex

\renewcommand{\baselinestretch}{\textstretch} 
\normalsize

\end{document}